\documentclass{amsart}
\usepackage{amsmath,amssymb}
\makeatletter
\DeclareRobustCommand{\lunderset}[3][1pt]{%
  \binrel@{#3}%
  \binrel@@{%
    \vbox{%
      \offinterlineskip\m@th
      \ialign{##\hfil\cr$\displaystyle#3$\cr\noalign{\vskip#1}$\scriptstyle#2$\cr}%
    }%
  }%
}
\makeatother
\usepackage{tikz-cd}
\usepackage{amsmath}
\usepackage{amsfonts}
\usepackage{amssymb,amsthm}

\def\scal#1#2{\langle #1\bv#2 \rangle}

\def\shuffle{\mathop{_{^{\sqcup\!\sqcup}}}}

\def\ncp#1#2{#1\langle #2\rangle}
\def\ncs#1#2{#1\langle \!\langle #2\rangle \!\rangle}

\newcommand{\calB}{{\mathcal B}}
\newcommand{\calC}{{\mathcal C}}
\newcommand{\calD}{{\mathcal D}}

\newcommand{\calG}{{\mathcal G}}
\newcommand{\calH}{{\mathcal H}}

\newcommand{\calL}{{\mathcal L}}
\newcommand{\calM}{{\mathcal M}}

\newcommand{\calP}{{\mathcal P}}

\newcommand{\calS}{{\mathcal S}}

\newcommand{\calU}{{\mathcal U}}

\newcommand{\calX}{{\mathcal X}}

\newcommand{\calZ}{{\mathcal Z}}

\newcommand{\N}{{\mathbb N}}
\renewcommand{\P}{\N_{>0}}
\newcommand{\Z}{{\mathbb Z}}
\newcommand{\Q}{{\mathbb Q}}
\newcommand{\R}{{\mathbb R}}
\newcommand{\C}{{\mathbb C}}

\newcommand{\Sum}[2]{\displaystyle{\sum_{#1}^{#2}}}

\def\pol#1{\langle #1 \rangle}
\def\Lyn{{\mathcal Lyn}}

\def\CX{\C \langle X \rangle}
\def\CY{\C \langle Y \rangle}
\def\abs#1{|#1|}

 \def\shuffle{\mathop{_{^{\sqcup\!\sqcup}}}} 

\gdef\stuffle{\;%
  \setlength{\unitlength}{0.0125cm}%
  \begin{picture}(20,10)(220,580) 
  \thinlines 
  \put(220,592){\line( 0,-1){ 10}} 
  \put(220,582){\line( 1, 0){ 20}} 
  \put(240,582){\line( 0, 1){ 10}} 
  \put(230,592){\line( 0,-1){ 10}} 
  \put(225,587){\line( 1, 0){ 10}} 
  \end{picture}\; 
}

\newtheorem{corollary}{Corollary}
\newtheorem{proposition}{Proposition}
\newtheorem{theorem}{Theorem}
\newtheorem{lemma}{Lemma}
\newtheorem{definition}{Definition}

\newtheorem{example}{Example}

\newtheorem{remark}{Remark}

\newcommand{\Li}{\operatorname{Li}}

\def\L{\mathrm{L}}
\def\H{\mathrm{H}}

\def\P{\mathrm{P}}

\def\deg{\mathrm{deg}}

\newcommand{\poly}[2]{#1 \langle #2 \rangle}

\def\QX{\poly{\Q}{X}}

\def\CX{\poly{\C}{X}}

\def\CY{\poly{\C}{Y}}

\newcommand{\serie}[2]{#1 \langle \! \langle #2 \rangle \! \rangle}

\def\pol#1{\langle #1 \rangle}

\def\AX{A \langle X \rangle}

\newcommand*{\longtwoheadrightarrow}{\ensuremath{\joinrel\relbar\joinrel\twoheadrightarrow}}

\def\AX{\ncp{A}{X}}

\def\AY{\ncp{A}{Y}}

\def\CX{\ncp{\C}{X}}

\def\Sum{\displaystyle\sum}

\def\path{\rightsquigarrow}

\def\bv{\mid}

\def\abs#1{\bv\!#1\!\bv}

\def\trr{\triangleright}
\def\trl{\triangleleft}
\def\conc{\tt{conc}} 
\def\span{\mathrm{span}} 
\def\supp{\mathrm{supp}} 
\def\rd{\triangleright}
\def\rg{\triangleleft}
\def\deg{\mathop\mathrm{deg}\nolimits}

\def\supp{\mathop\mathrm{supp}\nolimits}

\def\dbinom#1#2{{#1\choose#2}}
\def\binom#1#2{{#1\choose#2}}

\gdef\ministuffle{{\scriptstyle \stuffle}}

\begin{document}
\title[Representative Functions on Free Monoids]{Various Bialgebras of Representative Functions on Free Monoids}
\author{V. Hoang Ngoc Minh}
\address{Universit\'e de Lille, 1 Place D\'eliot, 59024 Lille, France.}
\curraddr{Universit\'e de Lille, 1 Place D\'eliot, 59024 Lille, France.}
\email{vincel.hoang-ngoc-minh@univ-lille.fr}
\thanks{}

\date{}
\dedicatory{}

\begin{abstract}
Factorization and decomposition of {representative} functions on a free monoid $\calX^*$ (generated by an alphabet $\calX$) and with values in a ring $A$ containing $\Q$ are equivalent to factorization and decomposition of their graphs (within the $A$-algebra of rational noncommutative series over $\calX$) admitting {linear representations} (thanks to the Kleene-Sch\"utzenberger theorem).

To factorize and to decompose effectively these graphs, we examine various products of noncommutative series (as concatenation, shuffle and its $\phi$-deformations) and co-products such that, for $A$ is a field $K$, their associated non graded commutative and co-noncommutative bialgebras of series are isomorphic to the Sweedler's dual of the graded noncommutative co-commutative bialgebras of polynomials having, for the concatenation, only Kleene stars of the planes as characters, or equivalently, only the planes are infinitesimal characters (thanks to a Ree's theorem like).
\end{abstract}

\keywords{}
\maketitle
\tableofcontents

\section{Introduction}\label{intro}
Hopf algebras appeared in algebraic geometry and topology, theory of algebraic groups and representation theory, in which representative functions, with values in a field $K$, were investigated systematically on a group $G$ \cite{abe,Cartier2,Hochschild}.
As in \cite{CartierPatras}, let\begin{eqnarray}\label{f}
f:\calX^*\longrightarrow K.
\end{eqnarray}
be a function with $K$-values and on the monoid $\calX^*$ generated by the alphabet $\calX$.

\begin{remark}\label{multiindex}
Multiindex $(s_1,\ldots,s_r)\in\N_{\ge1}^r$ belongs to the monoid generated by $\N_{\ge1}$, denoted by $(\N_{\ge1})^*$.
Multiindexes one-to-one corresponds\footnote{See also the morphisms of algebras $\pi_Y$ and $\pi_X$ in Appendix below.} to the words $y_{s_1}\cdots y_{s_r}$ (resp. $x_0^{s_1-1}x_1\ldots x_0^{s_r-1}x_1$) of the monoid $Y^*$ (resp. $X^*$) generated by the alphabet $Y=\{y_k\}_{k\ge1}$ (resp. $X=\{x_0,x_1\}$).

Multiindexed combinatorial objets as quasi-symetric functions, diagrams and similar ones are also functions on $(\N_{\ge1})^*$.
By composition of functions or by bijections, functions on monoids of theses objets are also functions on $(\N_{\ge1})^*$.
\end{remark}

\begin{example}\label{pos}
The functions $\Li_{\bullet}$ and $\H_{\bullet}$ on the monoid $(\N_{\ge1})^*$ map the multiindex $(s_1,\ldots,s_r)$, respectively, to the complex function $\Li_{s_1,\ldots,s_r}$ and to the arithmetical function $\H_{s_1,\ldots,s_r}$. These are defined by
\begin{eqnarray*}
\Li_{s_1,\ldots,s_r}(z):=\sum_{0<n_1<\cdots<n_r\phantom{\le n}}\frac{z^{n_1}}{n_1^{s_1}\cdots n_r^{s_r}},\cr
\H_{s_1,\ldots,s_r}(n):=\sum_{0<n_1<\cdots<n_r\le n}\frac1{n_1^{s_1}\cdots n_r^{s_r}}
\end{eqnarray*}
and are so-called polylogarithm and harmonic sum, respectively. These lie as follows
\begin{eqnarray*}
\frac{1}{1-z}\Li_{s_1,\ldots,s_r}(z)=\sum_{n\ge0}\H_{s_1,\ldots,s_r}(n)z^n.
\end{eqnarray*}

By Remark \ref{multiindex} (see also Appendix below), $\Li_{\bullet}$ (resp. $\H_{\bullet}$) is also a function on the monoid $X^*$ (resp. $Y^*$):
\begin{eqnarray*}
\Li_{x_0^{s_1-1}x_1\ldots x_0^{s_r-1}x_1}=\Li_{s_1,\ldots,s_r}&\mbox{and}&\H_{y_{s_1}\ldots y_{s_r}}=\H_{s_1,\ldots,s_r}.
\end{eqnarray*}
\end{example}

\begin{example}\label{neg}
Similarly to Example \ref{pos}, one also defines, for any multiindex $(s_1,\ldots,s_r)$ of $(\N)^*$ associated with the word $y_{s_1}\ldots y_{s_r}$ in $Y_0^*$, where $Y_0:=\{y_k\}_{k\ge0}$,
\begin{eqnarray*}
\Li_{-s_1,\ldots,-s_r}(z):=\sum_{0<n_1<\cdots<n_r}{n_1^{s_1}\cdots n_r^{s_r}}z^{n_1},\\
\H_{-s_1,\ldots,-s_r}(n):=\sum_{0<n_1<\cdots<n_r\le n}{n_1^{s_1}\cdots n_r^{s_r}}.
\end{eqnarray*}
These also lie as follows
\begin{eqnarray*}
\frac{1}{1-z}\Li_{-s_1,\ldots,-s_r}(z)=\sum_{n\ge0}\H_{-s_1,\ldots,-s_r}(n)z^n.
\end{eqnarray*}
\end{example}

\begin{example}\label{MZV}
With the notations in Example \ref{pos}, for $s_1>1$, the following limits exist and coincide (see a theorem by Abel):
\begin{eqnarray*}
\zeta(s_1,\ldots,s_r)=\left\{\begin{array}{rcl}
\zeta(x_0^{s_1-1}x_1\ldots x_0^{s_r-1}x_1)&:=&\lim\limits_{z\to1}\Li_{x_0^{s_1-1}x_1\ldots x_0^{s_r-1}x_1}(z),\\
\zeta(y_{s_1}\ldots y_{s_r})&:=&\lim\limits_{n\to+\infty}\H_{y_{s_1}\ldots y_{s_r}}(n).
\end{array}\right.
\end{eqnarray*}

This common limit is so-called \textit{polyzeta}\footnote{Polyzeta is a contraction of polymorphism and zeta.} and is obtained as image by the real $\zeta$-\textit{polymorphism} of the sequence $(s_1,\ldots,s_r)$ belonging to the monoid $(\N_{\ge1})^*$ or equivalently, by Remark \ref{multiindex}, it is also images of the sequences $y_{s_1}\ldots y_{s_r}\in Y^*$ or $x_0^{s_1-1}x_1\ldots x_0^{s_r-1}x_1\in X^*$ (see Appendix below):
\begin{eqnarray*}
\zeta(y_{s_1}\ldots y_{s_r})
=\zeta(x_0^{s_1-1}x_1\ldots x_0^{s_r-1}x_1)
=\sum_{0<n_1<\cdots<n_r}\frac1{n_1^{s_1}\cdots n_r^{s_r}}.
\end{eqnarray*}
\end{example}

The function $f$ in \eqref{f} is a \textit{representative} if and only if there is finitely many functions $\{f'_i,f''_i\}_{i\in I_{finite}}$ of $K^{\calX^*}$, which can be choosen to be \textit{representative} functions such that, for any $u$ and $v\in\calX^*$, one has \cite{CartierPatras}
\begin{eqnarray}\label{representative}
f(uv)=\sum_{i\in I_{finite}}f'_i(u)f''_i(v).
\end{eqnarray}

With the notations in \eqref{f}--\eqref{representative}, the coproduct of \textit{representative} function $f$ can be defined in duality with the product in $\calX^*$ (\textit{i.e.} the concatenation, denoted by $\tt conc$ and omitted when there is no ambiguity) as follows \cite{CartierPatras}
\begin{eqnarray}\label{Delta_f}
\forall u,v\in\calX^*,
&\Delta(f)(u\otimes v)=f(uv),
&\Delta(f)=\sum_{i\in I_{finite}}f'_i\otimes f''_i.
\end{eqnarray}
Moreover, the graph of $f$, viewed as a noncommutative generating series over $\calX$ and with coefficients in $K$, is described as follows
\begin{eqnarray}\label{ngs}
S=\sum_{w\in\calX^*}\scal{S}{w}w,&\mbox{where}&\scal{S}{w}=f(w).
\end{eqnarray}

By definition, any series $S$ is a function from $\calX^*$ to $K$ (see \eqref{S}--\eqref{graph} below) mapping the word $w$ in $\calX^*$ to $\scal{S}{w}$, so-called \textit{coefficient} of $w$ in $S$, and this series is \textit{rational} if and only if there is a \textit{linear representation} of dimension $n$, \textit{i.e.} the triplet $(\nu,\mu,\eta)$, where the matrices $\nu$ and $\eta$ belong to $M_{1,n}(K)$ and $M_{n,1}(K)$, respectively, and the morphism of monoids $\mu:\calX^*\longrightarrow M_{n,n}(K)$ (a \textit{linear representation} of $\calX^*$) such that (Kleene-Sch\"utzenberger theorem \cite{berstel}, see also Theorem \ref{KS} below)
\begin{eqnarray}\label{tripletcoeff}
\forall w\in\calX^*,&\scal{S}{w}=\nu\mu(w)\eta.
\end{eqnarray}
By left and right shifts (see Definition \ref{dec1} below), minimization algorithms provide minimal linear representations of the smallest dimension \cite{berstel}.

With the triplet $(\nu,\mu,\eta)$, for any $i=1,\ldots,n$, let $G_i$ and $D_i$ be the \textit{rational} series, over $\calX$ with coeffiocients in $K$, admitting respectively $(\nu,\mu,e_i)$ and $({}^te_i,\mu,\eta)$ as \textit{linear representations} of dimension $n$, where (see Proposition \ref{PQ} below)
\begin{eqnarray}\label{sweedler}
\lunderset{\hskip63.5mm\uparrow i}{\begin{array}{rcl}
e_i\in\calM_{1,n}(A)&\mbox{and}&{}^te_i=\begin{matrix}(0&\ldots&0&1&0&\ldots&0)\end{matrix}.
\end{array}}
\end{eqnarray}
Then, extending the coproduct $\Delta_{\conc}$ over the Cauchy $K$-algebras of polynomials and then of series (see \eqref{conc}--\eqref{Dconc} below), one has (see Proposition \ref{PQ} below) \cite{CM}
\begin{eqnarray}\label{Delta_S}
\Delta_{\conc}(S)=\sum_{1\le i\le n}G_i\otimes D_i
\end{eqnarray}

By \eqref{Delta_f}--\eqref{Delta_S}, for any $u$ and $v\in\calX^*$, note also that
\begin{eqnarray}
\Delta(f)(u\otimes v)=&f(uv)&=\scal{S}{uv},\\
\scal{\Delta(S)}{u\otimes v}
=&\displaystyle\sum_{1\le i\le n}\scal{G_i}{u}\scal{D_i}{v}&=\sum_{1\le i\le n}f'_i(u)f''_i(v).\label{Delta_fi}
\end{eqnarray}
Moreover,
$f'_i(u)=\scal{G_i}{u}=\nu\mu(u)e_i$ and $f''_i(v)=\scal{D_i}{v}={}^te_i\mu(v)\eta$ ($1\le i\le n$).

Hence, the function $f$ on a monoid $\calX^*$ with values in a field $K$, in \eqref{f}, is {representative} if and only if the series $S$ over an alphabet $\calX$ with coefficients in $K$, in \eqref{S}, is {rational} and $S$ is also said to be {representative} (so do $G_i$ and $D_i$) \cite{orlando,galoisdiff}.

In the present work, representative series is viewed as noncommutative generating series of representative functions, on a monoid $\calX^*$ with values in a ring $A$ containing $\Q$, and are factorized and decomposed within their associated $A$-bialgebras.

For that, effective factorization and decomposition will be done, as in \eqref{f}--\eqref{Delta_fi} and in continuation with our previous works in \cite{PMB,QTS12,ACA,hoangjacoboussous,FPSAC96,CM}. In particular, on the one hand, the monoidal factorizations (by Lazard and by Sch\"ut\-zenberger) \cite{lothaire,viennotgerard} will be used. On the other hand, the results already obtained over $\Q$ or $\C$ in \cite{PMB,Linz,ACA,CM} (dealing with shuffle product, $\shuffle$, and quasi-shuffle product, $\stuffle$, of noncommutative series, over an alphabet $\calX$ and with coefficients in a ring $A$ containing $\Q$, and leading to the studies\footnote{See Appendix below for an synthesis of theses studies.} of polylogarithms and harmonic sums) will be also extended.

The organization of the paper is following
\begin{itemize}
\item In Section \ref{FPS}, we will examine combinatorials aspects of various products (as concatenation, shuffle and its $\phi$-deformations (denoted by $\shuffle_{\phi}$)  and their coproducts, for which primitive and grouplike series will be characterized, by Propositions \ref{proposition1}--\ref{proposition2}. Moreover, pairs of dual bases for $\shuffle$ and for ${\shuffle}_{\phi}$ $A$-bialgebras will be constructed, by Theorems \ref{CQMM}--\ref{isomorphy}, to factorize diagonal series and then grouplike series (see Corollary \ref{factorisations} below).

\item In Section \ref{representativeseries}, representative series will be factorized and decomposed (see Corollary \ref{factorized} below). For $A=K$, the Sweedler's duals of the $\shuffle$ (resp. ${\shuffle}_{\phi}$) bialgebras of polynomials will proved to be isomorphic to (\textit{non graded}) bialgebras of representative series (see Propositions \ref{PQ}--\ref{simple}, Theorem \ref{KS}, Corollaries \ref{Sweedler}--\ref{star} below). Secondly, for $\Delta_{\conc}$, only Kleene stars of the plane are grouplike and only planes are primitive (see Definition \ref{cylindric}, Theorem \ref{exchangeable} and Corollary \ref{Kleene} below).
\end{itemize}

Historically, noncommutative formal series were introduced for the first time by Sch\"utzenberger \cite{schutz2} to study problems related to theoretical computer science, such as language theory and the theory of automata. He generalized a theorem by Kleene \cite{kleene} to noncommutative formal series \cite{schutz3}. He then showed the fundamental role played, for the study of noncommutative formal series, by \textit{linear representations} of free monoids. In particular irreducible representations, allowed him to find fine results on the growth of the coefficients in \eqref{tripletcoeff} \cite{schutz4}.

After Sch\"utzenberger, we must cite the works of Fliess \cite{fliess-these}, Jacob \cite{jacob-these} and Reutenauer \cite{reutenauer-these}.
They developed sets of fundamental tools for combinatorial studies of free monoids, linked to the theory of automata. Indeed, using the Hankel matrices, Fliess characterized noncommutative rational series (a series is rational if and only if the rank of its Hankel matrix is finite) and series with positive coefficients. Using matrix representations, Jacob generalized the notion of loop in finite automata (star lemma) which enables to solve problems of decidability of the finiteness of the coefficients of rational series. Reutenauer characterized rational series by their syntactic algebra (a series is rational if and only if the dimension of its syntactic algebra is finite) and defined the notion of varieties of formal series in the sense of Eilenberg \cite{eilenberg}. These works were widely reproduced in books by Berstel and Reutenauer \cite{berstel}, by Kuich and Salomaa \cite{kuichsalomaa} and by Salomaa and Soittola \cite{SalomaaSoittola}.

As Chomsky and Sch\"utzenberger showed that the algebraic languages are the supports of algebraic series \cite{chomskyschutz} as being solutions of a system of propre algebraic equations with integer coefficients. Nivat \cite{nivat} and then Fliess \cite{fliess-these} showed that the study of noncommutative algebraic series mainly depends on the study of rational transductions of algebraic series. Introducing the notion of regulated transduction (to establish the converse of the Shamir's theorem), Jacob showed that the image by regulated rational (resp. algebraic) transduction of a rational (resp. algebraic) series is a rational (resp. algebraic) series, opening a way to the study of nice families of noncommutative series. Kuich further developed this method in an approach by cycle-free push-down automata to study noncommutative algebraic series \cite{kuich}.

Noncommutative series benefited then from knowledge of language and automata theories. In return, techniques and results in \cite{fliess-these,jacob-these,reutenauer-these} enabled new visions in these fields \cite{berstel}.
These developments made the algebra of formal series a preferred tool for the syntactic study of operator algebras \cite{reutenauer}. Furthermore, algebra of formal series proved to be a particularly well-suited tool for implementations of effective calculations in computer algebra systems \cite{jacobreutenauer,jacobIMACS}.
In particular, since the input-output behaviors of (nonlinear) dynamical systems (or causal functionals) was encoded  by noncommutative series (see \cite{jacob} for an introduction), the noncommutative symbolic computation (a generalization of the Heaviside's calculus) became efficient for dealing with special functions (hypergeometric functions, hyperlogarithms, polylogarithms \cite{ACA,FPSAC96,livre,CM}) in the study of (nonlinear) differential equations in
control theory \cite{fliess1,fliess2,hoangjacoboussous,jacob,reutenauerrealisation}
and in quantum electrodynamics (QED) \cite{PMB,QTS12,ACA,FPSAC96}.

Let us illustrate our purposes by considering the following dynamical system
\begin{eqnarray}\label{nonlinear}
\left\{\begin{array}{rcl}
\displaystyle\frac{d}{dz}q(z)&=&A_0(q)u_0(z)+\ldots+A_m(q)u_m(z),\\
q(z_0)&=&\eta,\\
y(z)&=&h(q(z)),
\end{array}\right.
\end{eqnarray}
where $y$ is the output and
\begin{itemize}
\item $(u_i)_{0\le i\le m}$ are the inputs (and their inverses $(u_i^{-1})_{0\le i\le m}$) belong  to a subring of the differential ring of holomorphic functions $\calH(\Omega)$ with the neutral element $1_{\calH(\Omega)}$ over the simply connected manifold $\Omega$.

\item $h$ is the observation defined within a fixed connected neighbourhood\footnote{Here, the points are loosely identified with their coordinates through some chart $\varphi_U:U\to \C^n$ likewise, in \cite{reutenauerrealisation}, the space of holomorphic functions $\calH(U)$ is described by $\C^{\rm cv}[\![q_1,\ldots,q_n]\!]$.} $U$ of of the initial state $\eta$,

\item $(A_i)_{0\le i\le m}$ are the vector fields, defined with respect to the coordinates, by
\begin{eqnarray}\label{vectorfield}
A_i=\sum_{1\le j\le n}A_i^j(q)\frac{\partial}{\partial q_j},&\mbox{with}&A_i^j(q)\in\calH(U),
\end{eqnarray}
\end{itemize}

It is convenient (and possible) to separate the contribution of the vector fields $(A_i)_{0\le i\le m}$ and that of the differential forms $(\omega_i)_{0\le i\le m}$ defined by the inputs, \textit{i.e.} $\omega_i=u_idz$, through the encoding alphabet $X=\{x_i\}_{0\le i\le m}$ generating the monoid $(X^*,1_{X^*})$. Indeed, the output $y$ (depending on $z_0$) can be computed by
\begin{eqnarray}\label{output}
y(z)=\sum_{w\in X^*}\alpha_{z_0}^z(w)A_wh_{|_{\eta}},
\end{eqnarray}
where the differential operator $A_w$ and  the iterated integral $\alpha_{z_0}^z(w)$ (of the differential forms $(\omega_i)_{0\le i\le m}$ and along the path $z_0\path z$ over the manifold $\Omega$) are defined by $\alpha_{z_0}^z(1_{X^*})=1_{\calH(\Omega)}$ and $A_{1_{X^*}}=\mathrm{Id}$ and recursively by \begin{eqnarray}\label{notation}
\forall x_i\in\calX,\forall v\in\calX^*,&A_{x_iv}=A_i\circ A_v,&
\alpha_{z_0}^z(x_iv)=\int_{z_0}^z\omega_i(s)\alpha_{z_0}^{s}(v).
\end{eqnarray}
In \eqref{output}, the output $y$ is viewed as the pairing, under convergence conditions \cite{ACA,fliess2,reutenauerrealisation}, between the Chen series \cite{chen,Hain} and the generating series of \eqref{nonlinear} \cite{fliess2}, defined as noncommutative series over the alphabet $X$, respectively as follow
(in which $\alpha_{z_0}^z(w)$ and $A_wh$ belong to the differential rings $\calH(\Omega)$ and $\calH(U)$, respectively)
\begin{eqnarray}\label{ngsofds}
C_{z_0\path z}:=\sum_{w\in X^*}\alpha_{z_0}^z(w)w
&\mbox{and}&\sigma h_{|_{\eta}}:=\sum_{w\in X^*}A_wh_{|_{\eta}}w.
\end{eqnarray}

\begin{remark}
In control theory, the inputs $\{u_i\}_{1\le i\le m}$ in \eqref{nonlinear} are encoded by the alphabet $X=\{x_i\}_{1\le i\le m}$ and are Lebesgue integrable real-valued functions on the interval $[0,T]$ ($T\in\R_{\ge0}$, is so-called the duration of the controls) and then the Chen series, of $\{\omega_i\}_{1\le i\le m}$ and over the interval $[0,T]$, belongs to $\ncs{L^{\infty}([0,T],\R)}{X}$.
\end{remark}

Now, for simplification, let us focus on the representative (or rational) series as generating series of the following type of dynamical systems \cite{fliess1,fliess2,jacob}
\begin{eqnarray}\label{linear}
dq(z)=\big(M_0\omega_0(z)+\ldots+M_m\omega_m(z)\big)q(z),&
q(z_0)=\eta,&
y(z)=\nu q(z),
\end{eqnarray}
where $\{M_i\}_{1\le i\le n}$ and $\nu$ are, respectively, $n\times n$ and $1\times n$ matrices with coefficients in the ring $A$. In this case, the vectors fields \eqref{vectorfield} can be expressed as follows
\begin{eqnarray}
A_i=\sum_{1\le j\le n}\Big(\sum_{1\le k\le n}m_{k,j}q_k\Big)\frac{\partial}{\partial q_j}
\end{eqnarray}
and, using the morphism $\mu$ from $X^*$ to the ring $\calM_{n,n}(A)$ mapping $x_i$ to $M_i$, the series $\sigma h_{|_{\eta}}$ in \eqref{output} admits $(\nu,\mu,\eta)$ as linear representation of dimension $n$ such that
\begin{eqnarray}
\forall w\in X^*,&
&\scal{\sigma h_{|_{\eta}}}{w}=A_wh_{|_{\eta}}=\nu\mu(w)\eta.
\end{eqnarray}

\begin{example}[Hypergeometric equation, \cite{ACA}]\label{Hypergeometric}
Let $t_0,t_1,t_2$ be parameters and
\begin{eqnarray*}
z(1-z)\frac{d^2}{dz^2}y(z)+[t_2-(t_0+t_1+1)z]\frac{d}{dz}y(z)-t_0t_1 y(z)=0.
\end{eqnarray*}
Let $q_1(z)=-y(z)$ and $q_2(z)=(1-z){d}/{dz}\;y(z)$. One has
\begin{eqnarray*}
\frac{d}{dz}\begin{pmatrix}
q_1(z)\cr q_2(z)\end{pmatrix}
=(M_0u_0(z)+M_1u_1(z))
\begin{pmatrix}q_1(z)\cr q_2(z)\end{pmatrix},
\end{eqnarray*}
where the inputs $u_0(z)=z^{-1}$ and $u_1(z)=(1-z)^{-1}$ are rational functions in $\C(z)$ and $M_0$ and $M_1$ are $2\times2$ matrices with coefficients in $\C[t_0,t_1,t_2]$
\begin{eqnarray*}
M_0=-\begin{pmatrix}0&0\cr t_0t_1&t_2\end{pmatrix}&\mbox{and}&M_1=-\begin{pmatrix}0&1\cr0&t_2-t_0-t_1\end{pmatrix}.
\end{eqnarray*}
Or equivalently, with $\omega_0(z)=u_0(z)dz$ and $\omega_1(z)=u_1(z)dz$,
\begin{eqnarray*}
dq(z)=A_0(q)\omega_0(z)+A_1(q)\omega_1(z)&\mbox{and}&y(z)=-q_1(z),
\end{eqnarray*}
where $A_0$ and $A_1$ are the following parametrized linear vector fields
\begin{eqnarray*}
A_0=-(t_0t_1q_1+t_2q_2)\frac{\partial}{\partial q_2}
&\mbox{and}&
A_1=-q_1\frac{\partial}{\partial q_1}-(t_2-t_0-t_1)q_2\frac{\partial}{\partial q_2}
\end{eqnarray*}
acting as follows
\begin{eqnarray*}
\frac{\partial}{\partial q_1}(q)= \frac{\partial}{\partial q_1}\begin{pmatrix}q_1\cr q_2\end{pmatrix}=\begin{pmatrix} 1\cr 0\end{pmatrix}
&\mbox{and}&
\frac{\partial}{\partial q_2}(q)=\frac{\partial}{\partial q_2}\begin{pmatrix}q_1\cr q_2\end{pmatrix}=\begin{pmatrix} 0\cr 1\end{pmatrix}.
\end{eqnarray*}

Using the notations in \eqref{notation} and  Example \ref{pos}, the iterated integrals of the differential forms $\omega_0$ and $\omega_1$ and along a path $0\path z$ over $\Omega=\widetilde{\C\setminus\{0,1\}}$ lead to the polylogarithms\footnote{In Appendix, a synthesis of results on polylogarithms will be presented.}, \textit{i.e.} $\alpha_0^z(x_0^{s_1-1}x_1\ldots x_0^{s_r-1}x_1)=\Li_{x_0^{s_1-1}x_1\ldots x_0^{s_r-1}x_1}(z)=\Li_{s_1,\ldots,s_r}(z)$.
\end{example}

\section{Algebraic combinatorics on formal power series}\label{FPS}
\subsection{Various products of formal power series}\label{coproducts}
As already said in Section \ref{intro}, $A$ denotes a ring containing $\Q$ and $\calX$ denotes either a finite alphabet or an infinite alphabet, as\footnote{Note that $m$ can be infinite \cite{PMB}.

See \cite{FPSAC98,orlando} (resp. \cite{galoisdiff}), for examples with $m=1$ (resp. $m>1$).} (see also Remark \ref{orders_Rk} below)
\begin{eqnarray}\label{alphabets}
X:=\{x_i\}_{0\le i\le m}&\mbox{or}&Y:=\{y_k\}_{k\ge1},
\end{eqnarray}
generating the free monoid\footnote{$1_{\calX^*}$ is the neutral element of the monoid $\calX^*$.} $(\calX^*,1_{\calX^*})$, for the concatenation product (denoted by $\tt conc$ and omitted when there is no ambiguity).
One also denotes \cite{berstel}
\begin{eqnarray}
\calX^+:=\calX^*\calX=\calX\calX^*=\calX^*\setminus\{1_{\calX^*}\}.
\end{eqnarray}

The \textit{length} of a word $w$ in $\calX^*$ is denoted by $\abs{w}$.
For $w=y_{s_1}\cdots y_{s_r}\in Y^*$, the \textit{weight} of $w$ is the number $s_1+\cdots+s_r$ and is denoted by $(w)$.

A \textit{series} $S$ is the following map
\begin{eqnarray}\label{S}
S:\calX^*\longrightarrow A,&&w\longmapsto\scal{S}{w},
\end{eqnarray}
and its \textit{graph} is described as follows \cite{berstel}
\begin{eqnarray}\label{graph}
S=\sum\limits_{w\in\calX^*}\scal{S}{w}w.
\end{eqnarray}
The constant term of $S$ is $\scal{S}{1_{\calX^*}}$. If $\scal{S}{1_{\calX^*}}=0$ then $S$ is said to be \textit{propre}.
The image of $S$ is denoted by $\mathrm{Im}(S)$. The \textit{support} of $S$ is the following language \cite{berstel}
\begin{eqnarray}
\supp S:=\{w\in\calX^*\vert\scal{S}{w}\neq0\}.
\end{eqnarray}
If $\supp S$ is finite then it is a \textit{polynomial} of \textit{degree} defined by $\max\{\abs{w}\}_{w\in\supp S}$ \cite{berstel}.

The polynomial $P$ is \textit{homogenous} in degree $n$ if it is linear combination of words of length $n$. The \textit{characteristic series} of a \textit{language} $L\subset\calX^*$ is defined by \cite{berstel}
\begin{eqnarray}
\mathrm{char}L=\sum_{w\in L}w.
\end{eqnarray}
In particular, one still denotes, for convenience, the \textit{characteristic series} of $\calX$ (resp. $\calX^*$) by $\calX$ (resp. $\calX^*$) and then
\begin{eqnarray}\label{X+}
\calX^+=\calX^*-1_{\calX^*}.
\end{eqnarray}

The set of series over $\calX$ with coefficients in $A$ is denoted by \cite{berstel}
\begin{eqnarray}
\ncs{A}{\calX}=A^{\calX^*}.
\end{eqnarray}
Then, for any $\lambda\in A$, $S$ and $R\in\ncs{A}{\calX}$, the mutiplication $S$ by $\lambda$ and the sum of $S$ and $R$ (both belonging $\ncs{A}{\calX}$) are given, for any $w\in\calX^*$, respectively by
\begin{eqnarray}
\scal{\lambda S}{w}=\lambda\scal{S}{w}&\mbox{and}&\scal{S+T}{w}=\scal{S}{w}+\scal{T}{w}.
\end{eqnarray}
The set of polynomials over $\calX$ with coefficients in $A$ is an $A$-module, attmiting $\{w\}_{w\in X^*}$ as linear basis, and is denoted by \cite{berstel}
\begin{eqnarray}
\ncp{A}{\calX}\cong A[\calX^*].
\end{eqnarray}

In the sequel, all algebras, linear maps and tensor signs that appear in the following are over $A$ unless specified otherwise.

By the following \textit{pairing}, in which the sum is finite because $P$ is a polynomial,
\begin{eqnarray}
\ncs{A}{\calX}\otimes\ncp{A}{\calX}\longrightarrow A,&&
T\otimes P\longmapsto\scal{T}{P}:=\sum_{w\in\calX^*}\scal{T}{w}\scal{P}{w},\label{pairing}
\end{eqnarray}
there is a natural duality between $\ncp{A}{\calX}$ and $\ncs{A}{\calX}$, \textit{i.e.} \cite{berstel}
\begin{eqnarray}
\ncs{A}{\calX}=\ncp{A}{\calX}^{\vee}.
\end{eqnarray}
From \eqref{pairing}, using the Kronecker delta, it follows that
\begin{eqnarray}
\forall u,v\in\calX^+,&\scal{u}{v}=\delta_{u,v}.
\end{eqnarray}

Let $\ncs{A}{\calX}$ be equipped the ultrametric distance defined by \cite{berstel}
\begin{eqnarray}\label{distance}
\forall S,T\in\ncs{A}{\calX},&d(S,T)=2^{-\omega(S-T)}.
\end{eqnarray}
where $\omega(S)$ denotes the valuation of $S\in\ncs{A}{\calX}$ defined by \cite{berstel}
\begin{eqnarray}
\omega(S)&:=&\left\{\begin{array}{lcl}
+\infty&\mbox{if}&S=0,\\
\inf\{\abs{w}\}_{w\in\supp S}&\mbox{if}&S\neq0,
\end{array}\right.
\end{eqnarray}
For the discrete topology defined in \eqref{distance}, $\ncs{A}{\calX}$ is a complete topological ring and $\ncp{A}{\calX}$ is a dense subring of $\ncs{A}{\calX}$, \textit{i.e} \cite{berstel}
\begin{eqnarray}
\widehat{\ncp{A}{\calX}}=\ncs{A}{\calX}.
\end{eqnarray}

Let $\ncp{\calL ie_A}{\calX}$ be the smallest $A$-submodule of $\ncp{A}{\calX}$, contaning $\calX$ and being closed by the Lie bracket defined, for any $P$ and $Q\in\ncp{A}{\calX}$, as follows \cite{lothaire,reutenauer,viennotgerard}
\begin{eqnarray}
[P,Q]=PQ-QP.
\end{eqnarray}
This bracket is anticommutative and satisfies the Jacobi identity \cite{lothaire,reutenauer,viennotgerard}. Any element $P$ of $\ncp{\calL ie_A}{\calX}$ is called \textit{Lie polynomial} and it is propre, \textit{i.e.} $\scal{P}{1_{\calX^*}}=0$. It is also proved that $\ncp{\calL ie_A}{\calX}$ is the free Lie algebra over $A$ \cite{lothaire,reutenauer,viennotgerard}.

A series $S\in\ncs{A}{\calX}$ is a \textit{Lie series} if it is uniquely expressed as follows \cite{reutenauer}
\begin{eqnarray}
S=\sum_{k\ge1}P_k,
\end{eqnarray}
where $\{P_k\}_{k\ge1}$ are Lie polynomials homogenous of weight $k$ \cite{lothaire,reutenauer,viennotgerard}. The Lie algebra of Lie series is denoted by $\ncs{\calL ie_A}{\calX}$ and is endowed the bracket defined by
\begin{eqnarray}
S=\sum_{k\ge1}P_k\mbox{ and }R=\sum_{l\ge1}Q_l,&&
[S,R]=\sum_{k,l\ge1}[P_k,Q_l],
\end{eqnarray}

As algebras, $\ncp{A}{\calX}$ is also equipped
\begin{itemize}
\item The associative noncommutative and unital concatenation, \textit{i.e.} the following bilinear map
\begin{eqnarray}\label{conc}
\conc:\ncp{A}{\calX}\otimes\ncp{A}{\calX}&\longrightarrow&\ncp{A}{\calX}
\end{eqnarray}
or, equivalently, by the coproduct (with respected to the pairing in \eqref{pairing})
\begin{eqnarray}\label{Dconc}
\Delta_{\conc}:\ncp{A}{\calX}\longrightarrow\ncp{A}{\calX}\otimes\ncp{A}{\calX},&&
x\longmapsto1_{\calX^*}\otimes x+x\otimes1_{\calX^*}
\end{eqnarray}
such that, for any $u,v,w\in\calX^*$ as follows
\begin{eqnarray}
\scal{{\tt conc}(u,v)}{w}=\scal{uv}{\Delta_{\conc}w}.
\end{eqnarray}
$\Delta_{\conc}$ is a morphism for concatenation and then, for any $w\in\calX^*$, one has
\begin{eqnarray}
\Delta_{\conc}w=\sum_{u,v\in\calX^*,uv=w}u\otimes v.
\end{eqnarray}

\item The associative commutative and unital shuffle product, \textit{i.e.} 
\begin{eqnarray}\label{shuffle}
\shuffle:\ncp{A}{\calX}\otimes\ncp{A}{\calX}\longrightarrow\ncp{A}{\calX}
\end{eqnarray}
defined, for any $x,y\in\calX$ and $u,v,w\in\calX^*$, by the following recursion
\begin{eqnarray}
&w\shuffle 1_{\calX^*}=1_{\calX^*}\shuffle w=w&\\
&xu\shuffle yv=x(u\shuffle yv)+y(xu\shuffle v),&
\end{eqnarray}
or equivalently, the coproduct (with respected to the pairing in \eqref{pairing})
\begin{eqnarray}\label{Dshuffle}
\Delta_{\shuffle}:\ncp{A}{\calX}\longrightarrow\ncp{A}{\calX}\otimes\ncp{A}{\calX},&&
x\longmapsto{\calX^*}\otimes x+x\otimes1_{\calX^*}
\end{eqnarray}
and such that, for any $u,v,w\in\calX^*$,
\begin{eqnarray}
\scal{u\shuffle v}{w}=\scal{uv}{\Delta_{\shuffle}(w)}.
\end{eqnarray}
It is a morphism for concatenation, \textit{i.e.}, for any $u,v\in\calX^*$,
\begin{eqnarray}
\Delta_{\shuffle}(uv)=(\Delta_{\shuffle}u)(\Delta_{\shuffle}v).
\end{eqnarray}
\end{itemize}

\begin{definition}[\cite{SLC74,siblings}]\label{phishuffle}
Let us consider the following structure constants being finite
\begin{eqnarray*}
\forall z\in Y,&\#\{(x,y)\in Y^2|\gamma_{x,y}^z\not=0\}<+\infty
\end{eqnarray*}
and the following associative commutative map
\begin{eqnarray*}
\phi:Y\times Y\longrightarrow A Y,&&
(y_i,y_j)\longmapsto\sum_{k\in I}\gamma_{i,j}^ky_k.
\end{eqnarray*}

The map ${\shuffle}_{\phi}:Y^*\times Y^*\longrightarrow\ncp{A}{Y}$ defines a product and is extended as a law satisfying, for any $u,v\in Y^*$ and $y_i,y_j\in Y$, the following recursion
\begin{eqnarray*}
&u{\shuffle}_{\phi}1_{Y^*}=1_{Y^*}{\shuffle}_{\phi}u=u,&\\
&y_iu{\shuffle}_{\phi}y_jv=y_i(u{\shuffle}_{\phi}y_jv)+y_j(y_iu{\shuffle}_{\phi} v)+\phi(y_i,y_j)(u{\shuffle}_{\phi} v).&
\end{eqnarray*}
\end{definition}

Additionally, let $\ncp{A}{Y}$ be also equipped with the associative commutative and unital $\phi$-deformations shuffle product, \textit{i.e.} the following bilinear map
\begin{eqnarray}\label{stuffle}
{\shuffle}_{\phi}:\ncp{A}{Y}\otimes\ncp{A}{Y}\longrightarrow\ncp{A}{Y},
\end{eqnarray}
satisfying two items of Definition \ref{phishuffle}. Or equivalently, the coproduct (with respected to the pairing in \eqref{pairing})
\begin{eqnarray}\label{Dstuffle}
\Delta_{{\shuffle}_{\phi}}:\ncp{A}{Y}&\longrightarrow&\ncp{A}{Y}\otimes\ncp{A}{Y},\\
y_k&\longmapsto&y_k\otimes 1_{Y^*}+1_{Y^*}\otimes y_k+\sum\limits_{i+j=k}\gamma_{i,j}^ky_i\otimes y_j
\end{eqnarray}
and such that, for any $u,v,w\in Y^*$,
\begin{eqnarray}
\scal{u{\shuffle}_{\phi} v}{w}=\scal{uv}{\Delta_{{\shuffle}_{\phi}}(w)}.
\end{eqnarray}
The coproduct $\Delta_{{\shuffle}_{\phi}}$ is also a $\conc$-morphism, \textit{i.e.}, for any $u,v\in Y^*$,
\begin{eqnarray}
\Delta_{{\shuffle}_{\phi}}uv=(\Delta_{{\shuffle}_{\phi}}u)(\Delta_{{\shuffle}_{\phi}}v).
\end{eqnarray}

\begin{proposition}[\cite{SLC74,siblings}]
One has
\begin{itemize}
\item ${\shuffle}_{\phi}$ is commutative if and only if
$\phi:A{Y}\times A{Y}\longrightarrow{A}{{Y}}$ is so.
\item ${\shuffle}_{\phi}$ is associative if and only if
$\phi:A{Y}\times A{Y}\longrightarrow{A}{{Y}}$ is so.
\item ${\shuffle}_{\phi}$ is dualizable if and only if $\{(\gamma_{x,y}^z\}_{x,y,z\in {Y}}$ satisfy
\begin{eqnarray*}
(\forall z\in {Y})(\#\{(x,y)\in{Y}^2|\gamma_{x,y}^z\not=0\}<+\infty).
\end{eqnarray*}
\end{itemize}
\end{proposition}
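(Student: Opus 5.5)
Each item is proved by comparing $\shuffle_{\phi}$ with $\phi$ on small words and then running an induction on $\abs{u}+\abs{v}$ (resp. $\abs{u}+\abs{v}+\abs{w}$) through the recursion of Definition \ref{phishuffle}. Here $\phi$ is extended bilinearly to $AY\times AY$.

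\emph{Commutativity.} For letters, the recursion gives
$$y_i\shuffle_{\phi}y_j=y_iy_j+y_jy_i+\phi(y_i,y_j).$$
Hence $y_i\shuffle_{\phi}y_j=y_j\shuffle_{\phi}y_i$ holds if and only if $\phi(y_i,y_j)=\phi(y_j,y_i)$, and this gives the ``only if'' direction. Conversely, assume $\phi$ is symmetric and that $u'\shuffle_{\phi}v'=v'\shuffle_{\phi}u'$ for all shorter pairs. For $u=y_iu'$ and $v=y_jv'$, the three terms of $y_iu'\shuffle_{\phi}y_jv'$ match the three terms of $y_jv'\shuffle_{\phi}y_iu'$: the first two are swapped, and the third agrees by the symmetry of $\phi$ and the induction hypothesis. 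The cases where $u$ or $v$ is empty are trivial.

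\emph{Associativity.} I would first expand in degree three. Using the bilinear extension, the coefficient of length-one words in $(y_i\shuffle_{\phi}y_j)\shuffle_{\phi}y_k$ is $\phi(\phi(y_i,y_j),y_k)$, and in $y_i\shuffle_{\phi}(y_j\shuffle_{\phi}y_k)$ it is $\phi(y_i,\phi(y_j,y_k))$. All longer terms coincide, since they are the same combinatorial sums. This gives necessity, because the two sides project onto words of length one. For sufficiency I would prove
$$(u\shuffle_{\phi}v)\shuffle_{\phi}w=u\shuffle_{\phi}(v\shuffle_{\phi}w)$$
by induction on $\abs{u}+\abs{v}+\abs{w}$ with $u=y_iu'$, $v=y_jv'$ and $w=y_kw'$. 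Expanding both sides with the recursion produces the same terms indexed by which letters come first: $y_i$, $y_j$, $y_k$, $\phi(y_i,y_j)$, $\phi(y_i,y_k)$, $\phi(y_j,y_k)$, and $\phi(\phi(y_i,y_j),y_k)$ versus $\phi(y_i,\phi(y_j,y_k))$. To apply the recursion when a factor is $\phi(y_i,y_j)\,t$, one must first know that the recursion extends linearly to a leading element of $AY$; this follows from bilinearity. This bookkeeping is the main obstacle. The cleanest route is probably to note that $\shuffle_{\phi}$ is the quasi-shuffle product of Hoffman type, and to argue through the explicit formula as a sum over surjections (``stuffle'' compositions), where associativity of $\phi$ yields associativity directly.

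\emph{Dualizability.} $\shuffle_{\phi}$ is dualizable when there exists $\Delta_{\shuffle_{\phi}}:\ncp{A}{Y}\to\ncp{A}{Y}\otimes\ncp{A}{Y}$ with $\scal{u\shuffle_{\phi}v}{w}=\scal{u\otimes v}{\Delta_{\shuffle_{\phi}}w}$. Such a map exists if and only if, for each $w$, only finitely many pairs $(u,v)$ have $\scal{u\shuffle_{\phi}v}{w}\neq0$. Taking $w=z$ a letter, $\scal{x\shuffle_{\phi}y}{z}=\gamma_{x,y}^z$, so the finiteness condition is necessary. Conversely, under the finiteness condition one defines $\Delta_{\shuffle_{\phi}}$ on letters as in \eqref{Dstuffle} and extends it as a $\conc$-morphism. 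The duality then follows by induction on $\abs{w}$, using the recursion and the fact that the pairing of $\conc$ with $\Delta_{\conc}$ is compatible. Finiteness of the support of $\Delta_{\shuffle_{\phi}}w$ follows because the pairs contributing to $w$ are obtained from finitely many splittings of the letters of $w$.
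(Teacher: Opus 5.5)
The paper gives no proof of this proposition; it only quotes it from \cite{SLC74,siblings}. There is therefore no argument in the text to compare with. Your proposal is correct, and induction through the defining recursion is the natural direct proof. Two places should be tightened.

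\emph{Associativity.} The bookkeeping is routine, and you do not need the detour through Hoffman's quasi-shuffles. That detour would also be slightly off, since Hoffman assumes the product on letters is commutative and this item does not.

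Write $u=au'$, $v=bv'$, $w=cw'$. Apply the recursion to each of the three terms of $u{\shuffle}_{\phi}v$ against $w$; the recursion extends bilinearly to leading elements of $AY$ and to polynomial tails. Of the nine terms obtained, the three beginning with $c$ refold into $c\bigl((u{\shuffle}_{\phi}v){\shuffle}_{\phi}w'\bigr)$, by reading the recursion for $u{\shuffle}_{\phi}v$ backwards. Symmetrically, in $u{\shuffle}_{\phi}(v{\shuffle}_{\phi}w)$ the three terms beginning with $a$ refold into $a\bigl(u'{\shuffle}_{\phi}(v{\shuffle}_{\phi}w)\bigr)$. Both sides then consist of exactly the seven terms you list. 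They agree by the induction hypothesis together with $\phi(\phi(a,b),c)=\phi(a,\phi(b,c))$.

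\emph{Dualizability.} Do not take the coproduct of a letter from \eqref{Dstuffle} literally. Its sum is restricted to $i+j=k$, which is correct only for weight-compatible $\phi$. For general $\phi$ you need
\[
\Delta_{{\shuffle}_{\phi}}z=z\otimes1_{Y^*}+1_{Y^*}\otimes z+\sum_{x,y\in Y}\gamma_{x,y}^z\,x\otimes y .
\]
The finiteness hypothesis is exactly what makes this an element of $\ncp{A}{Y}\otimes\ncp{A}{Y}$.

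Also, what drives your induction on $\abs{w}$ is not a compatibility of $\conc$ with $\Delta_{\conc}$. It is the identity
\[
\scal{(x\otimes y)T}{au'\otimes bv'}=\delta_{x,a}\delta_{y,b}\scal{T}{u'\otimes v'}
\]
for letters $a,b,x,y$ and $T\in\ncp{A}{Y}\otimes\ncp{A}{Y}$, where the product is componentwise concatenation. You also need its analogues for $x\otimes1_{Y^*}$ and $1_{Y^*}\otimes y$. Together these turn $\scal{\Delta_{{\shuffle}_{\phi}}(zw')}{u\otimes v}$ into exactly the three-term recursion for $\scal{u{\shuffle}_{\phi}v}{zw'}$.

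The base cases with $u$ or $v$ empty use that the only terms of $\Delta_{{\shuffle}_{\phi}}w$ with an empty tensor factor are $w\otimes1_{Y^*}$ and $1_{Y^*}\otimes w$.
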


\begin{example}[$q$-deformation of shuffle product, \cite{SLC74}]\label{q}
Let $q\in\C$ and $\phi$ be defined by
\begin{eqnarray*}
\forall y_i,y_j\in Y,&\phi(y_i,y_j)=q^{i+j}.
\end{eqnarray*}
Then one obtains the $q$-deformation of shuffle product defined, for any $y_i,y_j\in Y^*$ and $u,v\in Y^*$, by
\begin{eqnarray*}
&u{\shuffle}_q1_{Y^*}=1_{Y^*}{\shuffle}_qu=u,\\
&y_iu{\shuffle}_qy_jv=y_i(u{\shuffle}_q)v+y_j(y_iu{\shuffle}_qv)+q^{i+j}u{\shuffle}_qv.
\end{eqnarray*}

In particular, ${\shuffle}_q$ corresponds to 
\begin{itemize}
\item the ordinary shuffle, denoted by $\shuffle$, for $q=0$,
\item the quasi-shuffle, denoted by $\stuffle$, for $q=1$,
\item the minus-stuffle product, denoted by $\ministuffle$, for $q=-1$ .
\end{itemize}
\end{example}

In all the sequel, let $\phi$ be supposed associative, commutative and dualizable.

Now, let us extend the above products (\textit{i.e.} $\conc,\shuffle,{\shuffle}_{\phi}$ in \eqref{conc}, \eqref{shuffle}, \eqref{stuffle}, respectively)
\begin{eqnarray}
\conc,\shuffle:\ncs{A}{\calX}\otimes\ncs{A}{\calX}&\longrightarrow&\ncs{A}{\calX},\\
{\shuffle}_{\phi}:\ncs{A}{Y}\otimes\ncs{A}{Y}&\longrightarrow&\ncs{A}{Y}
\end{eqnarray}
as follows
\begin{eqnarray}
\forall S,R\in\ncs{A}{\calX},&SR&=\sum_{w\in\calX^*}\biggl(\sum_{u,v\in\calX^,uv=w}\scal{S}{u}\scal{R}{v}\biggr)w,\label{extenoverseries1}\\
\forall S,R\in\ncs{A}{\calX},&S\shuffle R&=\sum_{u,v\in\calX^*}\scal{S}{u}\scal{R}{v}u\shuffle v,\label{extenoverseries2}\\
\forall S,R\in\ncs{A}{Y},&S{\shuffle}_{\phi} R&=\sum_{u,v\in Y^*}\scal{S}{u}\scal{R}{v}u{\shuffle}_{\phi} v\label{extenoverseries3}.
\end{eqnarray}

For any propre series $S$, using the fact that $S^0=S^{\shuffle 0}=1_{\calX^*}$ and, for any $k>1$,
\begin{itemize}
\item $S^k=S^{k-1}S=SS^{k-1}$ and $S^{\shuffle k}=S^{\shuffle(k-1)}S=SS^{\shuffle(k-1)}$,
\item $S^{\shuffle k}=k!S^k$,
\end{itemize}
one also defines (see also Definition \ref{diaser} below)
\begin{eqnarray}\label{explog}
\exp S=\sum_{k\ge0}\frac{S^k}{k!}&\mbox{and}&
\exp_{\shuffle}S=\sum_{k\ge0}\frac{S^{\shuffle k}}{k!}=\sum_{k\ge0}S^k,\\
\log(1_{\calX^*}+S)=\sum_{k\ge1}\frac{(-1)^{k-1}}{k}S^k
&\mbox{and}&
\log_{\shuffle}(1_{\calX^*}+S)=\sum_{k\ge0}{(-1)^k}{k!}S^{k+1}.
\end{eqnarray}

The coproducts $\Delta_{\conc},\Delta_{\shuffle},\Delta_{{\shuffle}_{\phi}}$ respectively in \eqref{Dconc}, \eqref{Dshuffle}, \eqref{Dstuffle} are extended by
\begin{eqnarray}
\Delta_{\conc},\Delta_{\shuffle}:\ncs{A}{\calX}&\longrightarrow&\ncs{A}{\calX^*\otimes\calX^*},\\
\Delta_{{\shuffle}_{\phi}}:\ncs{A}{Y}&\longrightarrow&\ncs{A}{Y\otimes Y}
\end{eqnarray}
such that
\begin{eqnarray}
\forall S\in\ncs{A}{\calX},&\Delta_{\conc}S&=\sum_{w\in\calX^*}\scal{S}{w}\Delta_{\conc}w\in{\ncs{A}{\calX^*\otimes\calX^*}},\label{D3}\\
\forall S\in\ncs{A}{\calX},&\Delta_{\shuffle}S&=\sum_{w\in\calX^*}\scal{S}{w}\Delta_{\shuffle}w\in{\ncs{A}{\calX^*\otimes\calX^*}},\label{D2}\\
\forall S\in\ncs{A}{Y},&\Delta_{{\shuffle}_{\phi}}S&=\sum_{w\in Y^*}\scal{S}{w}\Delta_{{\shuffle}_{\phi}}w\in{\ncs{A}{Y^*\otimes Y^*}}.\label{D1}
\end{eqnarray}

\begin{remark}
Let $A=K$ be a field.

Then $\ncs{K}{\calX}\otimes\ncs{K}{\calX}$ embeds (injectively) in $\ncs{K}{\calX^*\otimes\calX^*}\cong\ncs{[\ncs{K}{\calX}]}{\calX}$.

Indeed, $\ncs{K}{\calX}\otimes\ncs{K}{\calX}$ contains the elements of the form $\sum\limits_{i\in I}G_i\otimes D_i$, for some finite $I$ and $(G_i,D_i)\in\ncs{K}{\calX}\times\ncs{K}{\calX}$. But, for any $u$ and $v\in\calX^+$, one has
\begin{eqnarray*}
S=\sum\limits_{i\ge0}u^i\otimes v^i\in\ncs{K}{\calX^*\otimes\calX^*}
&\mbox{and}&S\notin\ncs{K}{\calX}\otimes\ncs{K}{\calX}.
\end{eqnarray*}
\end{remark}

\begin{definition}\label{gpl_p}
A series $S\in\ncs{A}{Y}$ (resp. $\ncs{A}{\calX}$) is said to be 
\begin{enumerate}
\item grouplike for $\Delta_{{\shuffle}_{\phi}}$ (resp. $\Delta_{\shuffle}$ and $\Delta_{\conc}$), if and only if
\begin{enumerate}
\item $\scal{S}{1_{Y^*}}=1_{A}$ (resp. $\scal{S}{1_{c^*}}=1_{A}$),
\item $\Delta_{{\shuffle}_{\phi}}S=S\otimes S$ (resp. $\Delta_{\shuffle}S=S\otimes S$ and $\Delta_{\conc}S=S\otimes S$).
\end{enumerate}

\item primitive for $\Delta_{{\shuffle}_{\phi}}$ (resp. $\Delta_{\shuffle}$ and $\Delta_{\conc}$), if and only if
\begin{eqnarray*}
\Delta_{{\shuffle}_{\phi}}S&=&1_{Y^*}\otimes S+S\otimes1_{Y^*}\\
\mbox{(resp. }\Delta_{\shuffle}S&=&1_{\calX^*}\otimes S+S\otimes1_{\calX^*}\\ \mbox{and }\Delta_{\conc}S&=&1_{\calX^*}\otimes S+S\otimes1_{\calX^*}).
\end{eqnarray*}

\item Let $\calG_{{\shuffle}_{\phi}}^Y$ (resp. $\calG_{\shuffle}^{\calX}$ and $\calG_{\conc}^{\calX}$) denote the set of grouplike series for $\Delta_{{\shuffle}_{\phi}}$ (resp. $\Delta_{\shuffle}$ and $\Delta_{\conc}$) and $\calP_{{\shuffle}_{\phi}}^Y$ (resp. $\calP_{\shuffle}^{\calX}$ and $\calP_{\conc}^{\calX}$) denote the set of primitive series for $\Delta_{{\shuffle}_{\phi}}$ (resp. $\Delta_{\shuffle}$ and $\Delta_{\conc}$).
\end{enumerate}
\end{definition}

\begin{remark}\label{R0}
By \eqref{Dconc} and \eqref{Dshuffle}, any letter $x\in\calX$ is primitive for $\Delta_{\conc}$ and $\Delta_{\shuffle}$. By \eqref{Dstuffle}, the letter $y_1$ is primitive for $\Delta_{{\shuffle}_{\phi}}$.
\end{remark}

\begin{proposition}\label{proposition1}
$\calG_{{\shuffle}_{\phi}}^Y$ (resp. $\calG_{\shuffle}^{\calX}$ and $\calG_{\conc}^{\calX}$) is a group and $\calP_{{\shuffle}_{\phi}}^Y$ (resp. $\calP_{\shuffle}^{\calX}$ and $\calP_{\conc}^{\calX}$) is a Lie algebra.
\end{proposition}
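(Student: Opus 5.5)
The plan is to treat the three coproducts $\Delta\in\{\Delta_{\conc},\Delta_{\shuffle},\Delta_{{\shuffle}_{\phi}}\}$ uniformly. The only properties used are that each is a morphism for concatenation (stated after \eqref{Dconc}, \eqref{Dshuffle}, \eqref{Dstuffle}), extended to series by \eqref{D3}--\eqref{D1}, and that it is counital, with counit $\epsilon(S)=\scal{S}{1_{\calX^*}}$. First I will check that the extensions of $\Delta$ to $\ncs{A}{\calX}\to\ncs{A}{\calX^*\otimes\calX^*}$ remain multiplicative for the (completed) concatenation product on both sides. For words this is given. For series it follows by continuity in the ultrametric topology \eqref{distance}: the coefficient of $u\otimes v$ in $\Delta(SR)$ involves only finitely many words, since $\Delta w$ only contains terms $u\otimes v$ with $\abs{u}+\abs{v}\le\abs{w}$ and, for $\Delta_{\conc}$ and $\Delta_{\shuffle}$, equality holds. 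For $\Delta_{{\shuffle}_{\phi}}$ the dualizability condition of Definition~\ref{phishuffle} guarantees the required finiteness.

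For the group statement, let $S,R$ be grouplike. Multiplicativity gives $\Delta(SR)=(\Delta S)(\Delta R)=(S\otimes S)(R\otimes R)=SR\otimes SR$, and $\scal{SR}{1}=\scal{S}{1}\scal{R}{1}=1$, so $\calG$ is closed under concatenation and contains $1_{\calX^*}$. For inverses, $S$ has constant term $1$, so $S=1_{\calX^*}-T$ with $T$ propre. Then $S^{-1}=\sum_{k\ge0}T^k$ converges, and $\Delta(S^{-1})=(\Delta S)^{-1}=(S\otimes S)^{-1}=S^{-1}\otimes S^{-1}$. Here I use that $\Delta$ is a continuous unital algebra morphism, so it preserves inverses, and that in $\ncs{A}{\calX^*\otimes\calX^*}$ one has $(S\otimes S)(S^{-1}\otimes S^{-1})=1\otimes1$. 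Associativity is inherited from concatenation, so $\calG$ is a group.

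For the Lie algebra statement, primitive series form an $A$-submodule because $\Delta$ is linear and the condition $\Delta P=1\otimes P+P\otimes1$ is linear in $P$. For $P,Q$ primitive, I will expand
$\Delta(PQ)=(1\otimes P+P\otimes1)(1\otimes Q+Q\otimes1)=1\otimes PQ+Q\otimes P+P\otimes Q+PQ\otimes1$. Subtracting the analogous expansion for $QP$, the cross terms cancel and $\Delta[P,Q]=1\otimes[P,Q]+[P,Q]\otimes1$. Antisymmetry and the Jacobi identity are inherited from the commutator bracket on the associative algebra $\ncs{A}{\calX}$.

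The main obstacle I expect is making the multiplicativity of the extended coproduct rigorous on series, particularly for $\Delta_{{\shuffle}_{\phi}}$. For this coproduct the summation in $\Delta(SR)$ must be shown to be locally finite, and the identification of $(S\otimes S)(R\otimes R)$ inside $\ncs{A}{Y^*\otimes Y^*}$ must be justified. I would handle this by computing coefficients $\scal{\Delta S}{u\otimes v}=\scal{S}{u\,{\shuffle}_{\phi}\,v}$ directly. Since $u{\shuffle}_{\phi}v$ is a polynomial, all the sums are finite, and the identities reduce to the algebraic identities on polynomials already available.
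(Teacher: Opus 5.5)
Your argument is sound for $\Delta_{\shuffle}$ and $\Delta_{{\shuffle}_{\phi}}$, which really are concatenation morphisms. There, the coefficientwise check $\scal{\Delta S}{u\otimes v}=\scal{S}{u\,{\shuffle}_{\phi}\,v}$ is the right way to pass to series. One slip: the length inequality you state goes the other way for ${\shuffle}_{\phi}$ (e.g.\ $y_1\otimes y_1$ occurs in $\Delta_{{\shuffle}_{\phi}}y_2$), but finiteness of $\supp(u\,{\shuffle}_{\phi}\,v)$ is all you need anyway. The paper gives no argument beyond citing classical Hopf algebra theory, so spelling this out is fine.

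\textbf{The gap.} You put $\Delta_{\conc}$ into the same uniform scheme, relying on the sentence after \eqref{Dconc} that it is a morphism for concatenation. That sentence is false. For distinct letters $x,y$ one has $\Delta_{\conc}(xy)=1_{\calX^*}\otimes xy+x\otimes y+xy\otimes1_{\calX^*}$. By contrast, $(\Delta_{\conc}x)(\Delta_{\conc}y)$ has the additional term $y\otimes x$; it equals $\Delta_{\shuffle}(xy)$. Accordingly, both of your conclusions fail for $\Delta_{\conc}$ with the operations you use.
\begin{itemize}
\item Grouplike series for $\Delta_{\conc}$ are the $\conc$-characters. The series $x^*$ is one, but $x^*x^*=\sum_{n\ge0}(n+1)x^n$ is not: its coefficient of $x^2$ is $3$, not $2^2$. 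So $\calG_{\conc}^{\calX}$ is not closed under concatenation.
\item $\Delta_{\conc}[x,y]$ contains $x\otimes y-y\otimes x$. So $\calP_{\conc}^{\calX}$, which is exactly the set of planes, is not closed under the concatenation commutator.
\end{itemize}

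\textbf{What is missing} is the correct product for this case. Since $\Delta_{\shuffle}$ is a $\conc$-morphism, dually $(\ncp{A}{\calX},\shuffle,\Delta_{\conc})$ is a bialgebra. That is, $\Delta_{\conc}(S\shuffle R)=(\Delta_{\conc}S)\shuffle(\Delta_{\conc}R)$, with $\shuffle\otimes\shuffle$ on the tensor side, extended to series coefficientwise as you already do. Running your argument with $\shuffle$ in place of $\conc$ gives two facts.
\begin{itemize}
\item $\calG_{\conc}^{\calX}$ is a group for $\shuffle$. Concretely, $(\sum_x\alpha_xx)^*\shuffle(\sum_x\beta_xx)^*=(\sum_x(\alpha_x+\beta_x)x)^*$, with inverse $(-\sum_x\alpha_xx)^*$, as in Proposition \ref{simple}.
\item $\calP_{\conc}^{\calX}$ is a Lie algebra for the $\shuffle$-commutator, which is abelian.
\end{itemize}
So the statement holds for $\Delta_{\conc}$ only once the group law is taken to be shuffle rather than concatenation. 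Your proof needs to split off this case.
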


\begin{proof}
It is classical in theory of Hopf algebra \cite{abe,CartierPatras} (see also \cite{VJM,reutenauer}).
\end{proof}

\begin{definition}\label{dec0}
For ${\shuffle}_{\phi}$ (resp. $\shuffle$ and $\conc$), a series $S\in\ncs{A}{Y}$ (resp. $\ncs{A}{\calX}$) is
\begin{enumerate}
\item a character of $\ncp{A}{Y}$ (resp. $\ncp{A}{\calX}$) if and only if, for any $u$ and $v\in Y^*$ (resp. $\calX^*$), one has
\begin{enumerate}
\item $\scal{S}{1_{Y^*}}=1_A$ (resp. $\scal{S}{1_{\calX^*}}\allowbreak=1_A$),
\item $\scal{S}{u{\shuffle}_{\phi} v}=\scal{S}{u}\scal{S}{v}$
(resp. $\scal{S}{u\shuffle v}=\scal{S}{u}\scal{S}{v}$ and $\scal{S}{uv}=\scal{S}{u}\scal{S}{v}$).
\end{enumerate}

\item an infinitesimal character of $\ncp{A}{Y}$ (resp. $\ncp{A}{\calX}$) if and only if, for any $u$ and $v\in Y^*$ (resp. $\calX^*$), one has
\begin{eqnarray*}
\scal{S}{u{\shuffle}_{\phi} v}&=&\scal{S}{u}\scal{v}{1_{Y^*}}+\scal{u}{1_{Y^*}}\scal{S}{v},\\
\mbox{(resp. }\scal{S}{u\shuffle v}&=&\scal{S}{u}\scal{v}{1_{Y^*}}+\scal{u}{1_{Y^*}}\scal{S}{v},\\
\mbox{and }\scal{S}{uv}&=&\scal{S}{u}\scal{v}{1_{Y^*}}+\scal{u}{1_{Y^*}}\scal{S}{v}).
\end{eqnarray*}
\end{enumerate}
\end{definition}

\begin{proposition}\label{proposition2}
Let $S\in\ncs{A}{\calX}$. 
\begin{enumerate}
\item $S$ is grouplike series for $\Delta_{{\shuffle}_{\phi}}$ (resp. $\Delta_{\shuffle}$ and $\Delta_{\conc}$) if and only if $S$ is a character of $\ncp{A}{Y}$ (resp. $\ncp{A}{\calX}$) for ${\shuffle}_{\phi}$ (resp. $\shuffle$ and $\conc$).

\item $S$ is primitive series for $\Delta_{{\shuffle}_{\phi}}$ (resp. $\Delta_{\shuffle}$ and $\Delta_{\conc}$) if and only if $S$ is an infinitesimal character of $\ncp{A}{Y}$ (resp. $\ncp{A}{\calX}$) for $\shuffle_{\phi}$ (resp. $\shuffle$ and $\conc$). 

\item\label{ree} If $\scal{S}{1_{\calX^*}}=1$ then $S$ is grouplike if and only if $\log S$ is primitive for $\Delta_{{\shuffle}_{\phi}}$ (resp. $\Delta_{\shuffle}$).
\end{enumerate}
\end{proposition}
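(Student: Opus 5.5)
Items (1) and (2) reduce to a coefficientwise comparison under the pairing \eqref{pairing}, extended to $\ncs{A}{\calX^*\otimes\calX^*}$ by $\scal{S\otimes R}{u\otimes v}=\scal{S}{u}\scal{R}{v}$. For any coproduct $\Delta\in\{\Delta_{\conc},\Delta_{\shuffle},\Delta_{{\shuffle}_{\phi}}\}$ and its dual product $\ast\in\{\conc,\shuffle,{\shuffle}_{\phi}\}$, the defining duality $\scal{u\ast v}{w}=\scal{u\otimes v}{\Delta w}$ together with the extensions \eqref{D3}--\eqref{D1} gives, for every series $S$ and all words $u,v$,
\begin{eqnarray*}
\scal{\Delta S}{u\otimes v}=\sum_{w}\scal{S}{w}\scal{u\otimes v}{\Delta w}=\sum_{w}\scal{S}{w}\scal{u\ast v}{w}=\scal{S}{u\ast v}.
\end{eqnarray*}
The sum is finite because $u\ast v$ is a polynomial; dualizability of $\phi$ is used exactly here for ${\shuffle}_{\phi}$. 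Since an element of $\ncs{A}{\calX^*\otimes\calX^*}$ is determined by its coefficients on the basis $\{u\otimes v\}$, the identity $\Delta S=S\otimes S$ is equivalent to $\scal{S}{u\ast v}=\scal{S}{u}\scal{S}{v}$ for all $u,v$. Adding the normalisation $\scal{S}{1}=1$ gives (1). In the same way, $\Delta S=1\otimes S+S\otimes1$ is equivalent to $\scal{S}{u\ast v}=\scal{S}{u}\scal{1}{v}+\scal{1}{u}\scal{S}{v}$, which is (2).

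For (3) I restrict to $\Delta=\Delta_{\shuffle}$ or $\Delta_{{\shuffle}_{\phi}}$. These are $\conc$-morphisms on polynomials, and they are continuous for the topology \eqref{distance}, since they do not decrease length for $\shuffle$ and do not decrease weight for ${\shuffle}_{\phi}$. Hence they extend to continuous $\conc$-morphisms $\ncs{A}{\calX}\to\ncs{A}{\calX^*\otimes\calX^*}$, where the target carries the componentwise concatenation $(a\otimes b)(c\otimes d)=ac\otimes bd$. Write $S=1+T$ with $T$ propre. Then $\Delta(\log S)=\log(\Delta S)$ and $\Delta(\exp L)=\exp(\Delta L)$ for $L$ propre; the series converge because $\Delta T$ has no component on $1\otimes1$.

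If $L=\log S$ is primitive, then $\Delta L=L\otimes1+1\otimes L$, and the two terms commute in the tensor algebra. So $\Delta S=\exp(L\otimes1)\exp(1\otimes L)=(\exp L\otimes1)(1\otimes\exp L)=S\otimes S$. Conversely, if $\Delta S=S\otimes S=(S\otimes1)(1\otimes S)$, again with commuting factors, then $\log(\Delta S)=\log S\otimes1+1\otimes\log S$. This uses the commutative identity $\log(ab)=\log a+\log b$ for commuting unipotent elements, and $\log(S\otimes1)=(\log S)\otimes1$. Therefore $\Delta\log S=\log S\otimes1+1\otimes\log S$.

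The main obstacle is the continuity and morphism step for $\Delta_{{\shuffle}_{\phi}}$ on infinite alphabets $Y$. Taking $\Delta$ of a series needs each coefficient of $\Delta S$ to be a finite sum, and the term $\sum\gamma_{i,j}^ky_i\otimes y_j$ must not create infinite sums in $\Delta(T^n)$. I would settle this with the dualizability hypothesis, $\#\{(x,y):\gamma^z_{x,y}\neq0\}<\infty$. It implies that each coefficient $\scal{\Delta S}{u\otimes v}=\scal{S}{u{\shuffle}_{\phi}v}$ involves only finitely many coefficients of $S$. I would then verify the morphism property $\Delta(SR)=\Delta S\,\Delta R$ coefficientwise through this formula, reducing it to the polynomial case, which is already stated.
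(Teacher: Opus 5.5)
Your proof is correct and is the classical argument that the paper invokes only by citation. It uses the duality $\scal{\Delta S}{u\otimes v}=\scal{S}{u\ast v}$ for items (1)--(2), and $\exp$/$\log$ with the commuting factors $L\otimes1_{\calX^*}$ and $1_{\calX^*}\otimes L$ for item (3), the same reasoning the paper sketches in Remark~\ref{reetheorem}.

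Two attributions are slightly off but harmless. Continuity of $\Delta_{{\shuffle}_{\phi}}$ for the topology \eqref{distance} is a matter of length, not weight: every term of $\Delta_{{\shuffle}_{\phi}}y_k$ has total length $\ge1$. Finiteness of $\scal{S}{u{\shuffle}_{\phi}v}$ comes from $u{\shuffle}_{\phi}v$ being a polynomial, while dualizability is what makes each $\Delta_{{\shuffle}_{\phi}}w$ a finite tensor.
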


\begin{proof}
It is classical in the theory of Hopf algebra \cite{abe,CartierPatras} (see also \cite{VJM,reutenauer}).
\end{proof}

\begin{remark}\label{Ree}
For $\Delta_{\shuffle}$, Item \ref{ree} of Proposition \ref{proposition2} is known as a theorem by Ree \cite{reutenauer} and it is extended for $\Delta_{\stuffle}$ in \cite{VJM}. For $\Delta_{\conc}$, one can see Corollary \ref{Kleene} below.
\end{remark}

\begin{definition}\label{diaser}
Let $S\in\ncs{A}{\calX}$ (resp. $\ncs{A}{\calX}\otimes\ncs{A}{\calX}$).
\begin{enumerate}
\item If $\scal{S}{1_{\calX^*}}=0$ (resp. $\scal{S}{1_{\calX^*}\otimes1_{\calX^*}}=0$) then one defines the Kleene star of $S$ by the following infinite sum
\begin{eqnarray*}
S^*:=1+S+S^2+\cdots.
\end{eqnarray*}

\item In the same way, one also defines the diagonal series by
\begin{eqnarray*}
\calD_{\calX}:=\calM_{\calX}^*,&\mbox{where }&
\calM_{\calX}:=\sum_{t\in\calX}t\otimes t\mbox{ and then }
\calM_{\calX}^*=\sum_{w\in\calX^*}w\otimes w.
\end{eqnarray*}

\item As in \eqref{X+}, one also defines
\begin{eqnarray*}
\calM_{\calX}^+:=\calM_{\calX}^*-1_{\calX^*}\otimes1_{\calX^*}.
\end{eqnarray*}
\end{enumerate}
\end{definition}

\begin{remark}
If $S\in\ncs{A}{\calX}$ such that $\scal{S}{1_{\calX^*}}=0$ then, by \eqref{explog}, $S^*=\exp_{\shuffle}S$.
\end{remark}

For any $S\in\ncs{A}{\calX}$ (resp. $\ncs{A}{\calX}\otimes\ncs{A}{\calX}$) such that $\scal{S}{1_{\calX^*}}=0$ (resp. $\scal{S}{1_{\calX^*}\otimes1_{\calX^*}}=0$), setting $\nabla T:=T-1_{\calX^*}$ (resp. $\nabla T:=T-1_{\calX^*}\otimes1_{\calX^*}$), the Kleene star $S^*$ is the unique\footnote{Solutions obtained by convergent iteration process for a discrete topology as in \eqref{distance}.} solution of the following left and right star equations
\begin{eqnarray}
\nabla T=\calX T&\mbox{and}&\nabla T=T\calX
\end{eqnarray}
and, similarly, the diagonal series $\calD_{\calX}$ is the unique solution of the following left and right star equations ($T$ is unknown series)
\begin{eqnarray}
\nabla T=\calM_{\calX}T&\mbox{and}&\nabla T=T\calM_{\calX}.
\end{eqnarray}

In order to prove Theorem \ref{isomorphy} below, the following product in $\ncp{A}{\calX}\otimes\ncp{A}{\calX}$ (resp. $\ncp{A}{Y}\otimes\ncp{A}{Y}$) will  be also considered, for any $u_i,v_i\in\calX^+$ (resp. $Y^+$), $i=1,2$,
\begin{eqnarray}
(u_1\otimes v_1)(u_2\otimes v_2)&=&(u_1\shuffle u_2)\otimes(u_2v_2),\label{tensor1}\\
(\mbox{resp. }
(u_1\otimes v_1)(u_2\otimes v_2)&=&(u_1{\shuffle}_{\phi}u_2)\otimes(u_2v_2)).\label{tensor2}
\end{eqnarray}

\begin{example}\label{tensor}
For any $k\ge1$, one obtains
\begin{eqnarray*}
\Big(\sum_{w\in\calX^+}w\otimes w\Big)^k&=&\sum_{u_1,\ldots,u_k\in\calX^+}
(u_1{\shuffle}\ldots{\shuffle}u_k)\otimes u_1\ldots u_k\\
\Big(\mbox{resp. }
\Big(\sum_{w\in Y^+}w\otimes w\Big)^k&=&\sum_{u_1,\ldots,u_k\in Y^+}
(u_1{\shuffle}_{\phi}\ldots{\shuffle}_{\phi} u_k)\otimes u_1\ldots u_k\Big).\end{eqnarray*}
\end{example}

\subsection{Various bialgebras}\label{bialgebras}
In the theory of Hopf algebras, the question of primitive elements is an important problem (see Defintions \ref{gpl_p} and \ref{dec0}, Propositions \ref{proposition1} and \ref{proposition2}) and the following CQMM\footnote{CQMM is an abbreviation of P. Cartier, D. Quillen, J. Milnor and J. Moore.} theorem \cite{Lie7,Cartier2,CartierPatras,MilnorMoore} provides necessary and sufficient conditions for a bialgebra to be an enveloping algebra \cite{Lie7,dixmier}.

\begin{theorem}[CQMM theorem, \cite{SLC74}]\label{CQMM}
Let $A$ be an unitary commutative associative $\Q$-algebra and $\calB$ be a noncommutative co-commutative $A$-bialgebra\footnote{Applcations below concern $(\ncp{A}{\calX},\conc,1_{\calX^*},\Delta_{\shuffle})$ and $(\ncp{A}{Y},\conc,1_{Y^*},\Delta_{{\shuffle}_{\phi}})$.}. Let $\calU(\calP_{\times})$ be the enveloping algebra generated by the primitive elements of $\calB$ for $\Delta_{\times}$ (\textit{i.e.} $\calP_{\times}$).

Then the following assertions are equivalent.
\begin{enumerate}
\item $\mathcal{B}$ is isomorphic to the enveloping algebra $\calU(\calP_{\times})$.

\item There is an increasing sequence $\{\calB_n\}$,
$\calB_0=A.1_{\calB}\subset\calB_1\subset\cdots\subset\calB_n\subset\cdots$,
satisfying
\begin{enumerate}
\item $\calB=\bigcup\limits_{n\ge0}\calB_n$,
\item $\forall p,q\in\N,\calB_p\calB_q\subset\calB_{p+q}$,
\item $\forall n\in\N,\Delta_{\times}(\calB_n)\subset\sum\limits_{p+q=n}\calB_p\otimes\calB_q$.
\end{enumerate}
\end{enumerate}
\end{theorem}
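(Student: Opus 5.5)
We prove the two implications separately. Throughout we use that $A\supseteq\Q$, so exponentials and logarithms with factorial denominators make sense.

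\emph{(1)$\Rightarrow$(2).} Suppose $\calB\cong\calU(\calP_{\times})$ as bialgebras. Let $\calB_n$ be the image of $\bigoplus_{k\le n}\calP_{\times}^{\otimes k}$ under the canonical map from the tensor algebra, i.e. the span of products of at most $n$ primitive elements (with $\calB_0=A.1_{\calB}$).
\begin{itemize}
\item Conditions (a) and (b) are immediate from this definition.
\item For (c), since $\Delta_{\times}$ is an algebra morphism and each $p\in\calP_{\times}$ satisfies $\Delta_{\times}p=p\otimes1+1\otimes p$, we get
\begin{eqnarray*}
\Delta_{\times}(p_1\cdots p_n)=\sum_{I\sqcup J=\{1,\ldots,n\}}p_I\otimes p_J .
\end{eqnarray*}
Here $p_I$ and $p_J$ denote the ordered subproducts, which lie in $\calB_{|I|}\otimes\calB_{|J|}$.
\end{itemize}
This direction is routine.

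\emph{(2)$\Rightarrow$(1).} Consider the canonical morphism $\varphi:\calU(\calP_{\times})\to\calB$ induced by the inclusion $\calP_{\times}\subset\calB$, which exists by the universal property of the enveloping algebra. We must show $\varphi$ is bijective.

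Surjectivity, by induction on the filtration degree. We show $\calB_n\subset\mathrm{Im}\,\varphi+\calB_{n-1}$, and in fact $\calB_n\subset\mathrm{Im}\,\varphi$.
\begin{itemize}
\item Take $x\in\calB_n$. Using the counit and (c), the reduced coproduct $\bar\Delta x=\Delta_{\times}x-x\otimes1-1\otimes x$ lies in $\sum_{p+q=n,\ p,q\ge1}\calB_p\otimes\calB_q$.
\item From this one builds the symmetrization/Eulerian projector: $\pi=\log_{*}(\mathrm{Id})$, computed in the convolution algebra $\mathrm{End}(\calB)$. It is well defined because $\mathrm{Id}-\eta\epsilon$ is locally nilpotent for $*$ by (a) and (c): its $(n+1)$-st convolution power kills $\calB_n$.
\item Cocommutativity then gives that $\pi(\calB)\subset\calP_{\times}$.
\item Writing $\mathrm{Id}=\exp_{*}(\pi)=\sum_k\pi^{*k}/k!$, each $x\in\calB_n$ is expressed as a finite sum of symmetrized products of at most $n$ primitive elements. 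Hence $x\in\mathrm{Im}\,\varphi$.
\end{itemize}

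Injectivity. Here we use that $\calU(\calP_{\times})$ is itself filtered and cocommutative, with primitive part exactly $\calP_{\times}$ (Friedrichs-type statement, valid over $\Q$-algebras). A nonzero element of $\ker\varphi$ of minimal filtration degree would have primitive reduced-coproduct components of lower degree, which must vanish. Such an element would therefore be primitive in $\calU(\calP_{\times})$, hence lie in $\calP_{\times}$. But $\varphi$ restricted to $\calP_{\times}$ is the identity, a contradiction.

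The main obstacle is this injectivity/primitive-part step over a general $\Q$-algebra $A$ rather than a field. There, Poincar\'e–Birkhoff–Witt may fail unless $\calP_{\times}$ is suitably free. In the intended applications ($\ncp{A}{\calX}$ with $\Delta_{\shuffle}$, and $\ncp{A}{Y}$ with $\Delta_{{\shuffle}_{\phi}}$) this is supplied by free modules with explicit bases, e.g. Lyndon-type bases. To handle it in general, I would instead prove injectivity via the projector $\pi$: it gives the inverse $\calB\to\calU(\calP_{\times})$,
\begin{eqnarray*}
x\mapsto\sum_k\frac{1}{k!}\,\mu^{(k)}\circ\pi^{\otimes k}\circ\Delta^{(k)}(x),
\end{eqnarray*}
and one checks that it is a left inverse of $\varphi$ using only that $\Delta$ is an algebra morphism.
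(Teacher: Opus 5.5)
The paper does not prove this statement; it quotes it from \cite{SLC74}. So I judge your argument on its own. Your route, via $\pi=\log_*(\mathrm{Id})$ and $\mathrm{Id}=\exp_*(\pi)$, is the standard one and is the same mechanism the paper uses in Theorem~\ref{isomorphy}. Your (1)$\Rightarrow$(2) is correct, and so is the surjectivity half of (2)$\Rightarrow$(1). Condition (c) together with $\calB_0=A.1_{\calB}$ makes $(\mathrm{Id}-\eta\epsilon)^{*(n+1)}$ vanish on $\calB_n$. Cocommutativity gives $\Delta_{\times}\pi(x)=\pi(x)\otimes1+1\otimes\pi(x)$. Then $\mathrm{Id}=\exp_*(\pi)$ writes every element as a sum of products of primitives.

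\textbf{The gap is injectivity.} Let $\iota:\calP_{\times}\to\calU(\calP_{\times})$ be the canonical map and $\tilde\pi:\calB\to\calP_{\times}$ the corestriction of $\pi$. Your $\pi^{\otimes k}$ must be read as $\iota^{\otimes k}\circ\tilde\pi^{\otimes k}$, otherwise the product cannot be taken in $\calU(\calP_{\times})$. Your claim that $\psi=\exp_*(\iota\circ\tilde\pi)$ is a left inverse of $\varphi$ ``using only that $\Delta$ is an algebra morphism'' is not correct.

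\emph{Why the check is not routine.} Since $\varphi$ is a coalgebra morphism, $\psi\circ\varphi=\exp_*(\iota\tilde\pi\varphi)$. Also $\mathrm{Id}=\exp_*(\pi_{\calU})$ with $\pi_{\calU}=\log_*(\mathrm{Id}_{\calU(\calP_{\times})})$. So you must show $\iota\circ\tilde\pi\circ\varphi=\pi_{\calU}$. Multiplicativity of $\Delta$ only gives, for primitive $q_1,\ldots,q_m$ in any such bialgebra, $\pi(q_1\cdots q_m)=E_m(q_1,\ldots,q_m)$ with the universal polynomial
\[
E_m=\sum_{j\ge1}\frac{(-1)^{j-1}}{j}\sum_{f}t_{f^{-1}(1)}\cdots t_{f^{-1}(j)}\in\Q\langle t_1,\ldots,t_m\rangle .
\]
Here $f$ runs over surjections $\{1,\ldots,m\}\to\{1,\ldots,j\}$, and $t_I$ is the product of the $t_i$, $i\in I$, in increasing order. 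The required identity is therefore $\iota\big(E_m(p_1,\ldots,p_m)\big)=E_m(\iota p_1,\ldots,\iota p_m)$ for $p_i\in\calP_{\times}$. But $E_m(p_1,\ldots,p_m)$ is computed with the associative product of $\calB$. The map $\iota$ is only a Lie morphism defined on $\calP_{\times}$, so it does not respect that product.

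\emph{The missing lemma.} The identity holds only because $E_m$ is a Lie polynomial, i.e.\ the first Eulerian idempotent of the free algebra is a Lie idempotent. Concretely, $E_m(t_1,\ldots,t_m)=\pi(t_1\cdots t_m)$ is $\Delta_{\shuffle}$-primitive in $\Q\langle t_1,\ldots,t_m\rangle$, hence a Lie polynomial by Friedrichs' criterion over the field $\Q$. This is the real content of injectivity, and it is absent from your plan.

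Your first sketch did invoke the right input: the primitives of $\calU(\calP_{\times})$ come from $\calP_{\times}$. However, its mechanism fails over $A$. Concluding that the lower-degree components vanish needs $\ker(\varphi\otimes\varphi)\cap(\calU_{n-1}\otimes\calU_{n-1})=0$, which is a field or flatness property.

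\textbf{How to close it.} Once you have $\pi_{\calU}(\calU(\calP_{\times}))\subseteq\iota(\calP_{\times})$ (equivalently, that each $E_m$ is Lie), the proof works over any $\Q$-algebra and needs no PBW. Here $\pi_{\calU}$ is well defined thanks to the filtration from your (1)$\Rightarrow$(2). You can then conclude through $\iota\tilde\pi\varphi=\pi_{\calU}$ and your $\psi$. Or you can argue directly on the kernel:
\begin{enumerate}
\item Since $\varphi$ is a bialgebra morphism, $\varphi\circ\pi_{\calU}=\pi\circ\varphi$.
\item Since $\varphi\circ\iota$ is the inclusion, $\iota(\calP_{\times})\cap\ker\varphi=0$. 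Together with step 1 this gives $\pi_{\calU}(\ker\varphi)=0$.
\item Since $\varphi$ is onto, right exactness of $\otimes$ shows $\ker\varphi^{\otimes k}$ is spanned by tensors with a factor in $\ker\varphi$.
\item Hence $\pi_{\calU}^{*k}(u)=0$ for $u\in\ker\varphi$ and $k\ge1$, so $u=\exp_*(\pi_{\calU})(u)=\epsilon(u)1=0$.
\end{enumerate}
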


Let us consider the following bialgebras and their duals
\begin{eqnarray}\label{Bialgebras}
\calH_{\shuffle}(\calX)&:=&(\ncp{A}{\calX},\conc,1_{\calX^*},\Delta_{\shuffle}),\label{Bialgebras1}\\
\calH_{{\shuffle}_{\phi}}(Y)&:=&(\ncp{A}{Y},\conc,1_{Y^*},\Delta_{{\shuffle}_{\phi}})\label{Bialgebras2},\\
\calH_{\shuffle}^{\vee}(\calX)&:=&(\ncp{A}{\calX},{\shuffle},1_{\calX^*},\Delta_{\conc}),\label{dualbialgebras1}\\
\calH_{{\shuffle}_{\phi}}^{\vee}(Y)&:=&(\ncp{A}{Y},{{\shuffle}_{\phi}},1_{Y^*},\Delta_{\conc})\label{dualbialgebras2}.
\end{eqnarray}
By Propostion \ref{proposition1} and Theorem \ref{CQMM}, the enveloping algebra $\mathcal{U}(\calP_{\shuffle}^{\calX})$ (resp. $\mathcal{U}(\calP_{{\shuffle}_{\phi}}^Y)$) is isomorphic to the $A$-module associated to the bialgebras $\calH_{\shuffle}(\calX)$ (resp. $\calH_{{\shuffle}_{\phi}}(Y)$).

\begin{theorem}[Eulerian idempotent]\label{isomorphy}
Let $A$ be a $\Q$-algebra and $\pi_1:\ncp{A}{Y}\longrightarrow\ncp{A}{Y}$ be  the linear map such that $\pi_1(1_{Y^*})=1_{Y^*}$ and the image of $w\in Y^+$ is given by
\begin{eqnarray*}
\pi_1w=w+\sum_{k\ge2}\frac{(-1)^{k-1}}{k}
\sum_{u_1,\ldots,u_k\in Y^+}
\scal{w}{u_1{\shuffle}_{\phi}\ldots{\shuffle}_{\phi}u_k}u_1\ldots u_k.
\end{eqnarray*}
Then one has
\begin{enumerate}
\item $\Delta_{{\shuffle}_{\phi}}\pi_1w=1_{Y^*}\otimes\pi_1w+\pi_1w\otimes1_{Y^*}$ and $\log\calD_Y=(\mathrm{Id}\otimes\pi_1)\calD_Y$.

\item For any $w\in Y^+$,
\begin{eqnarray*}
w=\sum_{k\ge0}\frac1{k!}\sum_{u_1,\ldots,u_k\in Y^+}\scal{w}{u_1{\shuffle}_{\phi}\cdots{\shuffle}_{\phi}u_k}(\pi_1u_1)\cdots(\pi_1u_k).
\end{eqnarray*}

\item The endomorphism $\varphi_{\pi_1}:(\ncp{A}{Y},\conc,1_{Y^*})\longrightarrow(\ncp{A}{Y},\conc,1_{Y^*})$, mapping $y_k$ to $\pi_1y_k$, is an isomorphism of bialgebras between $\calH_{\shuffle}(Y)$ and $\calH_{{\shuffle}_{\phi}}(Y)$.

\item $\Delta_{{\shuffle}_{\phi}}\circ\varphi_{\pi_1}=(\varphi_{\pi_1}\otimes\varphi_{\pi_1})\circ\Delta_{{\shuffle}_{\phi}}$.
\end{enumerate}
\end{theorem}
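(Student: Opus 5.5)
The plan is to work entirely with the diagonal series $\calD_Y=\sum_{w\in Y^*}w\otimes w$, viewed in the completed algebra $\ncs{A}{Y^*\otimes Y^*}$. On the left factor we use the product ${\shuffle}_{\phi}$ and on the right factor $\conc$, as in \eqref{tensor2}. We write $\calD_Y=1_{Y^*}\otimes1_{Y^*}+T$ with $T=\sum_{w\in Y^+}w\otimes w$.

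By Example \ref{tensor}, $T^k=\sum_{u_1,\ldots,u_k\in Y^+}(u_1{\shuffle}_{\phi}\cdots{\shuffle}_{\phi}u_k)\otimes u_1\cdots u_k$. Expanding $\log(1+T)=\sum_{k\ge1}\frac{(-1)^{k-1}}{k}T^k$ and regrouping by the word $w$ in the left factor, via $u_1{\shuffle}_{\phi}\cdots{\shuffle}_{\phi}u_k=\sum_w\scal{w}{u_1{\shuffle}_{\phi}\cdots{\shuffle}_{\phi}u_k}w$, gives $\log\calD_Y=\sum_{w\in Y^+}w\otimes\pi_1w$. This is the identity $\log\calD_Y=(\mathrm{Id}\otimes\pi_1)\calD_Y$ of item (1), with the convention that the constant term of $\pi_1$ is dropped inside the log. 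Each coefficient is a finite sum, since $\phi$ is dualizable and $u_1{\shuffle}_{\phi}\cdots{\shuffle}_{\phi}u_k$ has only finitely many $k$ with $\scal{w}{\cdot}\ne0$ by length/weight.

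Similarly, $\calD_Y=\exp(\log\calD_Y)$ expands as $\sum_k\frac1{k!}\big(\sum_u u\otimes\pi_1u\big)^k=\sum_k\frac1{k!}\sum_{u_i}(u_1{\shuffle}_{\phi}\cdots{\shuffle}_{\phi}u_k)\otimes\pi_1u_1\cdots\pi_1u_k$. Comparing coefficients of $w\otimes\cdot$ gives item (2).

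For primitivity in item (1), we observe that $\calD_Y$ is grouplike for $\mathrm{Id}\otimes\Delta_{{\shuffle}_{\phi}}$ in the appropriate sense. Indeed, $(\mathrm{Id}\otimes\Delta_{{\shuffle}_{\phi}})\calD_Y=\sum_{u,v}(u{\shuffle}_{\phi}v)\otimes u\otimes v$, and this is the product of the two copies $\calD_Y^{(1,2)}$ and $\calD_Y^{(1,3)}$ in the algebra whose first factor carries ${\shuffle}_{\phi}$; it holds by duality $\scal{u{\shuffle}_{\phi}v}{w}=\scal{uv}{\Delta_{{\shuffle}_{\phi}}w}$. These two copies commute because ${\shuffle}_{\phi}$ is commutative and they act on different slots. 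Since $\mathrm{Id}\otimes\Delta_{{\shuffle}_{\phi}}$ is a continuous algebra morphism, the log of a product of commuting grouplike factors is the sum of their logs. This yields $\sum_w w\otimes\Delta_{{\shuffle}_{\phi}}\pi_1w=\sum_w w\otimes(1\otimes\pi_1w+\pi_1w\otimes1)$. Extracting the coefficient of $w$ in the first factor, legitimate since the words form a basis, gives primitivity of $\pi_1w$. Making this Ree-type argument precise in the completed triple tensor setting, especially the commutation and the morphism property of $\log$, is the main obstacle. If it proves awkward, the fallback is to verify that the left-hand coefficient map $S\mapsto\scal{S}{w}$ and a direct induction on $k$ give the same identity.

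For item (3), $\varphi_{\pi_1}$ is the conc-morphism with $y_k\mapsto\pi_1y_k$. The polynomial $\pi_1y_k$ equals $y_k$ plus terms of length $\ge2$ and the same weight. The reason is that $\scal{y_k}{u_1{\shuffle}_{\phi}\cdots{\shuffle}_{\phi}u_k}$ is nonzero only for letters $u_i$ whose $\phi$-combination gives $y_k$, and ${\shuffle}_{\phi}$ preserves weight when $\gamma$ is graded, as in \eqref{Dstuffle} where $i+j=k$. Hence $\varphi_{\pi_1}$ is unitriangular with respect to length within each weight, and therefore bijective. It is compatible with the coproducts because generators go to primitives: $\Delta_{{\shuffle}_{\phi}}\varphi_{\pi_1}(y_k)=(\varphi_{\pi_1}\otimes\varphi_{\pi_1})\Delta_{\shuffle}(y_k)$, and both sides are conc-morphisms, so this extends to all words. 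This is the isomorphism $\calH_{\shuffle}(Y)\to\calH_{{\shuffle}_{\phi}}(Y)$ of item (3), and item (4) follows from it.
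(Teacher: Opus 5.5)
Your proof is correct. The computation of $\log\calD_Y$ and the proof of item~(2) coincide with the paper's, but the two substantive steps take a different route.

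\emph{Primitivity of $\pi_1w$.} The paper takes a $\Delta_{{\shuffle}_{\phi}}$-grouplike series $S$. It invokes Ree's theorem (Proposition~\ref{proposition2}, item~3) to conclude that $\log S=\sum_w\scal{S}{w}\pi_1w$ is primitive, and then identifies coefficients of the scalars $\scal{S}{w}$. For a single $S$ this last step tacitly requires a supply of grouplike series whose coefficients separate words. You instead run the Ree-type argument on the universal grouplike element itself, via $(\mathrm{Id}\otimes\Delta_{{\shuffle}_{\phi}})\calD_Y=\calD_Y^{(1,2)}\calD_Y^{(1,3)}$. The coefficients you then extract are basis words, so the extraction is immediate, and nothing is taken as a black box. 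The step you call the main obstacle is harmless, so no fallback is needed:
\begin{itemize}
\item $\mathrm{Id}\otimes\Delta_{{\shuffle}_{\phi}}$ is a continuous unital algebra morphism, because $\Delta_{{\shuffle}_{\phi}}$ is a weight-preserving morphism for concatenation;
\item the maps $X\mapsto X^{(1,2)}$ and $X\mapsto X^{(1,3)}$ are likewise continuous unital algebra morphisms;
\item the two copies commute precisely because ${\shuffle}_{\phi}$ is commutative.
\end{itemize}

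\emph{Bijectivity of $\varphi_{\pi_1}$.} The paper gets surjectivity from item~(2) and injectivity from triangularity, using a formula for $\pi_1y_s$ that is really the stuffle case. Your unitriangularity on each finite-rank weight component gives both at once, for any graded $\phi$.

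Two precisions.
\begin{itemize}
\item Finiteness of the sums defining $\pi_1w$ comes from weight, not length, since $\phi$ merges letters. For a merely dualizable $\phi$ such as $\phi(y_i,y_j)=y_{\max(i,j)}$ one gets $\pi_1y_1=\sum_{k\ge1}(-1)^{k-1}y_1^k/k$, which is not a polynomial. So the grading $i+j=k$ of \eqref{Dstuffle} is a genuine hypothesis, implicit in the paper as well.
\item What your argument proves for item~(4) is $\Delta_{{\shuffle}_{\phi}}\circ\varphi_{\pi_1}=(\varphi_{\pi_1}\otimes\varphi_{\pi_1})\circ\Delta_{\shuffle}$, which matches the diagram in the paper's proof. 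The printed version, with $\Delta_{{\shuffle}_{\phi}}$ on both sides, would force $\Delta_{\shuffle}=\Delta_{{\shuffle}_{\phi}}$ because $\varphi_{\pi_1}\otimes\varphi_{\pi_1}$ is bijective. You should state the corrected form rather than say item~(4) follows.
\end{itemize}
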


\begin{proof}
\begin{enumerate}
\item With the notations in Definition \ref{diaser}, as in \cite{VJM}, since $\calD_Y=1+\calM_Y^+$ then expanding $\log\calD_Y=\log(1+\calM_Y^+)$ in $\calH_{{\shuffle}_{\phi}}(Y)$, one has
\begin{eqnarray*}
\log\calD_Y=\sum_{k\ge1}\frac{(-1)^{k-1}}{k}\Big(\sum_{w\in Y^+}w\otimes w\Big)^k.
\end{eqnarray*}
Using \eqref{tensor2} and the calulations in Example \ref{tensor}, one obtains
\begin{eqnarray*}
\log\calD_Y
=\sum_{w\in Y^+}w\otimes\underbrace{\sum_{k\ge1}\frac{(-1)^{k-1}}{k}
\sum_{u_1,\ldots,u_k\in Y^+}
\scal{w}{u_1{\shuffle}_{\phi}\ldots{\shuffle}_{\phi} u_k}u_1\ldots u_k}_{=\pi_1w}.
\end{eqnarray*}
It follows that $\log\calD_Y=(\mathrm{Id}\otimes\pi_1)\calD_Y$. 

Now, let $S$ be a grouplike series for $\Delta_{{\shuffle}_{\phi}}$ such that, viewed as tensor products of ismorphisms of algebras, $S\otimes\mathrm{Id}$ and $S\otimes\pi_1$ act on the diagonal series $\calD_Y$ as follows
\begin{eqnarray*}
S=&(S\otimes\mathrm{Id})\calD_Y&=\sum_{w\in Y^*}\scal{S}{w}w,\cr
\log S=&(S\otimes\pi_1)\calD_Y&=\sum_{w\in Y^*}\scal{S}{w}\pi_1w. 
\end{eqnarray*}
Since $\log S$ is primitive for $\Delta_{{\shuffle}_{\phi}}$ (see Proposition \ref{proposition2}) then
\begin{eqnarray*}
\Delta_{{\shuffle}_{\phi}}(\log S)=1_{Y^*}\otimes(\log S)+(\log S)\otimes1_{Y^*}.
\end{eqnarray*}
On the right side, expanding $\log S$ as above, one has
\begin{eqnarray*}
1_{Y^*}\otimes(\log S)=1_{Y^*}\otimes\Big(\sum_{w\in Y^*}\scal{S}{w}\pi_1w\Big)=\sum_{w\in Y^*}\scal{S}{w}(1_{Y^*}\otimes\pi_1w),\cr
(\log S)\otimes1_{Y^*}=\Big(\sum_{w\in Y^*}\scal{S}{w}\pi_1w\Big)\otimes1_{Y^*}=\sum_{w\in Y^*}\scal{S}{w}(\pi_1w\otimes1_{Y^*}).
\end{eqnarray*}
On the left side, by \eqref{D1}, one also has
\begin{eqnarray*}
\Delta_{{\shuffle}_{\phi}}(\log S)=\sum_{w\in Y^*}\scal{S}{w}\Delta_{{\shuffle}_{\phi}}(\pi_1w).
\end{eqnarray*}
Thus, identifying term by term, one obtains finally
\begin{eqnarray*}
\Delta_{{\shuffle}_{\phi}}(\pi_1w)=1_{Y^*}\otimes\pi_1w+\pi_1w\otimes1_{Y^*}.
\end{eqnarray*}

\item Since $\calD_Y=\exp(\log\calD_Y)$ then, using \eqref{tensor2} and the previous item, one deduces the expected result as follows
\begin{eqnarray*}
\calD_Y&=&\sum_{k\ge0}\frac1{k!}\Big(\sum_{w\in Y^+}w\otimes\pi_1w\Big)^k\cr
&=&\sum_{k\ge0}\frac1{k!}\sum_{u_1,\cdots,u_k\in Y^+}
(u_1{\shuffle}_{\phi}\cdots{\shuffle}_{\phi}u_k)\otimes(\pi_1u_1)\cdots(\pi_1u_k)\cr
&=&\sum_{w\in Y^+}w\otimes\sum_{k\ge0}\frac1{k!}\sum_{u_1,\cdots,u_k\in Y^+}\scal{w}{u_1{\shuffle}_{\phi}\cdots{\shuffle}_{\phi}u_k}(\pi_1u_1)\cdots(\pi_1u_k).
\end{eqnarray*}

\item By the previous item, for each letter $y_s\in Y$, one obtains
\begin{eqnarray*}
y_s
&=&\sum_{k\ge1}\frac{1}{k!}\sum_{s_1+\cdots+s_k=s}(\pi_1y_1)\cdots(\pi_1y_k)\cr
&=&\sum_{k\ge1}\frac{1}{k!}\sum_{s_1+\cdots+s_k=s}\varphi_{\pi_1}(y_1)\cdots\varphi_{\pi_1}(y_k)\cr
&=&\sum_{k\ge1}\frac{1}{k!}\sum_{s_1+\cdots+s_k=s}\varphi_{\pi_1}(y_1\cdots y_k)
\end{eqnarray*}
meanning that the endomorphism $\varphi_{\pi_1}$ is surjective.
By definition of the linear map $\pi_1$, one also has
\begin{eqnarray*}
\varphi_{\pi_1}(y_s)=\pi_1y_s=y_s+\sum_{k\ge2}\frac{(-1)^{k-1}}{k}\sum_{j_1+\cdots+j_k=s}y_{j_1}\cdots y_{j_k}.
\end{eqnarray*}
Then, for any $w=y_{s_1}\cdots y_{s_r}\in Y^+$, one obtains
\begin{eqnarray*}
\varphi_{\pi_1}(w)=y_{s_1}\cdots y_{s_r}+
\sum_{k_1,\cdots,k_r\ge2}\frac{(-1)^{k_1+\cdots+k_r-r}}{k_1\cdots k_r}
\sum_{{j_{s_1,k_1}+\cdots+j_{s_1,k_r}=s_1}
\atop{\cdots\atop{j_{s_r,k_1}+\cdots+j_{s_r,k_r}=s_r}}}\\
(y_{j_{s_1,k_1}}\cdots y_{j_{s_1,k_1}})\cdots(y_{j_{s_r,k_r}}\cdots y_{j_{s_r,k_r}})
\end{eqnarray*}
meaning also that the $\varphi_{\pi_1}(w)$ is lower triangular:
\begin{eqnarray*}
\varphi_{\pi_1}(w)=w+\sum_{u<w,(u)=(w)}c_wu,&\mbox{for}&c_u\in A,
\end{eqnarray*}
and then $\varphi_{\pi_1}$ is injective.
Finally, $\varphi_{\pi_1}$ is an isomorphism of bialgebras.

\item The following diagram commutes
\begin{center}
\begin{tikzcd}[column sep=10em]
\ncp{A}{Y}\ar[hook]{r}{\Delta_{\shuffle}}\ar[swap]{d}{\varphi_{\pi_1}}& 
\ncp{A}{Y}\otimes\ncp{A}{Y}\ar[]{d}{\varphi_{\pi_1}\otimes\varphi_{\pi_1}}\\
\ncp{A}{Y}
\ar[hook]{r}{\Delta_{{\shuffle}_{\phi}}}& 
\ncp{A}{Y}\otimes\ncp{A}{Y}.
\end{tikzcd}
\end{center}
and as each member is an a morphism from $\ncp{A}{Y}$ to $\ncp{A}{Y}\otimes\ncp{A}{Y}$ then it suffices to prove the result on the letter $y_l$
\begin{eqnarray*}
\Delta_{{\shuffle}_{\phi}}(\varphi_{\pi_1}(y_l))
&=&\Delta_{{\shuffle}_{\phi}}\pi_1y_l\\
&=&(\pi_1y_l\otimes)1_{Y^*}+1_{Y^*}\otimes(\pi_1y_l)\\
&=&\varphi_{\pi_1}(y_l)\otimes1_{Y^*}+1_{Y^*}\otimes\varphi_{\pi_1}(y_l)\\
&=&(\varphi_{\pi_1}\otimes\varphi_{\pi_1})(y_l\otimes1_{Y^*}+1_{Y^*}\otimes y_l)\\
&=&(\varphi_{\pi_1}\otimes\varphi_{\pi_1})(\Delta_{{\shuffle}_{\phi}}y_l).
\end{eqnarray*}
\end{enumerate}
\end{proof}

Now, applying Theorems \ref{CQMM}--\ref{isomorphy}, let $\calX$ be equipped the following total orders
\begin{eqnarray}\label{orders}
x_0\prec\cdots\prec x_m&\mbox{and}&y_1\succ\cdots y_k\succ y_{k+1}\succ\cdots,
\end{eqnarray}
for which any $w\in\calX^+$ is a Lyndon word if it is is strictly smaller in lexicographic order than all of its rotations \cite{lothaire,viennotgerard}. Or equivalently, $w$ is a Lyndon word if and only if it is lexicographically strictly smaller than any of its propre suffixes that is, for any $u,v\in\calX^+$ such that $w=uv$, one has $w\prec v$ \cite{lothaire,viennotgerard}. The set of Lyndon words over $\calX$ is denoted by $\Lyn\calX$.
Any pair of Lyndon words $(l_1,l_2)$ is called the standard factorization of $l\in\Lyn\calX$, and is denoted by $st(l)$, if $l=l_1l_2$ and $l_2$ is the longest nontrivial propre right factor of $l$ or, equivalently, its smallest such (for the lexicographic ordering) \cite{lothaire}. According to a Radford's theorem (see \cite{reutenauer}), $\Lyn\calX$ forms a pure transcendence basis\footnote{\textit{I.e.} the $\{l\}_{l\in\Lyn\calX}$ are transcendent over $A$ and they are $A$-algebraically independent.} of the $A$-shuffle algebra $(\ncp{A}{\calX},\shuffle,1_{\calX^*})$.

If $A$ is a $\Q$-algebra then one classically endows $\ncp{A}{\calX}$ with the linear basis $\{P_w\}_{w\in \calX^*}$, expanded by the PBW theorem \cite{Lie7} after any basis $\{P_l\}_{l\in \Lyn\calX}$ of $\ncp{\calL ie_{A}}{\calX}$ and its dual basis $\{S_w\}_{w\in\calX^*}$ (containing the pure transcendence basis $\{S_l\}_{l\in\Lyn\calX}$ of the $A$-shuffle algebra) \cite{dixmier,reutenauer}. According to the lexicographical order on $\calX^*$, these dual bases $\{P_w\}_{w\in\calX^*}$ and $\{S_w\}_{w\in\calX^*}$ are homogeneous in weight\footnote{For ${\calX}=X$ or $=Y$ the corresponding monoids are equipped with length functions, for $X$ we consider the length of words and for $Y$ the length is given by the weight $\ell(y_{i_1}\ldots y_{i_n})=i_1+\ldots+i_n$. This naturally induces a grading of $\ncp{A}{{\calX}}$ and $\ncp{\calL ie_{A}}{{\calX}}$ in free modules of finite dimensions. For general ${\calX}$, we consider the fine grading \cite{reutenauer} \textit{i.e.} the grading by all partial degrees which, as well, induces a grading of $\ncp{A}{{\calX}}$ and $\ncp{\calL ie_{A}}{{\calX}}$ in free modules of finite dimensions.} \cite{reutenauer}.

These dual bases of polynomials $\{P_w\}_{w\in\calX^*}$ and $\{S_w\}_{w\in\calX^*}$, homogeneous in weight, can be constructed recursively as follows \cite{lothaire,viennotgerard}
\begin{eqnarray}\label{P_w}
\left\{\begin{array}{rclll}
P_x&=&x,
&\mbox{for }x\in\calX,\\
P_l&=&[P_{l_1},P_{l_2}],
&{\displaystyle\mbox{for }l=yl'\in\Lyn\calX\setminus\calX\atop\displaystyle st(l)=(l_1,l_2),}\\
P_w&=&P_{l_1}^{i_1}\ldots P_{l_k}^{i_k},
&{\displaystyle\mbox{for }w=l_1^{i_1}\ldots l_k^{i_k},\mbox{ with }l_1,\ldots,\atop\displaystyle l_k\in\Lyn\calX,l_1\succ\ldots\succ l_k.}
\end{array}\right.
\end{eqnarray}
and then by duality \cite{reutenauer},
\begin{eqnarray}\label{S_w}
\left\{\begin{array}{rclll}
S_x&=&x
&\mbox{for }x\in\calX,\\
S_l&=&yS_{l'},
&{\displaystyle\mbox{for }l=yl'\in\Lyn\calX\setminus\calX\atop\displaystyle st(l)=(l_1,l_2),}\\
S_w&=&\displaystyle\frac{S_{l_1}^{\shuffle i_1}\shuffle\ldots\shuffle S_{l_k}^{\shuffle i_k}}{i_1!\ldots i_k!},
&{\displaystyle\mbox{for }w=l_1^{i_1}\ldots l_k^{i_k},\mbox{ with }l_1,\ldots,\atop\displaystyle l_k\in\Lyn\calX,l_1\succ\ldots\succ l_k.}
\end{array}\right.
\end{eqnarray}

\begin{example}[\cite{hoangjacoboussous}]
For $X=\{x_0,x_1\}$, one has
$$\begin{array}{|c|c|c|}
\hline
l&P_l&S_l\\
\hline
x_0&x_0&x_0\\
x_1&x_1&x_1\\
x_0x_1&[x_0,x_1]&x_0x_1\\
x_0^2x_1&[x_0,[x_0,x_1]]&x_0^2x_1\\
x_0x_1^2&[[x_0,x_1],x_1]&x_0x_1^2\\
x_0^3x_1&[x_0,[x_0,[x_0,x_1]]]&x_0^3x_1\\
x_0^2x_1^2&[x_0,[[x_0,x_1],x_1]]&x_0^2x_1^2\\
x_0x_1^3&[[[x_0,x_1],x_1],x_1]&x_0x_1^3\\
x_0^4x_1&[x_0,[x_0,[x_0,[x_0,x_1]]]]&x_0^4x_1\\
x_0^3x_1^2&[x_0,[x_0,[[x_0,x_1],x_1]]]&x_0^3x_1^2\\
x_0^2x_1x_0x_1&[[x_0,[x_0,x_1]],[x_0,x_1]]&2x_0^3x_1^2+x_0^2x_1x_0x_1\\
x_0^2x_1^3&[x_0,[[[x_0,x_1],x_1],x_1]]&x_0^2x_1^3\\
x_0x_1x_0x_1^2&[[x_0,x_1],[[x_0,x_1],x_1]]&3x_0^2x_1^3+x_0x_1x_0x_1^2\\
x_0x_1^4&[[[[x_0,x_1],x_1],x_1],x_1]&x_0x_1^4\\
{x_0^5}x_1&[x_0,[x_0,[x_0,[x_0,[x_0,x_1]]]]]&x_0^5x_1\\\
x_0^4x_1^2&[x_0,[x_0,[x_0,[[x_0,x_1],x_1]]]]&x_0^4x_1^2\\
x_0^3x_1x_0x_1&[x_0,[[x_0,[x_0,x_1]],[x_0,x_1]]]&2x_0^4x_1^2+x_0^3x_1x_0x_1\\
x_0^3x_1^3&[x_0,[x_0,[[[x_0,x_1],x_1],x_1]]]&x_0^3x_1^3\\
x_0^2x_1x_0x_1^2&[x_0,[[x_0,x_1],[[x_0,x_1],x_1]]]&3x_0^3x_1^3+x_0^2x_1x_0x_1^2\\
x_0^2x_1^2x_0x_1&[[x_0,[[x_0,x_1],x_1]],[x_0,x_1]]&6x_0^3x_1^3+3x_0^2x_1x_0x_1^2+x_0^2x_1^2x_0x_1\\
x_0^2x_1^4&[x_0,[[[[x_0,x_1],x_1],x_1],x_1]]&x_0^2x_1^4\\
x_0x_1x_0x_1^3&[[x_0,x_1],[[[x_0,x_1],x_1],x_1]]&4x_0^2x_1^4+x_0x_1x_0x_1^3\\
x_0x_1^5&[[[[[x_0,x_1],x_1],x_1],x_1],x_1]&x_0x_1^5\\
\hline
\end{array}$$
\end{example}

$\Lyn Y$ forms a pure transcendence basis for $(\ncp{A}{Y},{\shuffle}_{\phi},1_{Y^*})$ \cite{siblings} and similar to the $\stuffle$-bialgebra \cite{VJM}, $\calP_{{\shuffle}_{\phi}}^Y$ is also endowed the linear basis $\{\Pi_w\}_{w\in Y^*}$, expanded by decreasing PBW basis after any basis $\{\Pi_l\}_{l\in \Lyn Y}$, homogeneous in weight, and its dual basis $\{\Sigma_w\}_{w\in Y^*}$ (containing the pure transcendence basis $\{\Sigma_l\}_{l\in\Lyn Y}$ of the $\phi$-deformations shuffle $A$-algebra) \cite{siblings}.

By Theorem \ref{isomorphy}, the bases of homogeneous in weight polynomials $\{\Pi_w\}_{w\in Y^*}$ and $\{\Sigma_w\}_{w\in Y^*}$ are images, respectively, of $\{P_w\}_{w\in Y^*}$ and $\{S_w\}_{w\in Y^*}$, by $\varphi_{\pi_1}$ and by the adjoint mapping of its inverse).

Algorithmically and similarly as in \cite{VJM,reutenauer}, these dual bases of polynomials can be constructed directly and recursively a follows
\begin{eqnarray}\label{Pi_w}
\left\{\begin{array}{rclll}
\Pi_{y_s}&=&\pi_1(y_s),
&\mbox{for }y_s\in Y,\\
\Pi_{l}&=&[\Pi_{l_1},\Pi_{l_2}],
&{\displaystyle\mbox{for }l\in\Lyn Y\setminus Y\atop\displaystyle st(l)=(l_1,l_2),}\\
\Pi_{w}&=&\Pi_{l_1}^{i_1}\ldots\Pi_{l_k}^{i_k},
&{\displaystyle\mbox{for }w=l_1^{i_1}\ldots l_k^{i_k},\mbox{ with }l_1,\ldots,\atop\displaystyle l_k\in\Lyn Y,l_1\succ\ldots\succ l_k}
\end{array}\right.
\end{eqnarray}
and then by duality,
\begin{eqnarray}\label{Sigma_w}
\left\{\begin{array}{rclll}
\Sigma_{y_s}&=&y_s
&\mbox{for }y_s\in Y,\\
\Sigma_l&=&\displaystyle\sum_{(**)}\frac{y_{s_{k_1}+\ldots+s_{k_i}}}{i!}\Sigma_{l_1\ldots l_n},
&{\displaystyle\mbox{for }l\in\Lyn Y\setminus Y\atop\displaystyle st(l)=(l_1,l_2),}\\
\Sigma_w&=&\displaystyle\frac{\Sigma_{l_1}^{{\shuffle}_{\phi}i_1}{\shuffle}_{\phi}\ldots{\shuffle}_{\phi}\Sigma_{l_k}^{{\shuffle}_{\phi}i_k}}{i_1!\ldots i_k!},
&{\displaystyle\mbox{for }w=l_1^{i_1}\ldots l_k^{i_k},\mbox{ with }l_1,\ldots,\atop\displaystyle l_k\in\Lyn Y,l_1\succ\ldots\succ l_k.}
\end{array}\right.
\end{eqnarray}
In $(**)$, the sum is taken over all $\{k_1,\ldots,k_i\}\subset\{1,\ldots,k\}$
and $l_1\succeq\ldots\succeq l_n$ such that $(y_{s_1},\ldots,y_{s_k})$ is derived from $(y_{s_{k_1}},\ldots,y_{s_{k_i}},l_1,\ldots,l_n)$ by  transitive closure of the relations on standard sequences \cite{SLC74,reutenauer}.

\begin{example}[\cite{VJM}]
For $\phi$ is the constant map and equals $1$, one has
$$\begin{array}{|c|c|c|}
\hline
l&\Pi_l&\Sigma_l\\
\hline
y_2&y_2-\frac{1}{2}y_1^2&y_2\\
y_1^2&y_1^2&\frac{1}{2}y_2+y_1^2\\
y_3&y_3-\frac{1}{2}y_1y_2-\frac{1}{2}y_2y_1+\frac{1}{3}y_1^3&y_3\\
y_2y_1&y_2y_1-y_2y_1&\frac{1}{2}y_3+y_2y_1\\
y_1y_2&y_2y_1-\frac{1}{2}y_1^3&v\\
y_1^3&y_1^3&\frac{1}{6}y_3+\frac{1}{2}y_2y_1+\frac{1}{2}y_1y_2+y_1^3\\
y_4&y_4-\frac{1}{2}y_1y_3-\frac{1}{2}y_2^2-\frac{1}{2}y_3y_1&y_4\\
&+\frac{1}{3}y_1^2y_2+\frac{1}{3}y_1y_2y_1+\frac{1}{3}y_2y_1^2-\frac{1}{4}y_1^4&\\
y_3y_1&y_3y_1-\frac{1}{2}y_2y_1^2-y_1y_3+\frac{1}{2}y_1^2y_2&\frac{1}{2}y_4+y_3y_1\\
y_2^2&y_2^2-\frac{1}{2}y_2y_1^2-\frac{1}{2}y_1^2y_2+\frac{1}{4}y_1^4&\frac{1}{2}y_4+y_2^2\\
y_2y_1^2&y_2y_1^2-2\,y_1y_2y_1+y_1^2y_2&\frac{1}{6}y_4+\frac{1}{2}y_3y_1+\frac{1}{2}y_2^2+y_2y_1^2\\
y_1y_3&y_1y_3-\frac{1}{2}y_1^2y_2-\frac{1}{2}y_1y_2y_1+\frac{1}{3}y_1^4&y_4+y_3y_1+y_1y_3\\
y_1y_2y_1&y_1y_2y_1-y_1^2y_2&\frac{1}{2}y_4+\frac{1}{2}y_3y_1+y_2^2\\
&&+y_2y_1^2+\frac{1}{2}y_1y_3+y_1y_2y_1\\
y_1^2y_2&y_1^2y_2-\frac{1}{2}y_1^4&\frac{1}{2}y_4+y_3y_1+y_2^2+y_2y_1^2\\
&&+y_1y_3+y_1y_2y_1+y_1^2y_2\\
y_1^4&y_1^4&\frac{1}{24}y_4+\frac{1}{6}y_3y_1+\frac{1}{4}y_2^2+\frac{1}{2}y_2y_1^2\\
&&+\frac{1}{6}y_1y_3+\frac{1}{2}y_1y_2y_1+\frac{1}{2}y_1^2y_2+y_1^4\\
\hline
\end{array}$$
\end{example}

\begin{remark}\label{R1}
\begin{enumerate}
\item Any letter in $Y$ is a Lie polynomial and primitive for $\Delta_{\shuffle}$ (see Remark \ref{R0}). Moreover, both $\ncp{\calL ie_A}{Y}$ and $\calP_{\shuffle}^Y$ are, by definition, closed by linear combinations and by the Lie bracket. Then, by \eqref{Bialgebras2}, one deduces
$\mathcal{U}(\calP_{\shuffle}^Y)=\mathcal{U}(\ncp{\calL ie_A}{Y})\cong\calH_{\shuffle}(Y)$.

\item For any $k>1$, by (\ref{Dstuffle}), since $\Delta_{{\shuffle}_{\phi}}y_k\neq y_k\otimes1_{Y^*}+1_{Y^*}\otimes y_k$ then one has
$\calP_{{\shuffle}_{\phi}}^Y\neq\ncp{\calL ie_A}{Y}$.
Then let $Y':=\{y'_k\}_{k\ge1}$ such that, for any $k\ge1$, one put $y'_k:=\pi_1(y_k)$.
On the one hand, by the previous item, $\ncp{\calL ie_A}{Y'}=\calP_{\shuffle}^{Y'}$ and, on the other hand, by Theorem \ref{isomorphy}, one has
$\calP_{{\shuffle}_{\phi}}^Y\cong\calP_{\shuffle}^{Y'}\cong\mathrm{Im}\,\pi_1$.
\end{enumerate}
\end{remark}

\begin{proposition}[MRS\footnote{MSR is an abbreviation of G. M\'elan\c con, M.P. Sch\"utzen\-berger and C. Reutenauer.} factorization]\label{diagonal}
On the bialgebras $\calH_{\shuffle}(\calX)$ and $\calH_{{\shuffle}_{\phi}}(Y)$, with the dual bases $\{S_w\}_{w\in\calX^*}$--$\{P_w\}_{w\in\calX^*}$ and $\{\Sigma_w\}_{w\in Y^*}$--$\{\Pi_w\}_{w\in Y^*}$ and the decreasing lexicographical order, the series ${\calD}_{\calX}$ and ${\calD}_Y$ are factorized as follows\footnote{Definition \ref{phishuffle}, Theorem \ref{isomorphy}, \eqref{Bialgebras1}--\eqref{dualbialgebras2} and Proposition \ref{diagonal} generalize results on $\stuffle$-bialgebra \cite{VJM}.}
\begin{eqnarray*}
{\calD}_{\calX}=\prod_{l\in\Lyn\calX}^{\searrow}e^{S_l\otimes P_l}
&\mbox{and}&
{\calD}_Y=\prod_{l\in\Lyn Y}^{\searrow}e^{\Sigma_l\otimes\Pi_l}.
\end{eqnarray*}
\end{proposition}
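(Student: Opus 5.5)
The plan is to prove the factorization of $\calD_{\calX}$ first, and then transport it to $\calD_Y$ through the isomorphism $\varphi_{\pi_1}$ of Theorem \ref{isomorphy}.

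\textbf{Setting.} All computations take place in the completed algebra $\ncs{A}{\calX^*\otimes\calX^*}$. The left factor is equipped with $\shuffle$ and the right factor with $\conc$, as in \eqref{tensor1}. With these products, $\calD_{\calX}=\sum_{w\in\calX^*}w\otimes w$ is the canonical element of the duality between $\calH_{\shuffle}^{\vee}(\calX)$ and $\calH_{\shuffle}(\calX)$.

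\textbf{Step 1: rewrite $\calD_{\calX}$ in the dual bases.} Since $\{S_w\}$ and $\{P_w\}$ are dual bases, homogeneous in weight, the canonical element does not depend on the chosen pair of dual bases. Hence
\begin{eqnarray*}
\calD_{\calX}=\sum_{w\in\calX^*}S_w\otimes P_w.
\end{eqnarray*}
This is legitimate because, in each finite-dimensional homogeneous component, a change of basis together with its dual change of basis leaves $\sum_w w\otimes w$ invariant.

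\textbf{Step 2: expand the ordered product.} Take the product $\prod^{\searrow}_{l}e^{S_l\otimes P_l}$ over $\Lyn\calX$. It converges for the topology \eqref{distance}, since only finitely many Lyndon words occur in each weight. By \eqref{tensor1}, we have $(S_l\otimes P_l)^i=S_l^{\shuffle i}\otimes P_l^i$. Expanding the product therefore gives the sum, over all decreasing sequences $l_1\succ\cdots\succ l_k$ of Lyndon words and all exponents $i_1,\ldots,i_k\ge1$, of
\begin{eqnarray*}
\frac{S_{l_1}^{\shuffle i_1}\shuffle\cdots\shuffle S_{l_k}^{\shuffle i_k}}{i_1!\cdots i_k!}\otimes P_{l_1}^{i_1}\cdots P_{l_k}^{i_k}.
\end{eqnarray*}
By \eqref{P_w} and \eqref{S_w}, this term is exactly $S_w\otimes P_w$ with $w=l_1^{i_1}\cdots l_k^{i_k}$. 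The Chen--Fox--Lyndon theorem says that every word has a unique nonincreasing factorization into Lyndon words. The sum thus runs exactly once over all $w\in\calX^*$, and Step 1 concludes.

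\textbf{Main obstacle.} The hard part is the duality claim $\scal{S_u}{P_v}=\delta_{u,v}$ used in Step 1. The paper quotes it from \cite{reutenauer}. If it must be argued, I would proceed as follows. First, $P_w$ is triangular: $P_w=w+(\text{smaller words})$ for the lexicographic order. Second, $\Delta_{\shuffle}$ is a $\conc$-morphism and the $P_l$ are primitive, so a pairing $\scal{S_{l_1}\shuffle\cdots}{P_v}$ can be computed by distributing $\Delta_{\shuffle}$ over $P_v$. Third, these two facts give, by induction on the length of the Lyndon factorization, that $\{S_w\}$ is the dual basis. The factorials in \eqref{S_w} then come out of the multinomial count of how the repeated factors $P_l^{i}$ are distributed.

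\textbf{Transport to $Y$.} For $\calD_Y$ I would not redo the argument. Instead, apply the bialgebra isomorphism $\varphi_{\pi_1}$ of Theorem \ref{isomorphy}, item 4, which maps $\{P_w\}$ to $\{\Pi_w\}$, and use its inverse adjoint, which maps $\{S_w\}$ to $\{\Sigma_w\}$. Apply $({}^t\varphi_{\pi_1}^{-1}\otimes\varphi_{\pi_1})$ to the $\calX=Y$ case of the first identity. This map preserves the canonical element $\calD_Y$. It is multiplicative for $\shuffle\to\shuffle_\phi$ on the left, because it is dual to a coalgebra morphism, and for $\conc$ on the right. It therefore commutes with exponentials and with the ordered product, which yields $\calD_Y=\prod^{\searrow}_{l\in\Lyn Y}e^{\Sigma_l\otimes\Pi_l}$.
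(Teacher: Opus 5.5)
Your proof is correct. For $\calD_{\calX}$ it is the paper's argument: write the canonical element as $\sum_{w}S_w\otimes P_w$, expand each term through \eqref{P_w}--\eqref{S_w} along the nonincreasing Lyndon factorization, and recognise the ordered product of exponentials via \eqref{tensor1}.

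You depart from the paper only for $\calD_Y$. The paper repeats the same expansion with $\Sigma_w\otimes\Pi_w$, using \eqref{Pi_w}--\eqref{Sigma_w} and \eqref{tensor2}. You instead transport the $\calX=Y$ identity along ${}^t\varphi_{\pi_1}^{-1}\otimes\varphi_{\pi_1}$.

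The paper's proof of this proposition does not invoke $\varphi_{\pi_1}$, and it works verbatim for any PBW basis of primitives paired with its dual basis. However, it takes as given, from the discussion around \eqref{Pi_w}--\eqref{Sigma_w} and \cite{siblings}, that the $\Sigma_w$ are exactly dual to the $\Pi_w$ and satisfy the ${\shuffle}_{\phi}$-product formula. Your route derives these facts, and so makes precise the paper's one-line remark that $\{\Pi_w\}$ and $\{\Sigma_w\}$ are the images of $\{P_w\}$ and $\{S_w\}$:
\begin{itemize}
\item $\Pi_w=\varphi_{\pi_1}(P_w)$ follows at once from \eqref{Pi_w}, since $\varphi_{\pi_1}$ is a morphism for concatenation with $\varphi_{\pi_1}(y_s)=\pi_1y_s$.
\item ${}^t\varphi_{\pi_1}^{-1}(S_w)$ is then automatically the dual basis.
\item It automatically has the ${\shuffle}_{\phi}$-product form, because the adjoint of a map intertwining $\Delta_{\shuffle}$ with $\Delta_{{\shuffle}_{\phi}}$ is an algebra map from $\shuffle$ to ${\shuffle}_{\phi}$.
\item Both factors preserve weight, so the transported map is continuous and passes through the exponentials and the ordered product.
\end{itemize}

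The price is dependence on Theorem \ref{isomorphy}. You need $\Delta_{{\shuffle}_{\phi}}\circ\varphi_{\pi_1}=(\varphi_{\pi_1}\otimes\varphi_{\pi_1})\circ\Delta_{\shuffle}$, as in the commutative diagram of its proof; item 4 as displayed has $\Delta_{{\shuffle}_{\phi}}$ on both sides, a misprint. That intertwining rests on the primitivity of $\pi_1y_s$, whose proof in the paper is only sketched.

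Two harmless slips in your side remarks:
\begin{itemize}
\item With $x_0\prec x_1$ one has $P_{x_0x_1}=x_0x_1-x_1x_0$. So $P_w$ is $w$ plus larger words, not smaller ones, and $S_w$ is $w$ plus smaller words, as the table confirms.
\item For $Y$, convergence of the infinite product should be taken for the weight grading, coefficientwise. The length distance \eqref{distance} does not work there, since $Y$ has infinitely many letters of length one.
\end{itemize}
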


\begin{proof}
By Definition \ref{diaser} and the constructions in \eqref{P_w}--\eqref{Sigma_w}, one has
\begin{eqnarray*}
\calD_{\calX}=&\displaystyle\sum_{w\in\calX^*}S_w\otimes P_w&=
\sum_{l_1,\ldots,l_k\in\Lyn Y\atop l_1\succ\ldots\succ l_k}\frac{S_{l_1}^{\shuffle i_1}\shuffle\ldots\shuffle S_{l_k}^{\shuffle i_k}}{i_1!\ldots i_k!}\otimes P_{l_1}^{i_1}\ldots P_{l_k}^{i_k}\cr
\calD_Y=&\displaystyle\sum_{w\in Y^*}\Sigma_w\otimes\Pi_w
&=\sum_{l_1,\ldots,l_k\in\Lyn Y\atop l_1\succ\ldots\succ l_k}\frac{\Sigma_{l_1}^{{\shuffle}_{\phi}i_1}{\shuffle}_{\phi}\ldots{\shuffle}_{\phi}\Sigma_{l_k}^{{\shuffle}_{\phi}i_k}}{i_1!\ldots i_k!}\otimes\Pi_{l_1}^{i_1}\ldots\Pi_{l_k}^{i_k}.
\end{eqnarray*}
By \eqref{tensor1}--\eqref{tensor2}, it follows then the expected results.
\end{proof}

\begin{corollary}\label{factorisations}
Let $S\in\ncs{A}{\calX}$ (resp. $\ncs{A}{Y}$) be grouplike for $\Delta_{\shuffle}$ (resp. $\Delta_{{\shuffle}_{\phi}}$). Then\footnote{$\{\scal{S}{S_l}\}_{l\in\Lyn\calX}$ (resp. $\{\scal{S}{\Sigma_l}\}_{l\in\Lyn Y}$) are called local coordinates of second kind of $S$ in $\calG_{\shuffle}^{\calX}$ (resp. $\calG_{{\shuffle}_{\phi}}^{Y}$).}, using the decreasing lexicographical order, one has
\begin{eqnarray*}
S=\prod_{l\in\Lyn\calX}^{\searrow}e^{\scal{S}{S_l}P_l}
&\Big(\mbox{resp.}&
S=\prod_{l\in\Lyn Y}^{\searrow}e^{\scal{S}{\Sigma_l}\Pi_l}\Big).
\end{eqnarray*}
\end{corollary}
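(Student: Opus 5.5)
The plan is to obtain the factorization of $S$ by applying $S$, viewed as a character, to the left tensor factor of the MRS factorization of $\calD_{\calX}$ (resp. $\calD_Y$) in Proposition \ref{diagonal}. Concretely, consider the linear map
\[
S\otimes\mathrm{Id}:\ncs{A}{\calX^*\otimes\calX^*}\longrightarrow\ncs{A}{\calX},\qquad u\otimes v\longmapsto\scal{S}{u}\,v ,
\]
extended termwise. This is well defined because every homogeneous component of the second factor is a finite sum. The first step is to check that
\[
(S\otimes\mathrm{Id})\calD_{\calX}=\sum_{w\in\calX^*}\scal{S}{w}w=S ,
\]
which is immediate from $\calD_{\calX}=\sum_w w\otimes w$. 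The identical argument applies for $Y$ with $\calD_Y$.

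The second step is the key point. By Proposition \ref{proposition2}, $S$ grouplike for $\Delta_{\shuffle}$ (resp. $\Delta_{{\shuffle}_{\phi}}$) means that $S$ is a character for $\shuffle$ (resp. ${\shuffle}_{\phi}$). Hence $S\otimes\mathrm{Id}$ is a morphism of algebras from $\ncp{A}{\calX}\otimes\ncp{A}{\calX}$, equipped with the product \eqref{tensor1} (resp. \eqref{tensor2}) in which the first factor multiplies by shuffle and the second by concatenation, into $(\ncs{A}{\calX},\conc)$. Indeed,
\[
(S\otimes\mathrm{Id})\big((u_1\shuffle u_2)\otimes v_1v_2\big)=\scal{S}{u_1}\scal{S}{u_2}\,v_1v_2 .
\]
The morphism is also continuous for the topology \eqref{distance}, since it does not lower the degree in the second factor. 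It therefore commutes with exponentials and with the infinite ordered product. Applying it to the MRS factorization gives
\[
S=\prod^{\searrow}_{l}(S\otimes\mathrm{Id})\big(e^{S_l\otimes P_l}\big).
\]
Each factor is computed as
\[
(S\otimes\mathrm{Id})\big(e^{S_l\otimes P_l}\big)=\sum_k\frac{\scal{S}{S_l^{\shuffle k}}}{k!}P_l^k=\sum_k\frac{\scal{S}{S_l}^k}{k!}P_l^k=e^{\scal{S}{S_l}P_l},
\]
where the middle equality uses multiplicativity again. The same computation with $\Sigma_l,\Pi_l$ and ${\shuffle}_{\phi}$ gives the second formula.

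The main obstacle I expect is justifying that the infinite ordered product $\prod^{\searrow}$ is legitimately mapped factor by factor. I would handle this by working in each fixed weight, which for $Y$ is the weight grading and for general $\calX$ is the fine grading. In a fixed weight only finitely many Lyndon words contribute and the product is finite, so the statement reduces to the finite multiplicativity of $S\otimes\mathrm{Id}$ established above. As a fallback, the same bookkeeping gives the result directly: expand $S=\sum_w\scal{S}{S_w}P_w$ using the dual bases, and use
\[
\scal{S}{S_w}=\prod_j\frac{\scal{S}{S_{l_j}}^{i_j}}{i_j!}
\]
from \eqref{S_w} (resp. \eqref{Sigma_w}) together with the character property. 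Regrouping then gives the product of exponentials, with the PBW ordering $l_1\succ\cdots\succ l_k$ matching the decreasing product.
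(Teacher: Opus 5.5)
Your proposal is correct and takes essentially the paper's route. The paper gives no separate proof; the corollary is meant to be read off from the MRS factorization of Proposition \ref{diagonal} by applying the character $S\otimes\mathrm{Id}$, which is exactly how the paper uses $(S\otimes\mathrm{Id})\calD_Y=S$ in the proof of Theorem \ref{isomorphy} and obtains the grouplike series of the Appendix as images of $\calD_Y$ and $\calD_X$. Your weight-by-weight truncation, and the fallback via $\scal{S}{S_w}=\prod_j\scal{S}{S_{l_j}}^{i_j}/i_j!$, supply the convergence argument the paper leaves implicit. One small correction: $S\otimes\mathrm{Id}$ is not defined on all of $\ncs{A}{\calX^*\otimes\calX^*}$, only on the diagonal-type completion in which each second-factor word is paired with finitely many first-factor words, which is where $\calD_{\calX}$, $\calD_Y$ and the factors $e^{S_l\otimes P_l}$, $e^{\Sigma_l\otimes\Pi_l}$ live.
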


\begin{remark}\label{orders_Rk}
For various  applications, in extension of the results in \cite{PMB,Linz,ACA,CM}, $x_i$ (resp. $y_k$) can also be an alphabet $\{x_{k,i}\}_{k\ge1}$ (resp. $\{y_{x,k}\}_{x\in X}$), equipped the following total order, additionally to the orders in \eqref{orders},
\begin{eqnarray*}
&&x_{n,i}\succ\cdots\succ x_{n,i}\succ x_{n+1,i}\succ\cdots,\mbox{ for }1\le i\le m\\
&\mbox{(resp.}&
y_{x_0,k}\prec\cdots\prec y_{x_m,k},\mbox{ for }k\ge1),
\end{eqnarray*}
for which the set of Lyndon words over the alphabet $x_i$ (resp. $y_k$) is denoted by $\Lyn(x_k)$ (resp. $\Lyn(y_k)$) such that
\begin{eqnarray*}
&&x_0\prec\Lyn(x_1)\prec\cdots\prec\Lyn(x_m)\\
&\mbox{(resp.}&
\Lyn(y_1)\succ\cdots\Lyn(y_n)\succ\Lyn(y_{n+1})\succ\cdots).
\end{eqnarray*}
\end{remark}

\section{Algebraic combinatorics on representative series}\label{representativeseries}
\subsection{Various characterizations of representative series}
Representative (or rational) series are the representative functions
on the free monoid (see Section \ref{intro}).

\begin{definition}\label{dec1}
Let $S\in\ncs{A}{\calX}$ (resp. $\ncp{A}{\calX}$) and $P\in\ncp{A}{\calX}$ (resp. $\ncs{A}{\calX}$).
Then the {\it left} and the {\it right} \textit{shifts}\footnote{These are called {\it residuals} and extend shifts of functions in harmonic analysis \cite{jacob}.

In terms of representative functions, these correspond the {\it left} and {\it right} \textit{translates} \cite{abe,CartierPatras}.} of $S$ by $P$, $P\rd S$ and {$S\rg P$}, is defined as follows
\begin{eqnarray*}
\forall w\in\calX^*,&
\scal{{P\rd S}}{w}=\scal{S}{wP},&\scal{{S\rg P}}{w}=\scal{S}{Pw}.
\end{eqnarray*}
\end{definition}

\begin{remark}
The shifts operators are associative and mutually commute, \textit{i.e.}
\begin{eqnarray*}
S\rg(P\rg R)=(S\rg P)\rg R,&
P\rd(R\rd S)=(P\rd R)\rd S,&
(P\rg S)\rd R=P\rg(S\rd R)
\end{eqnarray*}
and then one has $x\rd(wy)=(yw)\rg x=\delta_{x,y}w$, for
$x,y\in\calX$ and $w\in\calX^*$.
\end{remark}

\begin{definition}[\cite{berstel}]\label{rational}
The series $S$ is rational (or representative) if it belongs to the smalest algebraic closure by rational operations (conatenation, addition,  Kleene star) containing $\ncp{A}{\calX}$.
The $A$-module of rational series is denoted by ${\ncs{A^{\mathrm{rat}}}\calX}$.
\end{definition}

\begin{definition}\label{dec1bis}
With the notations in \eqref{Bialgebras1}--\eqref{dualbialgebras2}, if $A=K$ being a field then one defines the Sweedler's dual $\calH_{\shuffle}^{\circ}(\calX)$ (resp. $\calH_{{\shuffle}_{\phi}}^{\circ}(Y)$) of $\calH_{\shuffle}(\calX)$ (resp. $\calH_{{\shuffle}_{\phi}}(Y)$), for $S\in\ncs{K}{\calX}$ (resp. $\ncs{K}{Y}$), as follows
\begin{eqnarray*}
S\in\calH_{\shuffle}^{\circ}(\calX)\mbox{ (resp. $\calH_{{\shuffle}_{\phi}}^{\circ}(Y)$)}
&\iff&\Delta_{\conc}(S)=\sum_{i\in I_f}G_i\otimes D_i,
\end{eqnarray*}
where $I_f$ is some finite set and $\{G_i,D_i\}_{i\in I_f}$ are series in $\calH_{\shuffle}^{\circ}(\calX)$ (resp. $\calH_{{\shuffle}_{\phi}}^{\circ}(Y)$).
\end{definition}

\begin{remark}\label{dec2}
Let $S\in\ncs{A}{\calX}$ and suppose that there is some finite set $I_f$  and a double family $\{G_i,D_i\}_{i\in I_f}$ of series in $\ncs{A}{\calX}$ such that, using $\Delta_{\conc}$ in \eqref{Dconc},
\begin{eqnarray*}
\Delta_{\conc}(S)=\sum_{i\in I_f}G_i\otimes D_i.
\end{eqnarray*}

Then, for any $v\in\calX^*$ and $i\in I_f$, putting $G'_i=G_i\rg v$ and $D'_i=v\rd D_i$, one has
\begin{enumerate}
\item $\Delta_{\conc}(S\rg v)=\sum\limits_{i\in I_f}G'_i\otimes D_i$ and
$\Delta_{\conc}(v\rd S)=\sum\limits_{i\in I_f}G_i\otimes D'_i$.

\item $\{S\rg v\}_{v\in\calX^*}$ (resp. $\{v\rd S\}_{v\in \calX^*}$) lie in a finitely generated shift-invariant $A$-module if and only if $\{G_i\rg v\}_{v\in \calX^*}$ (resp. $\{v\rd D_i\}_{v\in \calX^*}$) does (for $i\in I_f$).

\item If $S\in\calH_{\shuffle}^{\circ}(\calX)$ then $v\rd S$ and $S\rg v\in\calH_{\shuffle}^{\circ}(\calX)$ ($v\in\calX^*$).

\item $\calH_{\shuffle}(\calX)$ and $\calH_{{\shuffle}_{\phi}}(Y)$ are graded while $\calH_{\shuffle}^{\circ}(\calX)$ and $\calH_{{\shuffle}_{\phi}}^{\circ}(Y)$ are, generally, not.
\end{enumerate}
\end{remark}

\begin{theorem}\label{KS}
A series $S$ is rational if and only one of the following assertions holds
\begin{enumerate}
\item The shifts $\{S\trl w\}_{w\in\calX^*}$ (resp. $\{w\rd S\}_{w\in \calX^*}$) lie in a finitely generated shift-invariant $A$-module \cite{jacob-these}.

\item There is $n\in\N$ and a triplet $(\nu,\mu,\eta)$, so-called linear representation of dimension\footnote{The dimension is here (as in \cite{berstel}) the size of the matrices.} $n$ of $S$, where \cite{berstel}
\begin{eqnarray*}
\nu\in\calM_{n,1}(A),&\mu:\calX^*\longrightarrow\calM_{n,n}(A),&\eta\in\calM_{1,n}(A)
\end{eqnarray*}
such that, for any $w\in\calX^*$, one has (Kleene-Sch\"utzenberger theorem)
\begin{eqnarray*}
\scal{S}{w}=\nu\mu(w)\eta.
\end{eqnarray*}
\end{enumerate}
\end{theorem}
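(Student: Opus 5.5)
The plan is to prove the cycle of implications: rational $\Rightarrow$ (2) $\Rightarrow$ (1) $\Rightarrow$ rational, with Definition \ref{rational} as the notion of rationality. The shift identities of the Remark following Definition \ref{dec1} will be used throughout; in particular, shifts by words compose as $(S\rg u)\rg v=S\rg(uv)$, and similarly for left shifts.

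\emph{Rational $\Rightarrow$ (2).} I will show that the set of series admitting a linear representation contains $\ncp{A}{\calX}$ and is closed under addition, concatenation and Kleene star of proper series.
\begin{itemize}
\item For a polynomial $P$ of degree $d$, take the standard automaton on the words of length $\le d$: states are these prefixes, and $\mu(x)$ maps $u$ to $ux$.
\item Sums are handled by block-diagonal direct sums of representations.
\item For the product $ST$, use the block upper-triangular matrix
$\begin{pmatrix}\mu_S(x)&\eta_S\nu_T\mu_T(x)\\0&\mu_T(x)\end{pmatrix}$, with the constant term of $T$ incorporated into the output vector.
\item For the star of a proper $S$ (that is, $\scal{S}{1_{\calX^*}}=0$), use $\mu'(x)=\mu(x)+\eta\nu\mu(x)$ on a space enlarged by one dimension, which accounts for the unit $1_{\calX^*}$.
\end{itemize}
These are the classical Kleene--Sch\"utzenberger constructions \cite{berstel}. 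They are valid over any commutative ring $A$, since no division is involved.

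\emph{(2) $\Rightarrow$ (1).} Given $(\nu,\mu,\eta)$, set $\lambda_w:=\nu\mu(w)$, a row vector. Then
\[
\scal{S\rg w}{u}=\lambda_w\mu(u)\eta=\sum_{i=1}^n(\lambda_w)_i\scal{D_i}{u},
\]
where $D_i$ is the series with representation $({}^te_i,\mu,\eta)$, as in \eqref{sweedler}. Hence every $S\rg w$ lies in the $A$-module generated by $D_1,\ldots,D_n$. Moreover, $D_i\rg x=\sum_j\mu(x)_{ij}D_j$, so this module is shift-invariant. The case of left shifts is symmetric, using the $G_i$.

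\emph{(1) $\Rightarrow$ rational (and $\Rightarrow$ (2)).} Let $M$ be a shift-invariant module generated by $T_1,\ldots,T_n$ and containing $S$. Write
\[
S=\sum_i a_iT_i \quad\text{and}\quad T_i\rg x=\sum_j m_{ij}(x)T_j\quad(x\in\calX).
\]
Define $\mu(x):=(m_{ij}(x))$, extend it multiplicatively to words, and set $\nu:=(a_i)$ and $\eta:=(\scal{T_i}{1_{\calX^*}})_i$. Induction on the length of $w$, using $\scal{T\rg xw}{1}=\scal{(T\rg x)\rg w}{1}$, gives $\scal{S}{w}=\nu\mu(w)\eta$. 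Only the existence of some coefficients $m_{ij}(x)$ is needed here, not their uniqueness, so the lack of freeness of $M$ causes no trouble.

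It remains to pass from (2) back to Definition \ref{rational}. Let $\mathbf{M}:=\sum_{x\in\calX}\mu(x)x$, a matrix of proper polynomials (for finite $\calX$). Then
\[
S=\nu\,\mathbf{M}^*\,\eta.
\]
The entries of $\mathbf{M}^*$ are rational by the usual block recursion: splitting $\mathbf{M}$ into blocks with entries $a,b,c,d$, one gets
\[
(\mathbf{M}^*)_{11}=(a+bd^*c)^*
\]
and similar formulas for the other blocks, with induction on $n$.

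The main obstacle lies in this last step and in the alphabet issue. For an infinite alphabet such as $Y$, the sum $\sum_x\mu(x)x$ is not a polynomial. For $\calX=Y$, I would restrict the claim to the case where $\mu(y)=0$ for all but finitely many letters $y$, or else interpret the rational closure as a closure in the topology \eqref{distance}. In either case, the block-star recursion is the delicate computation to carry out carefully.
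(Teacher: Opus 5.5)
For a finite alphabet your proof is correct, and it is the classical one. It combines the Kleene--Sch\"utzenberger closure constructions, the series $D_i$ of Proposition \ref{PQ} for the shift module, and the matrix star $\nu\,\mathbf{M}^*\eta$. The paper gives no proof of its own here; it only cites \cite{jacob-these} and \cite{berstel}, which argue this way. One small slip: over an infinite alphabet, the base case for a polynomial $P$ must use the prefixes of the words of $\supp P$, not all words of length $\le d$, since there are infinitely many of those.

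The step you flag for $\calX=Y$ cannot be fixed by careful computation. With Definition \ref{rational} read literally, the implication from (2) to rationality is false there.
\begin{itemize}
\item Having support in $F^*$ for some finite $F\subset Y$ holds for polynomials and is preserved by sums, products and stars. So every rational series in that sense involves only finitely many letters.
\item Yet $Y^*=(\sum_{y\in Y}y)^*$ has the dimension-one representation $(1,\,y\mapsto1,\,1)$, and all its shifts equal itself, so it satisfies both (1) and (2).
\item The paper itself treats $(ty_1+t^2y_2+\cdots)^*$ as rational in the footnote to Theorem \ref{exchangeable}, so a larger notion is intended.
\end{itemize}

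Your topological option does not repair this, in either direction.
\begin{itemize}
\item For every $m$, the property ``only finitely many letters occur among the support words of length $\le m$'' is preserved by the rational operations and by limits for \eqref{distance}. $Y^*$ violates it, so it is still not captured.
\item Such a closure is also too large. It contains $\sum_{n\ge0}y_1^{n^2}$, the limit of its truncations. This series has no linear representation: Cayley--Hamilton would force a monic linear recurrence on its $0/1$ coefficients, which fails once the gaps between consecutive squares exceed the order of the recurrence.
\end{itemize}

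Your other option, requiring $\mu(y)=0$ for almost all $y$, gives a true statement that matches Definition \ref{rational} exactly, but it is weaker.

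The repair that keeps the theorem for $Y$ is to admit planes (elements of $\widehat{A.\calX}$) as generators of the rational closure. A plane $\sum_y\alpha_yy$ has the representation $\bigl((1\ 0),\,y\mapsto\begin{pmatrix}0&\alpha_y\\0&0\end{pmatrix},\,\begin{pmatrix}0\\1\end{pmatrix}\bigr)$, so the direction from rationality to (2) survives. Your block-star recursion then applies verbatim to $\mathbf{M}=\sum_y\mu(y)y$, whose entries are planes.
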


\begin{definition}\label{nil_sol}
\begin{enumerate}
\item Let $\calL$ be the Lie algebra. Then $\calL$ is said to be
\textit{nilpotent} (resp. \textit{solvable}) if and only if there exists an integer $k\ge1$ such that the sequence $\{\calL^n\}_{n\ge1}$ (resp. $\{\calL^{(n)}\}_{n\ge1}$), defined recursively as follows
\begin{eqnarray*}
\calL^1=\calL,\calL^{n+1}=[\calL,\calL^n]
&\mbox{(resp.}&
\calL^{(1)}=\calL,\calL^{(n+1)}=[\calL^{(n)},\calL^{(n)}])\end{eqnarray*}
satisfies $\calL^{k+1}=\{0\}$ (resp. $\calL^{(k+1)}=\{0\}$).

\item Let $(\nu,\mu,\eta)$ be a linear representation of $S\in\ncs{A^{\mathrm{rat}}}{\calX}$.
One defines
\begin{enumerate}
\item the Lie algebra generated by $\{\mu(x)\}_{x\calX}$ and denoted by $\calL(\mu)$,
\item the following function on monoid \begin{eqnarray*}
M:\calX^*&\longrightarrow&M_{n,n}(\ncs{A}{\calX}),\\
w&\longmapsto&\mu(w)w.
\end{eqnarray*}
\end{enumerate}
\end{enumerate}
\end{definition}

One has the following constructions of representations of representative series.
\begin{proposition}\label{linearrepresentation}
The module ${\ncs{A^{\mathrm{rat}}}\calX}$ (resp. ${\ncs{A^{\mathrm{rat}}}{Y}}$)
is closed by $\shuffle$ (resp. ${\shuffle}_{\phi}$). Moreover, for any $i=1,2$, let
$R_i\in{\ncs{A^{\mathrm{rat}}}\calX}$ and $(\nu_i,\mu_i,\eta_i)$ be its
representation of dimension $n_i$. Then the linear representation
$$\begin{array}{rccl}
\mbox{that of}&R_i^*&\mbox{is}&\Big(\begin{pmatrix}0&1\end{pmatrix},
\left\{\begin{pmatrix}\mu_i(x)+\eta_i\nu_i\mu_i(x)&0\cr\nu_i\eta_i&0\end{pmatrix}\right\}_{x\in\calX},
\begin{pmatrix}\eta_i\cr1\end{pmatrix}\Big),\cr
\mbox{that of}&R_1+R_2&\mbox{is}&\Big(\begin{pmatrix}\nu_1&\nu_2\end{pmatrix},
\left\{\begin{pmatrix}\mu_1(x)&{\bf 0}\cr{\bf 0}&\mu_2(x)\end{pmatrix}\right\}_{x\in\calX},
\begin{pmatrix}\eta_1\cr\eta_2\end{pmatrix}\Big),\cr
\mbox{that of}&R_1R_2\quad&\mbox{is}&\Big(\begin{pmatrix}\nu_1&0\end{pmatrix},
\left\{\begin{pmatrix}\mu_1(x)&\eta_1\nu_2\mu_2(x)\cr0&\mu_2(x)\end{pmatrix}\right\}_{x\in\calX},
\begin{pmatrix}\eta_1\mu_2\eta_2\cr\eta_2\end{pmatrix}\Big),\cr
\mbox{that of}&R_1\shuffle R_2&\mbox{is}&(\nu_1\otimes\nu_2,\{\mu_1(x)\otimes\mathrm{I}_{n_2}
+\mathrm{I}_{n_1}\otimes\mu_2(x)\}_{x\in\calX},\eta_1\otimes\eta_2),\\
\mbox{that of}&R_1{\shuffle}_{\phi} R_2&\mbox{is}&(\nu_1\otimes\nu_2,
\{\mu_1(y_k)\otimes\mathrm{I}_{n_2}+\mathrm{I}_{n_1}\otimes\mu_2(y_k)\cr
&&&+\sum\limits_{i+j=k}\gamma_{i,j}^k\mu_1(y_i)\otimes\mu_2(y_j)\}_{k\ge1},\eta_1\otimes\eta_2).
\end{array}$$
\end{proposition}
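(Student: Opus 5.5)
The plan is to verify each of the five representations by a direct coefficient computation. Rationality of the result then follows from Theorem \ref{KS} (Kleene--Sch\"utzenberger form). Closure of ${\ncs{A^{\mathrm{rat}}}\calX}$ under $\shuffle$ (resp. of ${\ncs{A^{\mathrm{rat}}}Y}$ under ${\shuffle}_{\phi}$) will then be immediate. I treat the two "sum-like" constructions first, then the shuffle ones, and the star last.

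For $R_1+R_2$, the morphism $x\mapsto\mathrm{diag}(\mu_1(x),\mu_2(x))$ satisfies $\mu(w)=\mathrm{diag}(\mu_1(w),\mu_2(w))$. Hence $\nu\mu(w)\eta=\nu_1\mu_1(w)\eta_1+\nu_2\mu_2(w)\eta_2$.

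For $R_1R_2$, I will show by induction on $\abs{w}$ that $\mu(w)$ is block upper triangular. Its diagonal blocks are $\mu_1(w)$ and $\mu_2(w)$, and its off-diagonal block is $\sum_{w=uxv,\,x\in\calX}\mu_1(u)\eta_1\nu_2\mu_2(xv)$. Pairing with $\nu$ and $\eta$ then produces $\sum_{uv=w,\,v\neq1}\scal{R_1}{u}\scal{R_2}{v}$ from the off-diagonal block. The missing term $v=1_{\calX^*}$, namely $\scal{R_1}{w}\scal{R_2}{1_{\calX^*}}$, comes from the component $\eta_1\nu_2\eta_2$ of the final vector. This recovers the Cauchy product \eqref{extenoverseries1}.

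For the shuffle products, the key observation avoids any induction. Let $\Phi:\calX^*\to\calM_{n_1n_2,n_1n_2}(A)$ be the monoid morphism sending $x$ to $\mu_1(x)\otimes\mathrm{I}_{n_2}+\mathrm{I}_{n_1}\otimes\mu_2(x)$. Since $\Delta_{\shuffle}$ is a $\conc$-morphism and $\mu_1\otimes\mu_2$ (Kronecker product) is an algebra morphism from $\ncp{A}{\calX}\otimes\ncp{A}{\calX}$ to the tensor product of matrix algebras, we get $\Phi=(\mu_1\otimes\mu_2)\circ\Delta_{\shuffle}$, because both sides agree on letters by \eqref{Dshuffle}. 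Therefore
\begin{eqnarray*}
(\nu_1\otimes\nu_2)\Phi(w)(\eta_1\otimes\eta_2)=\sum_{u,v\in\calX^*}\scal{\Delta_{\shuffle}w}{u\otimes v}\scal{R_1}{u}\scal{R_2}{v}=\sum_{u,v}\scal{u\shuffle v}{w}\scal{R_1}{u}\scal{R_2}{v},
\end{eqnarray*}
which is $\scal{R_1\shuffle R_2}{w}$ by \eqref{extenoverseries2} and the duality $\scal{u\shuffle v}{w}=\scal{u\otimes v}{\Delta_{\shuffle}w}$. The ${\shuffle}_{\phi}$ case is identical, using \eqref{Dstuffle} on letters, the $\conc$-morphism property of $\Delta_{{\shuffle}_{\phi}}$, and \eqref{extenoverseries3}. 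Dualizability of $\phi$ (finitely many $\gamma_{i,j}^k\neq0$ for each $k$) guarantees that the matrix assigned to $y_k$ is a finite sum.

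For $R_i^*$, I will assume $R_i$ proper, i.e. $\nu_i\eta_i=0$, as required by Definition \ref{diaser}. I will then check that the candidate series $T$ given by the triplet satisfies the star equation $\nabla T=R_iT$ and has constant term $1$. Uniqueness of solutions of this equation for the ultrametric topology \eqref{distance} then gives $T=R_i^*$. Concretely, I would prove by induction on $\abs{w}$ the block formula $\mu(xw)=\mu(x)\mu(w)$ and read off $\scal{T}{xw}$ as $\sum_{xw=uv,\,u\neq1}\scal{R_i}{u}\scal{T}{v}$.

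The main obstacle is this star case. The block matrix displayed in the statement seems to suffer from convention and index slips, for example the sizes in the lower row and the placement of $\nu_i\eta_i$ versus $\eta_i\nu_i$. I would first try the standard construction $\mu(x)=\mu_i(x)+\eta_i\nu_i\mu_i(x)$ on the $n_i$-dimensional block, augmented by one extra state carrying the constant term $1$. I would adjust the displayed entries so that the induction closes, noting any needed correction explicitly.
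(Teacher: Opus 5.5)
Your proposal is correct and follows the paper's route. For $R_1\shuffle R_2$ and $R_1{\shuffle}_{\phi}R_2$, the paper's only argument is that the representation comes from the coproduct ($\Delta_{\shuffle}$, resp.\ $\Delta_{{\shuffle}_{\phi}}$) combined with the tensor product of the representations, which is exactly your identity $\Phi=(\mu_1\otimes\mu_2)\circ\Delta$. For the sum, product and star the paper only cites Jacob and Duchamp--Flouret--Laugerotte--Luque, so your direct coefficient checks supply what it leaves to the literature.

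Your suspicion about the star display is justified. The lower-left block must be the row $\nu_i\mu_i(x)$, not the scalar $\nu_i\eta_i$. That scalar is $0$ for proper $R_i$, and with it the triple would represent $1_{\calX^*}$. Likewise, in the product case $\eta_1\mu_2\eta_2$ must read $\eta_1\nu_2\eta_2$. With this fix, the coefficient of $w=x_1\cdots x_k$ ($k\ge1$) is $\nu_i\mu_i(x_1)(\mathrm{I}_{n_i}+\eta_i\nu_i)\mu_i(x_2)\cdots(\mathrm{I}_{n_i}+\eta_i\nu_i)\mu_i(x_k)\eta_i$. Splitting at the first factor where $\eta_i\nu_i$ is chosen gives exactly your equation $\scal{T}{w}=\sum_{w=uv,\,u\neq1_{\calX^*}}\scal{R_i}{u}\scal{T}{v}$.
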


\begin{proof}
Only the last one is new and the first ones are treated in \cite{jacob}, see also \cite{DFLL}. The linear representations of $R_1\shuffle R_2$ and $R_1{\shuffle}_{\phi} R_2$ base on coproducts and tensor products of linear representations\end{proof}

\begin{example}[\cite{PMB}]
Let $Y=\{y_k\}_{k\ge1}$. A linear representations of dimension $1$ of $(t^ky_k)^*$ and $(-t^ky_k)^*$, for $k\ge1$, are respectively $(1,t^k,1)$ and $(1,-t^k,1)$.
For $\phi$ equals $1$, with the notations in Example \ref{q}, one has $(t^ky_k)^*\stuffle(-t^ky_k)^*=(-t^{2k}y_{2k})^*$ (see \cite{PMB})
and $(-t^{2k}y_{2k})^*$ admit $(1,-t^{2k},1)$ as linear representation of dimension $1$.
\end{example}

\begin{example}[\cite{PMB}]\label{automate}
The series $(t^2x_0x_1)^*$ and $(-t^2x_0x_1)^*$ admit a linear representation, respectively,
$(\nu_1,\{\mu_1(x_0),\mu_1(x_1)\},\eta_1)$ and $(\nu_2,\{\mu_2(x_0),\mu_2(x_1)\},\eta_2)$, where
$$\begin{array}{rlrl}
\nu_1=\begin{pmatrix}1&0\end{pmatrix},
&\mu_1(x_0)=\begin{pmatrix}0&t\cr0&0\end{pmatrix},
&\mu_1(x_1)=\begin{pmatrix}0&0\cr t&0\end{pmatrix},
&\eta_1=\begin{pmatrix}1\cr0\end{pmatrix},\cr
\nu_2=\begin{pmatrix}1&0\end{pmatrix},
&\mu_2(x_0)=\begin{pmatrix}0&\mathrm{i}t\cr0&0\end{pmatrix},
&\mu_2(x_1)=\begin{pmatrix}0&0\cr\mathrm{i}t&0\end{pmatrix},
&\eta_2=\begin{pmatrix}1\cr0\end{pmatrix}.
\end{array}$$
One has $(-t^2x_0x_1)^*\shuffle(t^2x_0x_1)^*=(-4t^4x_0^2x_1^2)^*$ (see also \cite{PMB}) and $(-4t^4x_0^2x_1^2)^*$ admits $(\nu,\{\mu(x_0),\mu(x_1)\},\eta)$ as a linear representation, where
$$\nu=\begin{pmatrix}1&0&0&0\end{pmatrix},
\mu(x_0)=\begin{pmatrix}
0&\mathrm{i}t&t&0\cr
0&0&0&t\cr
0&0&0&\mathrm{i}t\cr
0&0&0&0
\end{pmatrix},\\
\mu(x_1)=\begin{pmatrix}
0&0&0&0\cr
\mathrm{i}t&0&0&0\cr
t&0&0&0\cr
0&t&\mathrm{i}t&0
\end{pmatrix},
\eta=\begin{pmatrix}1\cr0\cr0\cr0\end{pmatrix}.$$
\end{example}

\begin{corollary}[Factorization and decomposition, \cite{CM}]\label{factorized}
Let $(\nu,\mu,\eta)$ be a linear representation of $S\in\ncs{A^{\mathrm{rat}}}{\calX}$. Then, with the notations of Definition \ref{nil_sol}, one has 
\begin{enumerate}
\item $S=\nu M(\calX^*)\eta$ and
\begin{eqnarray*}
M(\calX^*)=\prod_{l\in\Lyn\calX}^{\searrow}e^{\mu(P_ l)S_l}
&\Big(\mbox{resp.}&
M(Y^*)=\prod_{l\in\Lyn Y}^{\searrow}e^{\mu(\Pi_ l)\Sigma_l}\Big).
\end{eqnarray*}

\item If $\{M(x)\}_{x\in\calX}$ are upper triangular  then $S=\nu((D(\calX^*)N(\calX))^*D(\calX^*)\eta$, where $N(\calX)$ (resp. $D(\calX)$) is a strictly upper triangular (resp. diagonal) matrix such that $M(\calX)=N(\calX)+D(\calX)$. Moreover, $D(\calX^*)$ is diagonal and there is a positive interger $k$ such that $D(\calX^*)N(\calX)$ is nilpotent of order $k$ and then
$S=\nu(I_n+D(\calX^*)N(\calX^*)+\ldots+(D(\calX^*)N(\calX^*))^k)D(\calX^*)\eta$.
\end{enumerate}
\end{corollary}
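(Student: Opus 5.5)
The plan is to read $M(\calX^*)$ as the image of the diagonal series under $\mu\otimes\mathrm{Id}$. Concretely, $M(\calX^*)=\sum_{w\in\calX^*}\mu(w)w$ is the series with matrix coefficients obtained by applying $\mu\otimes\mathrm{Id}$ to $\calD_{\calX}=\sum_{w}w\otimes w$, with the two tensor factors written in the order coefficient $\otimes$ word. The identity $S=\nu M(\calX^*)\eta$ is immediate from Theorem \ref{KS}: the coefficient of $w$ in $\nu M(\calX^*)\eta$ is $\nu\mu(w)\eta=\scal{S}{w}$.

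For the factorization I would write $\calD_{\calX}=\sum_w P_w\otimes S_w$, with the PBW basis in the left slot, so that Proposition \ref{diagonal} gives $\calD_{\calX}=\prod^{\searrow}_{l}e^{P_l\otimes S_l}$. This form is legitimate because $\{P_w\}$ and $\{S_w\}$ are dual bases, so $\sum_w w\otimes w=\sum_w P_w\otimes S_w$.

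The key observation is that $\mu\otimes\mathrm{Id}$ is a continuous algebra morphism for the appropriate products:
\begin{itemize}
\item In the left slot we have concatenation, and $\mu$ is a monoid morphism extended linearly, hence $\mu(P_w)=\mu(P_{l_1})^{i_1}\cdots\mu(P_{l_k})^{i_k}$.
\item In the right slot the product is the shuffle used in \eqref{tensor1}.
\end{itemize}
One must check that in $\calM_{n,n}(A)\otimes\ncs{A}{\calX}$, with matrices commuting with series coefficients, one has $e^{\mu(P_l)S_l}=\sum_i \mu(P_l)^i S_l^{\shuffle i}/i!$. Ordered products of these exponentials then expand, by the PBW structure, to $\sum_w\mu(P_w)S_w$, which equals $\sum_w\mu(w)w$ by duality of bases. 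The $\phi$-deformed case runs identically with $\{\Pi_w\},\{\Sigma_w\}$ and ${\shuffle}_\phi$. Convergence of the infinite ordered product follows from the ultrametric topology \eqref{distance}, since the $S_l$ are homogeneous of degree $|l|$.

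The main obstacle is making precise in which algebra the exponentials live, i.e.\ that the "product" in the series slot is shuffle (as in \eqref{tensor1}) while in the matrix slot it is ordinary matrix multiplication. I would state this by defining the product on $\calM_{n,n}(A)\otimes\ncs{A}{\calX}$ as $(m_1\otimes s_1)(m_2\otimes s_2)=m_1m_2\otimes s_1\shuffle s_2$, and observe that $\mu\otimes\mathrm{Id}$ intertwines it with \eqref{tensor1}. I would accept that "$e^{\mu(P_l)S_l}$" in the statement means this shuffle exponential.

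For item 2, write $M(x)=N(x)+D(x)$ with $D$ diagonal and $N$ strictly upper triangular. Since $M(\calX^*)=(M(\calX))^*$ (Kleene star of the matrix letter series), the standard star identity $(D+N)^*=(D^*N)^*D^*$ gives $M(\calX^*)=(D(\calX^*)N(\calX))^*D(\calX^*)$; this identity is proved by solving $T=1+(D+N)T$ by uniqueness of star solutions. Here $D(\calX^*)=D(\calX)^*$ is diagonal with entries the stars of the diagonal letter series. The product of a diagonal matrix with a strictly upper triangular one is strictly upper triangular, hence nilpotent of order at most $n$. The star therefore truncates to $I_n+\dots+(D(\calX^*)N(\calX))^k$ with $k\le n-1$. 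Multiplying by $\nu$ and $\eta$ gives the formula.
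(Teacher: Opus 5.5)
Your proposal is correct and follows the paper's own route. For item 1, the paper observes $M(\calX^*)=(\mathrm{Id}\otimes\mu)\calD_{\calX}$ and pushes the MRS factorization of Proposition \ref{diagonal} through $\mu$; you do the same with the tensor slots swapped, and you make explicit that the exponentials are shuffle-exponentials in the series slot. For item 2, the paper invokes Lazard factorization, which is exactly your star identity $(D+N)^*=(D^*N)^*D^*$ combined with nilpotency of strictly upper triangular matrices.

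The only precision to add concerns the $Y$ case: convergence of the ordered product should be argued with the weight grading (or coefficientwise), since the $\Sigma_l$ are homogeneous in weight but not in length.
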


\begin{proof}
\begin{enumerate}
\item By \eqref{diagonal}, one has  $M(\calX^*)=
(\mathrm{Id}\otimes\mu)\calD_{\calX}$ (resp. $M(Y^*)=(\mathrm{Id}\otimes\mu)\calD_Y$).

\item By Lazard factorization \cite{lothaire}, it follows the expected results.
\end{enumerate}
\end{proof}

\begin{proposition}[\cite{CM}]\label{PQ}
With the notations of Theorem \ref{KS}, for any $1\le i\le n$, let 
$e_i\in\calM_{1,n}(A)$ and ${}^te_i=\begin{matrix}(0&\ldots&1\!\!\!_{_{_{\displaystyle\uparrow i}}}&\ldots&0)\end{matrix}$, $G_i$ (resp $D_i$) belong to $\ncs{A^{\mathrm{rat}}}{\calX}$ admitting a linear representation of dimension $n$ $(\nu,\mu,e_i)$ (resp. $({}^te_i,\mu,\eta)$).
Then
\begin{eqnarray*}
\Delta_{\conc}S=\sum_{1\le i\le n}G_i\otimes D_i.
\end{eqnarray*}
\end{proposition}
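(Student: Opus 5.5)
The plan is to verify the identity coefficientwise, using the pairing \eqref{pairing} extended to $\ncs{A}{\calX^*\otimes\calX^*}$. By \eqref{D3} and the formula $\Delta_{\conc}w=\sum_{uv=w}u\otimes v$, we have
\[
\Delta_{\conc}S=\sum_{w\in\calX^*}\scal{S}{w}\sum_{uv=w}u\otimes v=\sum_{u,v\in\calX^*}\scal{S}{uv}\,u\otimes v .
\]
The coefficient of $u\otimes v$ in $\Delta_{\conc}S$ is therefore $\scal{S}{uv}$. On the other side, the coefficient of $u\otimes v$ in $\sum_{1\le i\le n}G_i\otimes D_i$ is $\sum_{i}\scal{G_i}{u}\scal{D_i}{v}$. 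The whole proof thus reduces to the identity
\[
\scal{S}{uv}=\sum_{1\le i\le n}\scal{G_i}{u}\scal{D_i}{v}
\]
for all $u,v\in\calX^*$.

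To establish it, I use Theorem \ref{KS}: $\scal{S}{uv}=\nu\mu(uv)\eta=\nu\mu(u)\mu(v)\eta$, since $\mu$ is a morphism of monoids. I then insert the identity matrix written as $\mathrm{I}_n=\sum_{i=1}^n e_i\,{}^te_i$, where $e_i$ is the $i$-th column vector and ${}^te_i$ the corresponding row vector. This gives
\[
\nu\mu(u)\mu(v)\eta=\sum_{i=1}^n\big(\nu\mu(u)e_i\big)\big({}^te_i\mu(v)\eta\big).
\]
By definition of the linear representations $(\nu,\mu,e_i)$ and $({}^te_i,\mu,\eta)$, the two factors are exactly $\scal{G_i}{u}$ and $\scal{D_i}{v}$. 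Each factor is a scalar ($1\times1$ matrix), so commutativity of $A$ allows the product to be written as stated.

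The only delicate point is bookkeeping rather than mathematics. The statement's formats for $e_i\in\calM_{1,n}(A)$ and $\nu\in\calM_{n,1}(A)$ in Theorem \ref{KS} appear transposed relative to the usage $\nu\mu(w)\eta$. I will fix the consistent convention that $\nu$ is a row vector, $\eta$ and $e_i$ are column vectors, and ${}^te_i$ is a row vector, as in \eqref{sweedler}, so that all the products above are well defined.

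I also need to justify that the finite sum $\sum_i G_i\otimes D_i$, viewed as an element of $\ncs{A}{\calX^*\otimes\calX^*}$, is determined by its coefficients on the $u\otimes v$. This holds by construction of that series space (as a function on $\calX^*\times\calX^*$), so equality of all coefficients gives equality of the series. Rationality of $G_i$ and $D_i$ is immediate from Theorem \ref{KS}, since they are given by linear representations.
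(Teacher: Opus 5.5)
Your proposal is correct and is essentially the paper's own argument: you compare coefficients of $u\otimes v$, write $\scal{S}{uv}=\nu\mu(u)\mu(v)\eta$, and split this product through $\mathrm{I}_n=\sum_{i}e_i\,{}^te_i$ to obtain $\sum_i\scal{G_i}{u}\scal{D_i}{v}$. Your choice of a consistent row/column convention for $\nu$, $\eta$ and $e_i$ is the right way to read the transposed formats given in Theorem \ref{KS} and in the statement.
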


\begin{proof}
Let us formulate the proof given in \cite{CM}, for any $u$ and $v\in\calX^*$, as follows
\begin{eqnarray*}
\scal{S}{uv}=\beta\mu(u)\mu(v)\eta=\sum_{1\le i\le n}(\nu\mu(u)e_i)({}^te_i\mu(v)\eta)=\sum_{1\le i\le n}\scal{G_i}{u}\scal{D_i}{v},\\
\scal{\Delta_{\tt conc}S}{u\otimes v}=\scal{S}{uv}=\sum_{1\le i\le n}\scal{G_i}{u}\scal{D_i}{v}=\sum_{1\le i\le n}\scal{G_i\otimes D_i}{u\otimes v}.
\end{eqnarray*}
Hence, extending by linearity, for any $P$ and $Q$ in $\ncp{A}{\calX}$, one obtains
\begin{eqnarray*}
\scal{S}{PQ}=\sum_{1\le i\le n}\scal{G_i}{P}\scal{D_i}{Q}
\end{eqnarray*}
and, using $\Delta_{\conc}$ in \eqref{Dconc}, it follows then the expected result.
\end{proof}

\begin{corollary}\label{Sweedler}
\begin{enumerate}
\item With the notations of Propositions \ref{linearrepresentation}--\ref{PQ}, one has
\begin{enumerate}
\item ${\ncs{A^{\mathrm{rat}}}\calX}$ is an unital $A$-algebra with respected to one of $\{\conc,\shuffle,{\shuffle}_{\phi}\}$.

\item The following criterion characterizes rational (or representative) series
\begin{eqnarray*}
S\in\ncs{A^{\mathrm{rat}}}{\calX}\iff
\Delta_{\conc}S=\sum_{i\in I_f}G_i\otimes D_i.
\end{eqnarray*}
\end{enumerate}

\item With the notations in Definition \ref{dec1bis}, the Sweedler's dual $\calH_{\shuffle}^{\circ}(\calX)$ (resp. $\calH_{{\shuffle}_{\phi}}^{\circ}(Y)$) is isomorphic to the bialgebra $(\ncs{K^{\mathrm{rat}}}{\calX},\shuffle,1_{\calX^*},\Delta_{\conc})$ (resp. $(\ncs{K^{\mathrm{rat}}}{Y},{\shuffle}_{\phi},1_{Y^*},\Delta_{\conc})$) of rational (or representative) series.
\end{enumerate}
\end{corollary}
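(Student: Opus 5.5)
For item (1a) I would invoke Proposition \ref{linearrepresentation} directly. By Definition \ref{rational}, $\ncs{A^{\mathrm{rat}}}{\calX}$ is closed under sum, concatenation and Kleene star, and it contains $1_{\calX^*}$, which has the representation $(1,0,1)$ of dimension $1$. The tensor-product representations given for $R_1\shuffle R_2$ and $R_1{\shuffle}_{\phi}R_2$ then show closure under $\shuffle$ and ${\shuffle}_{\phi}$, with $1_{\calX^*}$ as unit. Associativity, commutativity and bilinearity are inherited from the ambient algebra $\ncs{A}{\calX}$ (resp. $\ncs{A}{Y}$), since $\phi$ is assumed associative and commutative. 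So $\ncs{A^{\mathrm{rat}}}{\calX}$ is a unital subalgebra for each of the three products.

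For item (1b), the direction ($\Rightarrow$) is exactly Proposition \ref{PQ}. Theorem \ref{KS} supplies a representation $(\nu,\mu,\eta)$, and then $\Delta_{\conc}S=\sum_{i=1}^nG_i\otimes D_i$ with $G_i,D_i$ rational.

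For ($\Leftarrow$), suppose $\Delta_{\conc}S=\sum_{i\in I_f}G_i\otimes D_i$. Pairing with $u\otimes v$ gives $\scal{S\rg u}{v}=\scal{S}{uv}=\sum_i\scal{G_i}{u}\scal{D_i}{v}$. Hence every right shift satisfies $S\rg u=\sum_i\scal{G_i}{u}D_i$, so all right shifts of $S$ lie in the $A$-module $\calM$ generated by $\{D_i\}_{i\in I_f}$. This module need not be shift-invariant, and that is the main obstacle.

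My first attempt would replace $\calM$ by the module $\calM'$ spanned by $\{S\rg u\}_{u\in\calX^*}$. This module is shift-invariant, because $(S\rg u)\rg v=S\rg(uv)$, and it is contained in the finitely generated module $\calM$. Over a field $K$, a subspace of a finite-dimensional space is finite-dimensional, so Theorem \ref{KS}(1) gives rationality. Equivalently, the Hankel matrix has finite rank, which is Fliess' criterion.

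Over a general ring $A\supseteq\Q$, submodules of finitely generated modules need not be finitely generated. There I would read the hypothesis as in Definition \ref{dec1bis}, requiring the $G_i,D_i$ to be representative themselves. One can then argue recursively: Remark \ref{dec2}(2) transfers finite generation of the shift-invariant modules from the $D_i$ to $S$. Alternatively, one assumes $A$ Noetherian, which covers the field case needed in item (2).

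For item (2), take $A=K$. Definition \ref{dec1bis} characterizes membership in $\calH_{\shuffle}^{\circ}(\calX)$ by a finite decomposition of $\Delta_{\conc}S$ into elements of the same set. By (1b), such $S$ are rational. Conversely, rational $S$ admit the decomposition of Proposition \ref{PQ}, and there the $G_i,D_i$ are rational as well. So a coinductive argument identifies the two sets, namely as the largest family of series stable under this decomposition.

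The algebra structure on $\calH_{\shuffle}^{\circ}(\calX)$ is the transpose of $\Delta_{\shuffle}$, which by duality is $\shuffle$, and its coproduct is the transpose of $\conc$, which is $\Delta_{\conc}$. By (1a), $\shuffle$ preserves rationality, and the unit is $1_{\calX^*}$. Hence the identity map $S\mapsto S$ is a bialgebra isomorphism onto $(\ncs{K^{\mathrm{rat}}}{\calX},\shuffle,1_{\calX^*},\Delta_{\conc})$. The same argument with ${\shuffle}_{\phi}$ and $\Delta_{{\shuffle}_{\phi}}$ handles the $Y$ case, using the dualizability of $\phi$.
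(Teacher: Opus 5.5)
Your proposal is correct and follows the paper's skeleton: Proposition~\ref{linearrepresentation} for (1a), Proposition~\ref{PQ} for the forward implication in (1b), and (1b) with Definition~\ref{dec1bis} for (2). The one real difference is in (1b). The paper's proof only cites Proposition~\ref{PQ}, which gives $\Rightarrow$ alone. You actually prove $\Leftarrow$, via $S\rg u=\sum_i\scal{G_i}{u}D_i$ and Theorem~\ref{KS}(1). Over a field this is Fliess' finite-rank Hankel criterion, and it is exactly what item (2) needs.

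Your caution about general rings is justified: if the $G_i,D_i$ are arbitrary series, the equivalence fails over non-Noetherian $A\supseteq\Q$. Take $A=\Q[u]+t\,\Q[t,u,u^{-1}]\subset\Q(u,t)$ and one letter $x$. Put $G=D=\sum_{n\ge0}tu^{-n}x^n$ and $S=\sum_{n\ge0}t^2u^{-n}x^n$, so that $\Delta_{\conc}S=G\otimes D$. A representation of $S$ over $A$ would give, by Cayley--Hamilton, a monic equation for $u^{-1}$ over $A$, making $u$ a unit of $A$. That is impossible, because killing the ideal $t\,\Q[t,u,u^{-1}]$ is a ring morphism $A\to\Q[u]$ sending $u$ to a non-unit. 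So one must require the $G_i,D_i$ to be rational, as in the notation of Proposition~\ref{PQ}, or take $A$ Noetherian.

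In the rational case no recursion is needed. The right shifts of all the $D_i$ generate a finitely generated shift-invariant module containing every $S\rg u$, and Theorem~\ref{KS}(1) concludes. This is cleaner than citing Remark~\ref{dec2}(2), whose ``only if'' half fails for non-minimal decompositions (e.g.\ $0=G\otimes0$ with $G$ not rational).

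Finally, your greatest-fixed-point reading of Definition~\ref{dec1bis} is the right one. A least-fixed-point reading would produce only $\{0\}$.
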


\begin{proof}
\begin{enumerate}
\item These are consequences of Propositions \ref{linearrepresentation}--\ref{PQ}, respectively.

\item For $A=K$, the previous criterion and Definition \ref{dec1bis} lead to the expected results (see also Remark \ref{dec2}). 
\end{enumerate}
\end{proof}

\subsection{Kleene stars of the plane}
\begin{definition}\label{cylindric}
\begin{enumerate}
\item For any $(\alpha_x)_{x\in\calX}$ be a family of coefficients in $A$.
Then the following series is called a plane
\begin{eqnarray*}
L=\sum\limits_{x\in\calX}\alpha_xx&\in&\widehat{A.\calX}
\end{eqnarray*}
and the series $R=L^*$ is called Kleene star of the plane.

\item Any series $S\in\ncs{A}{{\calX}}$ is said to be
\begin{enumerate}
\item\label{syntacticallyexchangeable} syntactically exchangeable if and only if it is constant on multi-homo\-geneous classes, \textit{i.e.}
\begin{eqnarray*}
(\forall u,v\in\calX^*)([(\forall x\in\calX)(|u|_x=|v|_x)]&\Rightarrow&\scal{S}{u}=\scal{S}{v}).
\end{eqnarray*}
The set of syntactically exchangeable series is denoted by $\ncs{A_{\mathrm{exc}}^{\mathrm{synt}}}{\calX}$.

\item\label{rationallylyexchangeable} rationally exchangeable if and only if it admits a representation $(\nu,\mu,\eta)$
such that the matrices $\{\mu(x)\}_{x\in \calX}$ commute and the set of these series, a shuffle
subalgebra of $\ncs{A}{X}$, is denoted by $\ncs{A_{\mathrm{exc}}^{\mathrm{rat}}}{\calX}$.
\end{enumerate}
\end{enumerate}
\end{definition}

\begin{remark}
\begin{enumerate}
\item $S$ is syntactically exchangeable if and only if it is of the form\footnote{Recall that $\calX$ could be infinite (see \eqref{alphabets}) and the support of the map $\alpha:\calX\longrightarrow\N$ is finite.}
\begin{eqnarray*}
S=\sum_{\alpha\in\N^{(\calX)},\mathrm{supp}(\alpha)=\{x_1,\ldots,x_k\}}
s_{\alpha}x_1^{\alpha(x_1)}\shuffle\ldots\shuffle x_k^{\alpha(x_k)}.
\end{eqnarray*}

\item For $A=K$ is a field, the rational exchangeable series (Definition \ref{cylindric}.\ref{rationallylyexchangeable}) are exactly those that admit a representation with commuting matrices (at least the minimal one is such) and it is taken as definition as, even for rings, implying syntactic exchangeability (Definition \ref{cylindric}.\ref{syntacticallyexchangeable}). 
\end{enumerate}
\end{remark}

\begin{theorem}[See \cite{Ngo2,CM}]\label{exchangeable}
\begin{enumerate}
\item\label{ratex1} In all cases, one has
\begin{eqnarray*}
\ncs{A_{\mathrm{exc}}^{\mathrm{rat}}}{\calX}
\subset\ncs{A^{\mathrm{rat}}}{\calX}\cap\ncs{A_{\mathrm{exc}}^{\mathrm{synt}}}{\calX}.
\end{eqnarray*}
The equality holds when $A$ is a field and, letting
\begin{eqnarray*}
\ncs{A_{\mathrm{fin}}^{\mathrm{rat}}}{Y}
=\bigcup\limits_{F\subset_{finite}Y}\ncs{A^{\mathrm{rat}}}{F},
\end{eqnarray*}
the algebra of series over finite subalphabets, one has\footnote{The last inclusion of Item \ref{ratex1} is strict as shows the example of the following identity, living in $\ncs{A_{\mathrm{exc}}^{\mathrm{rat}}}{Y}$ but not in $\ncs{A_{\mathrm{exc}}^{\mathrm{rat}}}{Y}\cap \ncs{A_{\mathrm{fin}}^{\mathrm{rat}}}{Y}$
\begin{eqnarray*}
(ty_1+t^2y_2+\ldots)^*
=\lim\limits_{k\rightarrow+\infty}(ty_1+\ldots+t^ky_k)^*
=\lim\limits_{k\rightarrow+\infty}(ty_1)^*\shuffle\ldots\shuffle(t^ky_k)^*
=\mathop{\Huge\shuffle}_{k\ge1}(t^ky_k)^*.
\end{eqnarray*}}
\begin{eqnarray*}
\ncs{A_{\mathrm{exc}}^{\mathrm{rat}}}{X}
&=&\mathop{\Huge\shuffle}\limits_{x\in X}\ncs{A^{\mathrm{rat}}}{x},\\
\ncs{A_{\mathrm{exc}}^{\mathrm{rat}}}{Y}\cap\ncs{A_{\mathrm{fin}}^{\mathrm{rat}}}{Y}
&=&\bigcup\limits_{k\ge0}\mathop{\Huge\shuffle}\limits_{j=1}^k\ncs{A^{\mathrm{rat}}}{y_j}
\subsetneq\ncs{A_{\mathrm{exc}}^{\mathrm{rat}}}{Y}.
\end{eqnarray*}

\item\label{Kronecker}
For any $x\in\calX$, one has
$\ncs{A^{\mathrm{rat}}}{x}=\{P(1-xQ)^{-1}\}_{P,Q\in A[x]}$
and if $A=K$ is an algebraically closed field of characteristic zero then\footnote{It can be rephrased in terms of stars as
$\ncs{A^{\mathrm{rat}}}{x}=\{P(xQ)^*\}_{P,Q\in A[x]}$ holds for any ring and is then characteristic free, unlike the shuffle version requiring algebraic closure and denominators.
}
\begin{eqnarray*}
\ncs{K^{\mathrm{rat}}}{x}=\span_K\{(ax)^*\shuffle\ncp{K}{x}\vert a\in K\}.
\end{eqnarray*}

\item\label{conccharacter} Any Kleene star of the plane is $\conc$-character and conversely.

\item\label{indepchargen}
$A$ is supposed without zero divisors. If the family $(\phi_i)_{i\in I}$is $\Z$-linearly independent within $\widehat{A\calX}$ then the
family $\Lyn\calX\uplus\{\phi_i^*\}_{i\in I}$ is $A$-algebraically free within $(\ncs{A^{\mathrm{rat}}}{\calX},\shuffle,1_{\calX^*})$. 

\item\label{indepchar} In particular, $\{x^*\}_{x\in\calX}$ (resp. $\{y^*\}_{y\in Y}$) are algebraically independent over $(\ncp{A}{\calX},\shuffle,1_{\calX^*})$ (resp. $(\ncp{A}{Y},{\shuffle}_{\phi},1_{Y^*})$)
within $(\ncs{A^{\mathrm{rat}}}{\calX},\shuffle,1_{\calX^*})$
(resp. $(\ncs{A^{\mathrm{rat}}}{Y},{\shuffle}_{\phi},1_{Y^*})$).
\end{enumerate}
\end{theorem}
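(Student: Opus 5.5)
I would prove the five items of Theorem \ref{exchangeable} in turn, relying throughout on the Kleene--Sch\"utzenberger characterization (Theorem \ref{KS}) and the constructions of Proposition \ref{linearrepresentation}.

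For Item \ref{ratex1}, take $S$ with a representation $(\nu,\mu,\eta)$ whose matrices $\{\mu(x)\}$ commute. Then $\mu(u)=\mu(v)$ as soon as $|u|_x=|v|_x$ for all $x$, so $\scal{S}{u}=\nu\mu(u)\eta=\scal{S}{v}$. Hence $S$ is both rational and syntactically exchangeable, which gives the inclusion. For equality over a field $K$, I would start from a rational syntactically exchangeable $S$ and take its minimal representation, obtained by shifts as in Theorem \ref{KS}. The row space is spanned by the right shifts $S\rg w$, and exchangeability gives $S\rg(xyw)=S\rg(yxw)$ for all $w$. Therefore $\mu(xy)$ and $\mu(yx)$ act identically on the minimal module, and by minimality they are equal. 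For the shuffle decomposition, commuting matrices over $X$ give
\begin{eqnarray*}
S=\nu\Big(\sum_{x\in X}\mu(x)x\Big)^*\eta.
\end{eqnarray*}
Since the matrices commute, the star of a sum of commuting terms factors as a shuffle of stars, $(\sum\mu(x)x)^*=\shuffle_x(\mu(x)x)^*$. Expanding the matrix entries then puts $S$ in $\shuffle_{x}\ncs{A^{\mathrm{rat}}}{x}$. Over $Y$ with finite support the same argument applies to the finitely many letters involved. Strictness is witnessed by the footnote example $(ty_1+t^2y_2+\cdots)^*$, which involves infinitely many letters.

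For Item \ref{Kronecker}, over a single letter the series are commutative, and a rational series in $A[[x]]$ is exactly $P/(1-xQ)$; this is the classical Kronecker criterion, obtained from the characteristic polynomial of $\mu(x)$ via Cayley--Hamilton. Over an algebraically closed $K$, I would decompose the denominator into partial fractions, giving terms $(1-ax)^{-k}$. Each such term equals $(ax)^*\shuffle(\text{polynomial})$, because $(ax)^{*\shuffle k}=(kax)^*$ and $x^n\shuffle(ax)^*$ produces the factors $\binom{n+j}{n}a^jx^{n+j}$.

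For Item \ref{conccharacter}, a character satisfies $\scal{S}{uv}=\scal{S}{u}\scal{S}{v}$ and $\scal{S}{1}=1$. Setting $\alpha_x=\scal{S}{x}$ gives $\scal{S}{x_1\cdots x_n}=\prod\alpha_{x_i}=\scal{L^*}{x_1\cdots x_n}$ with $L=\sum_x\alpha_xx$. Conversely, $L^*$ is visibly multiplicative, and by Proposition \ref{proposition2} this is equivalent to being grouplike for $\Delta_{\conc}$.

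For Items \ref{indepchargen}--\ref{indepchar}, the main obstacle is algebraic independence. My approach uses the identity $\phi_i^*\shuffle\phi_j^*=(\phi_i+\phi_j)^*$, valid for planes since they are commutative letter-wise. This makes monomials in the $\phi_i^*$ equal to stars $(\sum n_i\phi_i)^*$ of distinct planes, by $\Z$-independence. A polynomial relation then reads $\sum_k Q_k\shuffle\psi_k^*=0$ with distinct planes $\psi_k$ and $Q_k\in\ncp{A}{\calX}$. I would show that $\{\psi_k^*\}$ are linearly independent over the shuffle algebra of polynomials. To do this, apply the derivations $x\rd\cdot$, which satisfy $x\rd\psi^*=\alpha_x\psi^*$ and lower the degree of $Q_k$. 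Then induct on the maximal degree of the $Q_k$, using a Vandermonde-type argument that separates distinct characters. This is where the hypothesis that $A$ has no zero divisors enters. Combined with Radford's theorem that $\Lyn\calX$ is a transcendence basis, this yields joint freeness. Item \ref{indepchar} is the special case $\phi_x=x$; for ${\shuffle}_{\phi}$, I would transport the argument via the isomorphism of Theorem \ref{isomorphy}.
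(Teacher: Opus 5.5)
Your proposal is correct. On Items 1--3 it essentially follows the paper: commuting matrices in a minimal representation built from shifts, the factorization $M(X^*)=\shuffle_{x\in X}(\mu(x)x)^*$, and a direct coefficient computation for concatenation characters. For Item 2 you go further and prove what the paper only cites as Kronecker's theorem, via Cayley--Hamilton and partial fractions. The identity you need there is the Cauchy-power formula $((ax)^*)^k=(ax)^*\shuffle(1+ax)^{k-1}$ of Corollary \ref{star}; the shuffle power $((ax)^*)^{\shuffle k}=(kax)^*$ plays no role.

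The real divergence is in Items 4--5. You and the paper both turn monomials in the $\phi_i^*$ into stars of pairwise distinct planes using $\Z$-independence, and both finish with Radford's theorem. The difference is the central fact that distinct stars of planes are linearly independent over the polynomials. The paper invokes the general result of \cite{DGM} that characters of an enveloping algebra are linearly independent over its convolution algebra, applied at once to the $\shuffle$ and $\shuffle_{\phi}$ settings. You prove it by hand, using that $S\mapsto x\rd S$ is a $\shuffle$-derivation acting on $\psi^*$ by the scalar $\scal{\psi}{x}$ and lowering the degree of polynomials.

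Your route is elementary and self-contained, and it shows exactly where the absence of zero divisors is used: injectivity of $Q\mapsto x\rd Q+cQ$ on $\ncp{A}{\calX}$ for $c\neq0$. The paper's route is conceptual and covers $\shuffle_{\phi}$ at no extra cost. You instead must transport through the transpose of $\varphi_{\pi_1}$. This is an algebra isomorphism from $(\ncs{A}{Y},\shuffle_{\phi})$ onto $(\ncs{A}{Y},\shuffle)$ that preserves polynomials and concatenation characters. It sends $y_k^*$ to $\psi_k^*$ with $\psi_k=\sum_j\scal{y_k^*}{\pi_1y_j}\,y_j=y_k+(\mbox{letters of higher weight})$. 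You should add the easy triangularity check that these $\psi_k$ are still $\Z$-independent.

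Finally, run your induction on $\sum_k(1+d_k)$, where $d_k$ is the degree of $Q_k$, rather than on the maximal degree alone. Choose $x$ with $\scal{\psi_1}{x}\neq\scal{\psi_2}{x}$ and apply $S\mapsto x\rd S-\scal{\psi_1}{x}S$. This lowers only the first term while keeping the second nonzero, so the maximal degree need not drop, but the sum does.
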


\begin{proof}
\begin{enumerate}
\item The inclusion is obvious in view of Definition \ref{cylindric}. For the equality, it suffices to prove that, when $A$ is a field,  every rational and exchangeable series admits a representation with commuting matrices. This is true of any minimal representation as shows the computation of shifts (see \cite{DR,Ngo2,CM}). Now, if $\calX$ is finite, then (all matrices commute, see also Corollary \ref{factorized})
\begin{eqnarray*}
M(\calX^*)=\mathop{\Huge\shuffle}_{x\in\calX}(\mu(x)x)^*
\end{eqnarray*}
and the result comes from the fact that $R$ is a linear combination of matrix elements. As regards the second equality, inclusion $\supset$ is straightforward. We remark that $\cup_{k\ge1}\ncs{A^{\mathrm{rat}}}{y_1}\shuffle\ldots\shuffle\ncs{A^{\mathrm{rat}}}{y_k}$ is directed as these algebras are nested in one another. With this in view, the reverse inclusion comes from the fact that every
$S\in\ncs{A_{\mathrm{fin}}^{\mathrm{rat}}}{Y}$ is a series over a finite alphabet and the result follows from the first equality.

\item This is a Kronecker's theorem \cite{zygmund}, rephrased in terms and notations of \cite{berstel}.

\item With the notations in Definition \ref{rationallylyexchangeable}, if $R$ is a Kleene star of the phan then $\scal{R}{1_{\calX^*}}=1_A$. Furthermore, if $w=xu$ then
$\scal{R}{xu}=\alpha_x\scal{R}{u}$. Thus, by induction on the length,
$\scal{R}{x_1\ldots x_k}=\alpha_{x_1}\times\cdots\times\alpha_{x_k}$
showing that $R$ is a $\conc$-character. Conversely (Sch\"utzenberger's reconstruction lemma)
\begin{eqnarray*}
R=\scal{R}{1_{\calX^*}}.1_A+\sum_{x\in\calX}x.x^{-1}R.
\end{eqnarray*}
But, if $S$ is a $\conc$-character, \textit{i.e.}
$\scal{R}{1_{\calX^*}}=1$ and $x^{-1}R=\scal{R}{x}R$,
then the previous expression reads, as follows, proving the claim
\begin{eqnarray*}
R=1_A+\Big(\sum_{x\in\calX}\scal{R}{x}x\Big)R,&\mbox{or equivalently,}&
R=\Big(\sum_{x\in\calX}\scal{R}{x}x\Big)^*.
\end{eqnarray*}

\item As $(\ncp{A}{\calX},\shuffle)$ and $(\ncp{A}{Y},{\shuffle}_{\phi})$ are enveloping algebras, this property is an application of the fact that, on an enveloping  $\mathcal{U}$, the characters are linearly independent with respect to the convolution algebra $\mathcal{U}^*_{\infty}$ (see the general construction and proof in \cite{DGM}. Here, this  convolution algebra ($\mathcal{U}^*_{\infty}$) contains the polynomials (is equal in case of finite $\calX$). Now, consider a monomial
\begin{eqnarray*}
(\psi_{i_1}^*)^{\shuffle\alpha_1}\ldots(\psi_{i_n}^*)^{\shuffle\alpha_n}=(\alpha_{i_1}\psi_{i_1}+\cdots+\alpha_{i_n}\psi_{i_n})^*.
\end{eqnarray*}
The $\Z$-linear independence of the monomials in $(\psi_i)_{i\in I}$ implies that all these monomials are linearly independent over $\ncp{A}{\calX}$ which proves algebraic independence of the family $(\psi_i)_{i\in I}$.

To end with, the fact that $\Lyn\calX\uplus\{\psi_i^*\}_{i\in I}$ is algebraically free comes from the transitivity of polynomial algebras (see \cite{B_Alg1}) and a Radford's theorem, \textit{i.e.} $(\ncp{A}{\calX},\shuffle,1_{\calX^*})\simeq A[\Lyn\calX]$.

\item Comes directly as an application of the preeceding point.
\end{enumerate}
\end{proof}

\begin{proposition}\label{simple}
Let $\alpha_x,\beta_x,a_s,b_s$ be complex numbers ($x\in\calX$ and $s\ge1$). Then\footnote{By \eqref{explog}, one has $\exp_{\shuffle}A\exp_{\shuffle}B=\exp_{\shuffle}(A+B)$ but it is more complicated for ${\shuffle}_{\phi}$.}
\begin{eqnarray*}
\Big(\sum_{x\in\calX}\alpha_xx\Big)^*\shuffle\Big(\sum_{x\in\calX}\beta_xx\Big)^*
&=&\Big(\sum_{x\in\calX}(\alpha_x+\beta_x)x\Big)^*,\cr
\Big(\sum_{s\ge1}a_sy_s\Big)^*{\shuffle}_{\phi}\Big(\sum_{s\ge1}b_sy_s\Big)^*
&=&\Big(\sum_{s\ge1}(a_s+b_s)y_s+\sum_{r,s\ge1}a_sb_r\phi(y_s,y_r)\Big)^*.
\end{eqnarray*}
\end{proposition}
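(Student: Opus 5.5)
The plan is to use Theorem \ref{exchangeable}.\ref{conccharacter}: a series is a Kleene star of the plane if and only if it is a $\conc$-character. It then suffices to show that each right-hand side is a $\conc$-character whose coefficients on letters are the announced ones. Indeed, writing $R=\big(\sum_x\alpha_xx\big)^*$ and $R'=\big(\sum_x\beta_xx\big)^*$, we have $\scal{R}{w}=\alpha_{x_1}\cdots\alpha_{x_k}$ for $w=x_1\cdots x_k$. By Proposition \ref{proposition2}, $R$ and $R'$ are then grouplike for $\Delta_{\conc}$, that is, $\Delta_{\conc}R=R\otimes R$.

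The key step is that $\Delta_{\conc}$ is a morphism for $\shuffle$ (resp. ${\shuffle}_{\phi}$). Dually, this is the statement that $\Delta_{\shuffle}$ (resp. $\Delta_{{\shuffle}_{\phi}}$) is a $\conc$-morphism, which was recorded in Section \ref{coproducts}. From it we get
\begin{eqnarray*}
\Delta_{\conc}(R\shuffle R')=(R\shuffle R')\otimes(R\shuffle R').
\end{eqnarray*}
To justify this at the level of series, one pairs both sides against $u\otimes v$ and uses only finitely many terms, which is legitimate because $\scal{R\shuffle R'}{w}$ is a finite sum for each $w$. Moreover $\scal{R\shuffle R'}{1_{\calX^*}}=1$. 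Hence $R\shuffle R'$ is a $\conc$-character, and by Theorem \ref{exchangeable}.\ref{conccharacter} it equals $\big(\sum_x\scal{R\shuffle R'}{x}x\big)^*$.

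It remains to read off the coefficients on letters. For $\shuffle$ we have $\scal{R\shuffle R'}{x}=\alpha_x+\beta_x$, since $\Delta_{\shuffle}x=x\otimes1+1\otimes x$, and this gives the first identity. For ${\shuffle}_{\phi}$ we compute $\scal{R{\shuffle}_{\phi}R'}{y_k}=\scal{R\otimes R'}{\Delta_{{\shuffle}_{\phi}}y_k}$ using \eqref{Dstuffle}, which gives
\begin{eqnarray*}
a_k+b_k+\sum_{i,j}\gamma_{i,j}^k a_ib_j.
\end{eqnarray*}
This is exactly the coefficient of $y_k$ in $\sum_s(a_s+b_s)y_s+\sum_{r,s}a_sb_r\phi(y_s,y_r)$. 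The sum over $(i,j)$ is finite by the dualizability hypothesis, so this last series is a well-defined plane.

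The expected main obstacle is the morphism property of $\Delta_{\conc}$ with respect to ${\shuffle}_{\phi}$ on infinite sums. I would handle it through the bialgebra compatibility of $\calH^{\vee}_{{\shuffle}_{\phi}}(Y)$: the identity $\scal{u{\shuffle}_{\phi}v}{w_1w_2}=\sum\scal{u_1{\shuffle}_{\phi}v_1}{w_1}\scal{u_2{\shuffle}_{\phi}v_2}{w_2}$ follows from $\Delta_{{\shuffle}_{\phi}}$ being a $\conc$-morphism. Alternatively, a direct check of multiplicativity on words is available: $\scal{R{\shuffle}_{\phi}R'}{w_1w_2}=\scal{R\otimes R'}{\Delta_{{\shuffle}_{\phi}}(w_1)\Delta_{{\shuffle}_{\phi}}(w_2)}$, and this factors because $R\otimes R'$ is a $\conc$-character of the tensor algebra.
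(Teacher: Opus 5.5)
Your proof is correct. Its final step is the whole of the paper's proof: you read off $\scal{\cdot}{x}$ through $\Delta_{\shuffle}x=x\otimes1_{\calX^*}+1_{\calX^*}\otimes x$, and $\scal{\cdot}{y_k}$ through \eqref{Dstuffle}. The paper stops there, so it only shows that both sides agree on letters, tacitly assuming that the left-hand side is already a Kleene star of a plane.

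Your preliminary step supplies exactly that missing justification. You show that the $\shuffle$ (resp.\ ${\shuffle}_{\phi}$) product of two $\conc$-characters is again a $\conc$-character, because $\Delta_{\shuffle}$ (resp.\ $\Delta_{{\shuffle}_{\phi}}$) is a $\conc$-morphism and $R\otimes R'$ is multiplicative on the tensor square. Theorem \ref{exchangeable}.\ref{conccharacter} then lets you reduce the identity to letters. So your version closes a gap that the paper leaves implicit.

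Your word-by-word check $\scal{R\shuffle R'}{w_1w_2}=\scal{R\otimes R'}{\Delta_{\shuffle}(w_1)\Delta_{\shuffle}(w_2)}$ is the cleanest way to carry this out. Only finite sums occur, using dualizability of $\phi$ in the $Y$ case, so the detour through $\Delta_{\conc}$ applied to series is unnecessary.

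Your letter coefficient $a_k+b_k+\sum_{i,j}\gamma^k_{i,j}a_ib_j$ also keeps the structure constants of $\phi$. The paper's displayed computation drops them.
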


\begin{proof}
Let us use $\Delta_{\shuffle}$ (resp. $\Delta_{{\shuffle}_{\phi}}$) defined in (\ref{Dshuffle}) and then (\ref{D2}) (resp. (\ref{Dstuffle}) and then (\ref{D1})) and, for any $x_i\in\calX,y_t\in Y$, apply respectively (\ref{extenoverseries2}) and (\ref{extenoverseries3})
\begin{eqnarray*}
&&\scal{\Big(\sum_{x\in\calX}\alpha_xx\Big)^*\shuffle\Big(\sum_{x\in\calX}\beta_xx\Big)^*}{x_i}\cr
&&=\scal{\Big(\sum_{x\in\calX}\alpha_xx\Big)^*\otimes\Big(\sum_{x\in\calX}\beta_xx\Big)^*}{\Delta_{\shuffle}(x_i)}\cr
&&=\scal{\Big(\sum_{x\in\calX}\alpha_xx\Big)^*\otimes\Big(\sum_{x\in\calX}\beta_xx\Big)^*}{x_i\otimes1_{X^*}+1_{X^*}\otimes x_i}\cr
&&=\alpha_i+\beta_i\cr
&&=\scal{\Big(\sum_{x\in\calX}(\alpha_x+\beta_x)x\Big)^*}{x_i},\\
&&\scal{\Bigl(\sum_{s\ge1}a_sy_s\Bigr)^*{\shuffle}_{\phi}\Bigl(\sum_{s\ge1}b_sy_s\Bigr)^*}{y_t}\cr
&&=\scal{\Bigl(\sum_{s\ge1}a_sy_s\Bigr)^*\otimes\Bigl(\sum_{s\ge1}b_sy_s\Bigr)^*}{\Delta_{{\shuffle}_{\phi}}(y_t)}\cr
&&=\scal{\Bigl(\sum_{s\ge1}a_sy_s\Bigr)^*\otimes\Bigl(\sum_{s\ge1}b_sy_s\Bigr)^*}
{y_t\otimes1_{Y^*}+1_{Y^*}\otimes y_t+\sum_{r+s=t}\gamma_{s,r}^ty_s\otimes y_r}\cr
&&=a_t+b_t+\sum_{r+s=t}a_sb_r\\
&&=\scal{\Bigl(\sum_{s\ge1}(a_s+b_s)y_s+\sum_{r,s\ge1}a_sb_r\phi(y_s,y_r)\Bigr)^*}{y_t}.
\end{eqnarray*}
\end{proof}

\begin{example}
For $\shuffle_{\phi}=\stuffle$, since
\begin{eqnarray*}
\sum_{r,s\ge1}a_sb_ry_{s+r}=
\sum_{r>s\ge1}(a_sb_r+a_rb_s)y_{s+r}+\sum_{s\ge1}a_sb_sy_{2s}
\end{eqnarray*}
then, by Proposition \ref{simple}, one also has
\begin{eqnarray*}
\big(\sum_{s\ge1}a_sy_s\big)^*\stuffle\big(\sum_{s\ge1}b_sy_s\big)^*
=\big(\sum_{s\ge1}(a_s+b_s)y_s+a_sb_sy_{2s}
+\sum\limits_{r>s\ge1}(a_sb_r+a_rb_s)y_{s+r}\big)^*.
\end{eqnarray*}
In particular, one has (for $y_s,y_r\in Y$ and $a_s,a_r\in\C$)
\begin{eqnarray*}
(a_sy_s)^*\stuffle(a_ry_r)^*&=&(a_sy_s+a_ry_r+a_sa_ry_{s+r})^*,\\
(-a_sy_s)^*\stuffle(a_sy_s)^*&=&(-a_s^2y_{2s})^*.
\end{eqnarray*}
Then, for any $c\in\C$ and $y_k\in Y$, one has
\begin{eqnarray*}
(cy_k^*)^{\stuffle 2}=&(cy_k)^*\stuffle(cy_k)^*&=(2cy_k + c^2y_{2k})^*,\\
(cy_k^*)^{\stuffle 3}=&(2cy_k+c^2y_{2k})^*\stuffle (cy_k)^*&
=(3cy_k+3c^2y_{2k}+c^3y_{3k})^*.
\end{eqnarray*}
\end{example}

\begin{corollary}\label{star}
\begin{enumerate}
\item\label{deux} Let $k,n\in\N,c\in\C$ and $x\in\calX$. Then
\begin{eqnarray*}
\scal{(cx)^*\shuffle(1+cx)^n}{x^k}={n+k\choose k}c^k.
\end{eqnarray*}
\item\label{trois} Let $x\in\calX,y_k\in Y$ and $c\in\C,n\in\N_{\ge1}$. One has\footnote{\leftline{$((cx)^*)^n=\underbrace{(cx)^*\ldots(cx)^*}_{\mbox{$n$ times}}, ((cx)^*)^{\shuffle n}=\underbrace{(cx)^*\shuffle\ldots\shuffle(cx)^*}_{\mbox{$n-1$ times $\shuffle$}}, ((cy_k)^*)^{\stuffle n}=\underbrace{(cx)^*\stuffle\ldots\stuffle(cx)^*}_{\mbox{$n-1$ times $\stuffle$}}$.}}
\begin{eqnarray*}
((cx)^*)^{\shuffle n}=(ncx)^*,\quad
((cx)^*)^n=(cx)^*\shuffle(1+cx)^{n-1},\cr
((cy_k)^*)^{\stuffle n}=\Big(\sum_{i=1}^n\dbinom{n}{i}c^i y_{ik}\Big)^*=\mathop{\Huge\shuffle}_{i=1}^n\Big(\dbinom{n}{i}c^i y_{ik}\Big)^*.
\end{eqnarray*}
\item\label{quatre}
For any $k,m\in\N,x_1,\ldots,x_m\in\calX$ and $l_1,\ldots,l_m\in\N, l_1+\ldots+l_m=k$, let
\begin{eqnarray*}
P_k:=x_1^{l_1}\shuffle\cdots\shuffle x_m^{l_m}&\mbox{and}&L_k:=\mathrm{supp}(P_k).
\end{eqnarray*}
For any $n_1,\ldots,n_m\in\N,c,c_1,\ldots,c_m\in\C\setminus\{0\}$ and $w\in L_k$, one has\footnote{Recall that, for any positive integer $k$ and nonnegative integers
$m,n_1,n_2,\cdots,n_m$, the generalized Chu-Vandermonde's identity is expressed as follows \cite{stanley}
\begin{eqnarray*}
\sum_{l_1+l_2+\cdots+l_m=k}{n_1\choose l_1}{n_2\choose l_2}\cdots{n_m\choose l_m}={n_1+n_2+\cdots+n_m\choose k}.
\end{eqnarray*}},
\begin{eqnarray*}
\scal{\mathop{\Huge\shuffle}_{i=1}^m((c_ix_i)^*)^{n_i+1}}{w}
&=&\sum_{l_1+\ldots+l_m=k}{n_1+l_1\choose l_1}\cdots{n_m+l_m\choose l_m}
c_1^{l_1}\cdots c_m^{l_m},\cr
\scal{\mathop{\Huge\shuffle}_{i=1}^m((cx_i)^*)^{n_i+1}}{w}
&=&{n_1+\cdots+n_m+k\choose k}c^k.
\end{eqnarray*}
\end{enumerate}
\end{corollary}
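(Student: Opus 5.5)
The approach is to reduce everything to the one-letter case through the star identities of Proposition \ref{simple} and one elementary computation for a single letter. The basic object is $(cx)^*=\sum_{j\ge0}c^jx^j$, restricted to the subalphabet $\{x\}$. There the shuffle is $x^a\shuffle x^b=\binom{a+b}{a}x^{a+b}$, so $\ncs{A}{x}$ equipped with $\shuffle$ is the algebra of divided powers (equivalently, exponential generating functions), and concatenation corresponds to ordinary generating functions.

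\textbf{Item \ref{deux}.} Write $(1+cx)^n=\sum_{j}\binom{n}{j}c^jx^j$, concatenation powers coinciding with shuffle-divided ones only up to factorials. Then the coefficient of $x^k$ in $(cx)^*\shuffle(1+cx)^n$ is
\[
\sum_{j}\binom{n}{j}c^j\,c^{k-j}\binom{k}{j}=c^k\sum_j\binom{n}{j}\binom{k}{k-j}.
\]
By Chu--Vandermonde this equals $\binom{n+k}{k}c^k$.

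\textbf{Item \ref{trois}.} The first identity is Proposition \ref{simple} iterated, with $\alpha=\beta=c$. For the concatenation power, the coefficient of $x^k$ in $((cx)^*)^n$ counts compositions of $k$ into $n$ nonnegative parts, giving $\binom{n-1+k}{k}c^k$. This matches Item \ref{deux} with $n-1$ in place of $n$.

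For the $\stuffle$ power I would argue by induction on $n$ using Proposition \ref{simple} with $\phi(y_s,y_r)=y_{s+r}$, as in the example preceding the statement. Set $a=\sum_{i=1}^n\binom{n}{i}c^iy_{ik}$ and $b=cy_k$. The formula gives the star of $a+b+\sum_i\binom{n}{i}c^{i+1}y_{(i+1)k}$, and Pascal's rule yields $\sum_{i=1}^{n+1}\binom{n+1}{i}c^iy_{ik}$.

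The last equality (star of a sum of distinct letters equals the shuffle of the stars) is the first formula of Proposition \ref{simple}, applied to planes with disjoint supports. Here the letters $y_{ik}$ are treated as an alphabet on which $\shuffle$, not $\stuffle$, acts.

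\textbf{Item \ref{quatre}.} First I would use Item \ref{trois} to write $((c_ix_i)^*)^{n_i+1}=(c_ix_i)^*\shuffle(1+c_ix_i)^{n_i}$. Since $\shuffle$ is commutative and associative, the big shuffle becomes $\shuffle_i\big[(c_ix_i)^*\shuffle(1+c_ix_i)^{n_i}\big]$. Each factor is a series in the single letter $x_i$, with coefficient $\binom{n_i+l}{l}c_i^l$ on $x_i^l$ by Item \ref{deux}.

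A shuffle of series in distinct letters $x_1,\dots,x_m$ sends $x_1^{l_1}\otimes\cdots\otimes x_m^{l_m}$ to $x_1^{l_1}\shuffle\cdots\shuffle x_m^{l_m}$. This has coefficient $1$ on each word of its support when the $x_i$ are distinct, because each word arises from exactly one interleaving. So the coefficient of $w$ collects the terms whose multidegree equals that of $w$. This gives the first formula; I read the written sum over $l_1+\dots+l_m=k$ as that multidegree.

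Setting $c_i=c$ and applying the generalized Chu--Vandermonde identity (in upper-negation form, $\sum\prod\binom{n_i+l_i}{l_i}=\binom{\sum n_i+m-1+k}{k}$) produces the second formula. The main obstacle is exactly here: the bookkeeping of the $+m-1$ shift versus the stated $\binom{n_1+\cdots+n_m+k}{k}$, together with the interpretation of the summation range when letters repeat. I would first check small cases, such as $m=2$ and $k=1$, to fix the correct normalization before writing the general argument.
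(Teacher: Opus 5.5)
Items 1 and 2 are correct and follow the paper's route. The one minor difference is that for $((cx)^*)^n$ you count weak compositions directly, where the paper inducts on $n$.

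The gap is in Item 3, and it cannot be closed by adjusting a normalization, because the second formula is false as stated. Your own computation already shows this. Once you read the first formula as the single term indexed by the multidegree $(l_1,\dots,l_m)$ of $w$, the coefficient of an individual $w$ is $c^k\prod_i\binom{n_i+l_i}{l_i}$. That value depends on the multidegree, not only on $k$. No sum remains for Chu--Vandermonde to act on, so your sentence ``produces the second formula'' does not follow.

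A concrete counterexample: take $m=2$, $x_1\neq x_2$, $n_1=1$, $n_2=0$, $k=1$, $w=x_2$. Then $\scal{((cx_1)^*)^2\shuffle(cx_2)^*}{x_2}=c$, whereas the statement claims $\binom{2}{1}c=2c$. Read literally, the first formula would also give $2c_1+c_2$ instead of $c_2$.

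If you sum over all multidegrees instead, you get $\binom{n_1+\cdots+n_m+m-1+k}{k}c^k$, as you found. This agrees with the claimed value only when $m=1$ or $k=0$. With repeated letters the first formula fails too. For example, $(cx)^*\shuffle(cx)^*=(2cx)^*$ has coefficient $2^kc^k$ on $x^k$, so the multinomial factors $k!/(l_1!\cdots l_m!)$ are missing.

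The paper's proof makes exactly your expansion,
$\shuffle_i((c_ix_i)^*)^{n_i+1}=\sum_k\sum_{l_1+\cdots+l_m=k}\prod_i\binom{n_i+l_i}{l_i}c_i^{l_i}\,x_1^{l_1}\shuffle\cdots\shuffle x_m^{l_m}$,
and then reads off both formulas by citing the footnote identity $\sum\prod_i\binom{n_i}{l_i}=\binom{n_1+\cdots+n_m}{k}$. That identity needs upper indices independent of the $l_i$. Applying it to $\binom{n_i+l_i}{l_i}$ as if $n_i+l_i$ were constant is exactly what produces the spurious $\binom{n_1+\cdots+n_m+k}{k}$.

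So do not look for a normalization that rescues the statement. Prove what the computation actually gives:
\begin{itemize}
\item for pairwise distinct letters and $w$ of multidegree $(l_1,\dots,l_m)$, the coefficient is $\prod_i\binom{n_i+l_i}{l_i}c_i^{l_i}$, because each word arises from exactly one interleaving;
\item with $c_i=c$, summing over $l_1+\cdots+l_m=k$ gives $\binom{n_1+\cdots+n_m+m-1+k}{k}c^k$, the coefficient of $t^k$ in $(1-t)^{-(n_1+\cdots+n_m+m)}$.
\end{itemize}
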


\begin{proof}
\begin{enumerate}
\item By (\ref{extenoverseries2}), with
\begin{eqnarray*}
S=(cx)^*,&R=(1+cx)^n,&(cx)^k\shuffle(cx)^i={k+i\choose k}(cx)^{k+i},
\end{eqnarray*}
one has
\begin{eqnarray*}
(cx)^*\shuffle(1+cx)^n
=\sum_{k\ge0}\sum_{i=0}^n{k+i\choose k}{n\choose i}(cx)^{k+i}
=\sum_{k\ge0}\Big(\sum_{i=0}^n{k\choose k-i}{n\choose i}\Big)(cx)^k
\end{eqnarray*}
and the Chu-Vandermonde identity \cite{stanley} yields the expected result.

\item Since $(cx)^*=\exp_{\shuffle}(cx)$ then
$((cx)^*)^{\shuffle n}=\exp_{\shuffle}(ncx)=(ncx)^*$.
The two last identities are obvious for $n=1$ and supposed to hold, up to rank $n\ge1$. Next, by
\begin{eqnarray*}
((cx)^*)^{n+1}=(cx)^*((cx)^*)^n&\mbox{and}&
((cy_k)^*)^{\stuffle n+1}=(cy_k)^*\stuffle((cy_k)^*)^{\stuffle n}
\end{eqnarray*}
and then by induction hypothesis, one obtains
\begin{eqnarray*}
((cx)^*)^{n+1}
&=&(cx)^*((cx)^*)^n\cr
&=&(cx)^*\Big(\sum_{k\ge0}{n-1+k\choose k}(cx)^k\Big)\mbox{ (by Item \ref{deux})}\\
&=&\sum_{k\ge0}\Big(\sum_{l=0}^k{n-1+l\choose l}\Big)(cx)^k\mbox{ (by (\ref{extenoverseries1}))}\cr
&=&\sum_{k\ge0}{n+k\choose n}(cx)^k\mbox{ (by the Chu-Vandermonde identity)}\\
&=&(cx)^*\shuffle(1+cx)^n\mbox{ (by Item \ref{deux})},\\
(cy_k^*)^{\stuffle n+1}
&=&\Big(cy_k +\sum_{i=1}^n{n\choose i}c^iy_{ik}+\sum_{i=1}^n{n\choose i}c^{i+1}y_{(i+1)k}\Big)^*\cr
&=&\Big(cy_k+{n\choose 1}c y_k+\sum_{i=2}^n{n\choose i}c^iy_{ik}+ \sum_{i=2}^{n+1}{n\choose i-1}c^{i} y_{ik}\Big)^*\cr
&=&\Big({n+1\choose 1}c y_k+\sum_{i=2}^n\Big({n\choose i}+{n\choose i-1}\Big)c^i y_{ik}+{n+1\choose n+1}c^{n+1}y_{(n+1)k}\Big)^*\cr
&=&\Big({n+1\choose 1}cy_k+\sum_{i=2}^n{n+1\choose i}c^iy_{ik}+{n+1\choose n+1}c^{n+1}y_{(n+1)k}\Big)^*\cr
&=&\Big(\sum_{i=1}^{n+1}{n+1\choose i}c^i y_{ik}\Big)^*\cr
&=&\mathop{\Huge\shuffle}_{i=1}^n\Big({n\choose i}c^i y_{ik}\Big)^*
\mbox{ (by Proposition \ref{simple})}.
\end{eqnarray*}

\item By Items \ref{deux}--\ref{trois} and Proposition \ref{simple} , one gets
\begin{eqnarray*}
\mathop{\Huge\shuffle}_{i=1}^m((c_ix_i)^*)^{n_i+1}
&=&\mathop{\Huge\shuffle}_{i=1}^m(c_ix_i)^*\shuffle(1+c_ix_i)^{n_i}\\
&=&\mathop{\Huge\shuffle}_{i=1}^m
\sum_{l_i\ge0}{n_i+l_i\choose l_i}(c_ix_i)^{l_i}\cr
&=&\sum_{k\ge0}\sum_{l_1+\ldots+l_m=k}
{n_1+l_1\choose l_1}\cdots{n_m+l_m\choose l_m}c_1^{l_1}\cdots c_m^{l_m}P_k.
\end{eqnarray*}
It follows then the expected results.
\end{enumerate}
\end{proof}

\begin{corollary}[Kleene stars of the plane]\label{Kleene}
Let $R$ and $L\in\ncs{A^{\mathrm{rat}}}{\calX}$ such that
\begin{eqnarray*}
L^*=R,&\scal{R}{1_{\calX^*}}=1,&\scal{L}{1_{\calX^*}}=0.
\end{eqnarray*}
Then the following assertions are equivalent.
\begin{enumerate}
\item\label{caracter} $R$ is a $\conc$-character of $(\ncp{A}{\calX},\conc,1_{\calX^*})$.
\item\label{exp_rat} $R$ is a Kleene star of the plane.
\item\label{lin_rep} $R$ admits a linear representation of dimension $1$.
\item\label{plane} $L$ is a plane.
\item\label{log} $L$ is an infinitesimal $\conc$-character of $(\ncp{A}{\calX},\conc,1_{X^*})$.
\end{enumerate}
\end{corollary}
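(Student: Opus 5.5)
The plan is to prove the cycle \ref{caracter}$\Leftrightarrow$\ref{exp_rat}$\Leftrightarrow$\ref{lin_rep}, then attach \ref{plane} and \ref{log} through the relation $R=L^*$.

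\textbf{Step 1: \ref{caracter}$\Leftrightarrow$\ref{exp_rat}.} This is exactly Item \ref{conccharacter} of Theorem \ref{exchangeable}, which states that Kleene stars of the plane are precisely the $\conc$-characters.

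\textbf{Step 2: \ref{caracter}$\Leftrightarrow$\ref{lin_rep}.} If $R$ is a $\conc$-character, I take $n=1$, $\nu=\eta=1$ and $\mu(x)=\scal{R}{x}\in A$. Multiplicativity gives $\scal{R}{x_1\cdots x_k}=\mu(x_1)\cdots\mu(x_k)=\nu\mu(w)\eta$. Conversely, suppose $R$ has a representation $(\nu,\mu,\eta)$ of dimension $1$, so that $\mu$ takes values in the commutative ring $A$. Then
\[
\scal{R}{uv}\,\nu\eta=\scal{R}{u}\scal{R}{v}
\quad\text{and}\quad
\nu\eta=\scal{R}{1_{\calX^*}}=1 ,
\]
so $R$ is a $\conc$-character.

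\textbf{Step 3: \ref{exp_rat}$\Leftrightarrow$\ref{plane}.} I first show that $L$ is uniquely determined by $R$. Since $L$ is proper, $R=L^*$ is the unique solution of $\nabla T=LT$. Hence $R-1_{\calX^*}=LR$. Because $\scal{R}{1_{\calX^*}}=1$, $R$ is invertible for concatenation, and therefore $L=1_{\calX^*}-R^{-1}$. Consequently, if $R=L'^*$ for some plane $L'$, then $L=L'$ is a plane. The converse direction holds by definition.

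\textbf{Step 4: \ref{plane}$\Leftrightarrow$\ref{log}.} An infinitesimal $\conc$-character $L$ satisfies $\scal{L}{uv}=\scal{L}{u}\scal{v}{1_{\calX^*}}+\scal{u}{1_{\calX^*}}\scal{L}{v}$. Taking $u,v\in\calX^+$ gives $\scal{L}{uv}=0$, so $L$ vanishes on all words of length $\ge2$. Taking $u=v=1_{\calX^*}$ gives $\scal{L}{1_{\calX^*}}=0$. Thus $L$ is supported on $\calX$, i.e. $L$ is a plane. Conversely, a plane $L=\sum_x\alpha_xx$ satisfies the identity: I check the cases where $u$ or $v$ is empty directly, and if both are nonempty then both sides vanish. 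By Proposition \ref{proposition2}, this is also equivalent to $L$ being primitive for $\Delta_{\conc}$, matching Remark \ref{Ree}.

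\textbf{Main obstacle.} I expect the only delicate point to be the uniqueness of $L$ in Step 3, i.e. that $L^*=R$ forces $L=1-R^{-1}$. This needs the invertibility of series with constant term $1$ in $\ncs{A}{\calX}$, which holds via the geometric series converging for the ultrametric topology \eqref{distance}. Everything else is direct verification on words.
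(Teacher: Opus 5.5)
Your proof is correct and complete. Steps 1 and 4 are the paper's own: Item 3 of Theorem~\ref{exchangeable} for (1)$\Leftrightarrow$(2), and the word-by-word check that an infinitesimal character for the concatenation vanishes at $1_{\calX^*}$ and on words of length at least $2$.

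The route differs in the middle two steps. For the dimension-one representation, the paper only declares (2)$\Leftrightarrow$(3) a direct consequence of Theorem~\ref{KS}. You instead prove (1)$\Leftrightarrow$(3) by hand, using that $\mu$ is then a monoid morphism into the commutative ring $A$ and that $\nu\eta=\scal{R}{1_{\calX^*}}=1$; this is self-contained.

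For (2)$\Leftrightarrow$(4), the paper calls the equivalence obvious by construction and writes $R=\exp_{\shuffle}L$, viewing $L$ as the shuffle-logarithm of $R$. But $L^*=\exp_{\shuffle}L$ rests on $L^{\shuffle k}=k!\,L^k$, which holds for planes and fails for general proper series (e.g.\ $x_0x_1\shuffle x_0x_1=2x_0x_1x_0x_1+4x_0^2x_1^2$). So that argument presupposes that the given $L$ is already the plane. Your identity $L=1_{\calX^*}-R^{-1}$, obtained from $R=1_{\calX^*}+LR$, shows that $L$ is determined by $R$ and so proves (2)$\Rightarrow$(4). It uses only concatenation and the invertibility of series with constant term $1$, with no shuffle and no use of $\Q\subset A$.

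The paper's formulation buys the link with the Ree-type exp/log picture of Remark~\ref{reetheorem}; yours buys a rigorous uniqueness step, which is exactly the point you flagged as the main obstacle.
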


\begin{proof}
{\ref{caracter} $\iff$ \ref{exp_rat}:}
This corresponds to the item \ref{conccharacter} of Theorem \ref{exchangeable}.

{\ref{exp_rat} $\iff$ \ref{lin_rep}}
This is a direct consequence of Theorem \ref{KS}.

{\ref{exp_rat} $\iff$ \ref{plane}:}
This is obvious, by construction (in which $L$ is the $\shuffle$-logarithm of $R$). Indeed, as in Definition \ref{exchangeable} and \eqref{explog}, one has
\begin{eqnarray*}
R=\mathop{\Huge\shuffle}_{x\in\calX}(\alpha_xx)^*=\mathop{\Huge\shuffle}_{x\in\calX}\exp_{\shuffle}(\alpha_xx)=\exp_{\shuffle}L.
\end{eqnarray*}

{\ref{plane} $\iff$ \ref{log}:}
If $L$ is an infinitesimal character then, by Definition \ref{dec0}, one obtains $\scal{L}{uv}=\scal{L}{u}\scal{v}{1_{\calX^*}}+\scal{u}{1_{\calX^*}}\scal{L}{v}$, for $u$ and $v\in\calX^*$.
Hence, for $w=uv\in\calX^{\ge2}$ with $u$ and $v\in\calX^+$, one has $\scal{L}{w}=\scal{L}{uv}=0$.

In addition, $\scal{L}{1_{\calX^*}}=0$ (for $u=v=1_{\calX^*}$) and it follows that
\begin{eqnarray*}
L=\sum_{x\in\calX}\scal{L}{x}x.
\end{eqnarray*}

Conversely, since $\scal{uv}{x}=\scal{u}{x}\scal{v}{1_{\calX^*}}+\scal{u}{1_{\calX^*}}\scal{v}{x}=0$, for $x\in\calX$ and $u,v\in\calX^+$,
then by \eqref{pairing}, one deduces that $\scal{L}{uv}=0$ meaning that $L$ is an infinitesimal $\conc$-character
\begin{eqnarray*}
\scal{L}{uv}=\sum\limits_{x\in\calX}\scal{L}{x}\scal{uv}{x}=0.
\end{eqnarray*}
\end{proof}

\begin{remark}\label{reetheorem}
In Corollary \ref{Kleene}, if $A$ is a field $K$ then point \ref{caracter} (resp. \ref{log}) can be rephrased as ``$R$ is a grouplike series" (resp. ``$L$ is a primitive series") for $\Delta_{\conc}$.

Indeed, in \eqref{D1}--\eqref{D3}, if $S\in\ncs{K}{Y}$ (resp. $\ncs{K}{\calX}$ is a ${\shuffle}_{\phi}$ (resp. $\shuffle,\conc$)-character of  $(\ncp{K}{Y},\conc,1_{Y^*})$
(resp. $(\ncp{K}{\calX},\conc,1_{\calX^*})$ then
\begin{enumerate}
\item Since $S\otimes S=\sum\limits_{u,v\in\calX^*}\scal{S}{u}\scal{S}{v}u\otimes v$ then $\Delta_{{\shuffle}_{\phi}}(S)=S\otimes S$ (resp. $\Delta_{\shuffle}(S)=S\otimes S$ and $\Delta_{\conc}(S)=S\otimes S$).

\item Since $\Delta_{{\shuffle}_{\phi}}$ (resp. $\Delta_{\shuffle}$ and $\Delta_{\conc}$), the maps $T\longmapsto T\otimes1_{Y^*}$ and $T\longmapsto1_{Y^*}\otimes T$
(resp. $T\longmapsto T\otimes1_{\calX^*}$ and $T\longmapsto1_{\calX^*}\otimes T$)
are continuous homomorphisms, then\footnote{Here, $\log S\otimes1_{Y^*}$ and $1_{Y^*}\otimes\log S$
(resp. $\log S\otimes1_{\calX^*}$ and $1_{\calX^*}\otimes\log S$) commute.}
$\Delta_{{\shuffle}_{\phi}}(\log S)=\log S\otimes1_{\calX^*}+1_{\calX^*}\otimes\log S$
(resp. $\Delta_{\shuffle}(\log S)=\log S\otimes1_{\calX^*}+1_{\calX^*}\otimes\log S$
and $\Delta_{\conc}(\log S)=\log S\otimes1_{\calX^*}+1_{\calX^*}\otimes\log S$).
\end{enumerate}
Then $S$ is grouplike series for $\{\Delta_{{\shuffle}_{\phi}},\Delta_{\shuffle},\Delta_{\conc}\}$, if and only if $\log S$ is primitive meaning that the equivalence, between the items \ref{caracter} and \ref{log}, is an extension of the Ree's theorem (see Proposition \ref{proposition2} and Remark \ref{Ree}.).
\end{remark}

To establish Corollary \ref{Subbialgebras} below, we will use a Lie's theorem \cite{serre}, held over $A=K$ as an algebraically closed field of characteristic $0$ \cite{serre}, and the following lemma.

\begin{lemma}\label{lemma}
Let $(\nu,\tau,\eta)$ a representation of $S$ of dimension $r$ such that, for all 
$x\in \calX$, ($\tau(x)-c(x)I_r$) is strictly upper triangular, then 
$S\in\ncs{K_{\mathrm{exc}}^{\mathrm{rat}}}{\calX}\shuffle\ncp{K}{\calX}$.
\end{lemma}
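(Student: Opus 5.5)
We plan to split each generator into its diagonal part and a nilpotent part, and to show that the generating matrix series factors as a shuffle of a diagonal ``exchangeable'' factor with a polynomial factor. Write $\tau(x)=c(x)I_r+N(x)$, where each $N(x)$ is strictly upper triangular. Then the scalar matrices $c(x)I_r$ commute with everything. Consider the matrix-valued series $T:=\sum_{w\in\calX^*}\tau(w)w$, so that $S=\nu T\eta$. The key identity we aim for is
\begin{eqnarray*}
T=\Big(\sum_{x\in\calX}c(x)x\Big)^*\shuffle\sum_{w\in\calX^*}N(w)w,
\end{eqnarray*}
where the shuffle is taken entrywise, with the scalar star multiplying every entry.

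To prove this identity we expand $\tau(x_1\cdots x_k)=\prod_{j}(c(x_j)I_r+N(x_j))$. Since the scalars commute, a term is determined by choosing a subset $J\subset\{1,\ldots,k\}$ of positions where $N$ is taken. That term equals $\prod_{j\notin J}c(x_j)$ times the ordered product of the $N(x_j)$ for $j\in J$. Summing over words and over subsets gives exactly the coefficient of a shuffle of a word $u$ (weighted by $\prod c$) with a word $v$ (weighted by $N(v)$). This uses the combinatorial definition of $\shuffle$ via complementary position subsets, which is equivalent to the recursion in \eqref{shuffle}. Equivalently, one can use $\Delta_{\shuffle}$: the map $w\mapsto \tau(w)$ is a $\conc$-morphism, and $\tau(x)=c(x)\epsilon(x)+N(x)$ splits along $\Delta_{\shuffle}x=x\otimes1+1\otimes x$. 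In that form the identity is the statement that $T$ is the convolution of the character $\big(\sum c(x)x\big)^*$ with the series $\sum N(w)w$.

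Next we use nilpotency. Every product of $r$ strictly upper triangular $r\times r$ matrices vanishes, so $N(w)=0$ whenever $\abs{w}\ge r$. Hence $Q:=\sum_{w}N(w)w$ is a matrix of polynomials in $\ncp{K}{\calX}$ of degree $<r$. The series $R:=\big(\sum_x c(x)x\big)^*$ is a Kleene star of the plane. It admits the $1$-dimensional representation $(1,c,1)$, whose matrices trivially commute, so $R\in\ncs{K_{\mathrm{exc}}^{\mathrm{rat}}}{\calX}$; this follows from Definition \ref{cylindric} and Corollary \ref{Kleene}. Since $\nu,\eta$ are constant vectors, we get $S=\nu(R\shuffle Q)\eta=R\shuffle(\nu Q\eta)$ with $\nu Q\eta\in\ncp{K}{\calX}$. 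This gives $S\in\ncs{K_{\mathrm{exc}}^{\mathrm{rat}}}{\calX}\shuffle\ncp{K}{\calX}$.

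The main obstacle is justifying the factorization identity rigorously, in particular the bookkeeping of position subsets versus the shuffle of words, and the fact that the shuffle with a scalar star commutes with the matrix sandwich by $\nu,\eta$. We would settle it by checking coefficients on a word $w$ directly: $\scal{R\shuffle Q}{w}=\sum_{J}\prod_{j\notin J}c(w_j)\,N(w|_J)$, which matches the expansion of $\tau(w)$. If $\calX$ is infinite, all sums remain locally finite for each $w$, so no convergence issue arises.
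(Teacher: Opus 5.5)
Your proof is correct and follows essentially the paper's route: the same splitting of $S$ as the shuffle of the Kleene star $\big(\sum_{x\in\calX}c(x)x\big)^*$ with the polynomial $\nu Q\eta$, which is a polynomial because products of $r$ strictly upper triangular matrices vanish. The only difference is in how the shuffle identity is justified: the paper takes it from the Kronecker-sum representation of a shuffle product (Proposition \ref{linearrepresentation}), noting that $(\tau(x)-c(x)I_r)\otimes I_r+I_r\otimes c(x)I_r=\tau(x)\otimes I_r$, whereas you verify it directly, coefficient by coefficient, via position subsets.
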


\begin{proof}
Let $(e_i)_{1\le i\le r}$ be the canonical basis of $M_{1,r}(K)$.
We construct the representations of $S_1$ and $S_2$
\begin{eqnarray*}
\rho_1=(\nu,(x\longmapsto\tau(x)-c(x)I_r),\eta)&\mbox{and}& 
\rho_2=(e_1,(x\longmapsto c(x)I_r),{}^te_1)
\end{eqnarray*}
and remark that $S_1\shuffle S_2$ admits the representation 
\begin{eqnarray*}
\rho_3=(\nu\otimes e_1,((\tau(x)-c(x)I_r)\otimes I_r+I_r\otimes c(x)I_r)_{x\in\calX},\eta\otimes{}^te_1)
\end{eqnarray*}
as $I_r\otimes c(x)I_r=c(x)I_r\otimes I_r$, $\rho_3$ is, in fact, $(\nu\otimes e_1,(\tau(x)\otimes I_r)_{x\in \calX},\eta\otimes{}^te_1)$
which represents $S$, the result now comes from the fact that $S_1\in\ncp{K}{\calX}$
and $S_2=\Big(\sum\limits_{x\in \calX}c(x)x\Big)^*\in\ncs{K_{\mathrm{exc}}^{\mathrm{rat}}}{\calX}$.
\end{proof}

\begin{corollary}[Triangular sub bialgebras of $(\ncs{K^{\mathrm{rat}}}{\calX},\shuffle,1_{X^*},\Delta_{\conc})$]\label{Subbialgebras}
Let $(\nu,\mu,\eta)$ be a linear representation of $R\in\ncs{K^{\mathrm{rat}}}{\calX}$. Then, with notations in Definition \ref{nil_sol},
\begin{enumerate}
\item\label{CommRep} $\calL(\mu)$ is commutative if and only if $R\in\ncs{K_{\mathrm{exc}}^{\mathrm{rat}}}{\calX}$,

\item\label{NilRep} $\calL(\mu)$ is nilpotent if and only if 
$R\in{\ncs{K_{\mathrm{exc}}^{\mathrm{rat}}}{\calX}}\shuffle\ncp{K}{\calX}$,

\item\label{SollRep} $\calL(\mu)$ is solvable if and only if $R$ is a linear combination of series in the form $E_1x_{i_1}\ldots E_jx_{i_j}E_{j+1}$, where $E_1,\ldots,E_{j+1}\in\serie{K_{\mathrm{exc}}^{\mathrm{rat}}}{\calX}$ and $x_{i_1},\cdots,x_{i_j}\in\calX$.
\end{enumerate}
\end{corollary}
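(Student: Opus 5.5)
The plan is to treat the three items in parallel. In each case we reduce to a normal form of the matrices $\{\mu(x)\}_{x\in\calX}$ obtained by a change of basis $\mu\mapsto P\mu P^{-1}$, $\nu\mapsto\nu P^{-1}$, $\eta\mapsto P\eta$. Such a change preserves the series $R$ and the isomorphism class of $\calL(\mu)$. The normal form then gives the structure of $R$ via Lemma \ref{lemma}, Proposition \ref{linearrepresentation} and Theorem \ref{exchangeable}. Throughout, $K$ is taken algebraically closed of characteristic $0$, as announced before Lemma \ref{lemma}. Since $R$ is a linear combination of coefficients of $M(\calX^*)$, it suffices to describe $\nu M(\calX^*)\eta$ (Corollary \ref{factorized}).

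\emph{Item \ref{CommRep}.} If $\calL(\mu)$ is commutative, then the generators $\mu(x)$ commute. Hence $R$ is rationally exchangeable by Definition \ref{cylindric}. Conversely, if $R\in\ncs{K_{\mathrm{exc}}^{\mathrm{rat}}}{\calX}$, Theorem \ref{exchangeable}(\ref{ratex1}) shows that a minimal representation has commuting matrices. I would state the equivalence for this minimal representation, or read ``$\calL(\mu)$'' as referring to some representation of $R$. This caveat is needed because an arbitrary non-minimal representation can carry noncommuting junk.

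\emph{Item \ref{NilRep}.} Assume $\calL(\mu)$ is nilpotent. Since the $\mu(x)$ lie in a nilpotent Lie algebra of matrices, $K^n$ decomposes into generalized weight spaces $V_\lambda$, each invariant under all $\mu(x)$. On $V_\lambda$, every $\mu(x)$ acts as $\lambda(x)I+$ nilpotent, and by Engel/Lie's theorem the nilpotent parts can be made simultaneously strictly upper triangular. Then $R=\sum_\lambda R_\lambda$, where $R_\lambda$ is represented by the block triple, and Lemma \ref{lemma} gives $R_\lambda\in\ncs{K_{\mathrm{exc}}^{\mathrm{rat}}}{\calX}\shuffle\ncp{K}{\calX}$. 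Conversely, $(\sum c(x)x)^*\shuffle P$ has, by Proposition \ref{linearrepresentation}, the representation $\mu(x)=c(x)I\otimes I+I\otimes\mu_P(x)$, where $\mu_P(x)$ is nilpotent strictly upper triangular for a polynomial $P$. This Lie algebra is scalar plus nilpotent, hence nilpotent. Direct sums then handle linear combinations. Exchangeable factors with several commuting eigenvalues reduce to stars of planes via Theorem \ref{exchangeable}(\ref{Kronecker}).

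\emph{Item \ref{SollRep}.} By Lie's theorem, solvability of $\calL(\mu)$ is equivalent to the existence of a basis in which all $\mu(x)$ are upper triangular. In that basis, write $M(\calX)=D(\calX)+N(\calX)$ and apply Corollary \ref{factorized}(2). This gives $M(\calX^*)=\sum_{j\le k}(D(\calX^*)N(\calX))^jD(\calX^*)$. The diagonal $D(\calX^*)$ has entries $(\sum_x d_{ii}(x)x)^*$, which are exchangeable. Each entry of $(D(\calX^*)N(\calX))^jD(\calX^*)$ is therefore a sum of products $E_1 x_{i_1}E_2\cdots x_{i_j}E_{j+1}$. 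Conversely, a product $E_1x_{i_1}\cdots E_{j+1}$ with each $E_i$ given by a commuting (hence, over algebraically closed $K$, simultaneously triangularizable) representation has, by the concatenation formula of Proposition \ref{linearrepresentation}, a block upper triangular representation with triangular diagonal blocks. The generated Lie algebra is thus solvable, and sums preserve this by direct sum.

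The main obstacle is the converse directions. These require $\calL(\mu)$ to be taken for a suitable (e.g.\ constructed or minimal) representation rather than an arbitrary one, so I would make that interpretation explicit. A secondary difficulty in Item \ref{NilRep} is checking that shuffling several exchangeable factors still yields a nilpotent Lie algebra.
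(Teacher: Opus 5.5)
Your proposal follows the paper's proof item by item, and your caveat that the converses cannot hold for an arbitrary non-minimal representation is one the paper's own proof also implicitly relies on. The item-by-item match is: the shift argument on a minimal representation for the commutative case; a decomposition into single-eigenvalue blocks followed by Lemma~\ref{lemma} for the nilpotent case; Lie's theorem with Corollary~\ref{factorized} for the solvable case; and the constructions of Proposition~\ref{linearrepresentation} for the converses.

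The paper goes slightly further in two places. First, it proves the converses for \emph{every minimal} $\mu$, not just for some constructed representation. A minimal representation is a subquotient of the constructed one, so its Lie algebra is a homomorphic image of $\calL(\tau)$ and inherits nilpotency or solvability; since minimal representations are conjugate, this covers every minimal $\mu$. Second, it disposes of your ``secondary difficulty'' in the nilpotent case without the Kronecker detour, which only describes exchangeable series over finite alphabets. It uses the identity $[\tau_1(x)\otimes I+I\otimes\tau_2(x),\tau_1(y)\otimes I+I\otimes\tau_2(y)]=[\tau_1(x),\tau_1(y)]\otimes I+I\otimes[\tau_2(x),\tau_2(y)]$, applied to an abelian representation of the exchangeable factor and a nilpotent one of the polynomial.
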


\begin{proof}
\begin{enumerate}
\item Since $\calL(\mu)$ is commutative then, due to the commutation of matrices, \textit{i.e.} $\scal{R}{pxys}=\scal{R}{pyxs}$, for $x,y\in\calX$ and $p,s\in \calX^*$. Conversely, since $\rho$ is minimal then there is $P_i$ and $Q_i\in\ncp{K}{\calX}$ ($i=1...n$) such that, for any $u\in\calX^*$, (see \cite{berstel,DR})
\begin{eqnarray*}
\mu(u)=(\scal{P_i\trr R\trl Q_i}{u})_{1\leq i,j\leq n}=(\scal{R}{Q_iuP_i})_{1\leq i,j\leq n}.
\end{eqnarray*}
Now, for any $x,y\in\calX$,
\begin{eqnarray*}
\mu(xy)=(\scal{R}{Q_ixyP_i})_{1\leq i,j\leq n}\stackrel{(*)}{=}(\scal{R}{Q_iyxP_i})_{1\leq i,j\leq n}=\mu(yx),
\end{eqnarray*}
(equality $\stackrel{(*)}{=}$ being due to exchangeability).

\item Since $\calL(\mu)$ is nilpotent then let $K^n$ be the space of the representation of $\calL(\mu)$ and $\bigoplus\limits_{j=1}^m V_j$ be a decomposition of $K^n$ into indecomposable $\calL(\mu)$-modules (see \cite{dixmier} for characteristic $0$, or \cite{Lie7} for arbitrary characteristic), we know that $V_j$ is an $\calL(\mu)$-module and the action of $\calL(\mu)$ is triangularisable with constant diagonals inside each sector $V_j$. Thus, it is an invertible matrix $P$ in $\mathrm{GL}(n,K)$ such that, for any $x\in \calX$,
\begin{eqnarray*}
P\mu(x)P^{-1}=\mathrm{blockdiag}(T_1,T_2\ldots,T_k)=
\begin{pmatrix}
T_1&0&\ldots&0\cr
0&T_2&\ldots&0\cr
\vdots&\vdots&\ddots&\vdots\cr
0&0&\ldots&T_k
\end{pmatrix},
\end{eqnarray*}
where the $T_j$'s are upper triangular with the constant coefficients in the diagonal. Let $d_j$ be the dimension of $T_j$ (so that $n=\sum\limits_{j=1}^md_j$) and partitioning  $\nu'=\nu P^{-1}$ (resp. $\eta'=P\eta$) with these dimensions, we get blocks so that each $(\nu'_j,T_j,\eta'_j)$ is the linear representation of the series $R_j$ and $R=\sum\limits_{j=1}^mR_j$. By Lemma \ref{lemma}, each $R_j$ belongs to $\ncs{K_{\mathrm{exc}}^{\mathrm{rat}}}{\calX}\shuffle\ncp{K}{\calX}$ and so is their sum $R$.

Conversely, if $\rho_i=(\nu_i,\tau_i,\eta_i),i=1,2$, are two representations then
\begin{eqnarray*}
&&[\tau_1(x)\otimes I_r+I_r\otimes\tau_2(x),\tau_1(y)\otimes I_r+I_r\otimes\tau_2(y)]\cr
&&=[\tau_1(x)\otimes I_r,\tau_1(y)\otimes I_r]
+[\tau_1(x)\otimes I_r,I_r\otimes\tau_2(y)]\cr
&&+[I_r\otimes\tau_2(x),\tau_1(y)\otimes I_r]
+[I_r\otimes\tau_2(x),I_r\otimes\tau_2(y)]\cr
&&=[\tau_1(x),\tau_1(y)]\otimes I_r+I_r\otimes [\tau_2(x),\tau_2(y)]
\end{eqnarray*}
because
\begin{eqnarray*}
[\tau_1(x)\otimes I_r,\tau_1(y)\otimes I_r]
&=&\tau_1(xy)\otimes I_r-\tau_1(yx)\otimes I_r\cr
&=&[\tau_1(x),\tau_1(y)]\otimes I_r,\cr
[\tau_1(x)\otimes I_r,I_r\otimes\tau_2(y)]
&=&\tau_1(x)\otimes\tau_2(y)-\tau_2(y)\otimes\tau_1(x)\cr
&=&0,\cr
[I_r\otimes\tau_2(x),\tau_1(y)\otimes I_r]
&=&\tau_1(y)\otimes\tau_2(x)-\tau_1(y)\otimes\tau_2(x)\cr
&=&0,\cr
[I_r\otimes\tau_2(x),I_r\otimes\tau_2(y)]
&=&I_r\otimes\tau_2(xy)-I_r\otimes\tau_2(yx)\cr
&=&I_r\otimes[\tau_2(x),\tau_2(y)].
\end{eqnarray*}
Similar formula hold for $m$-fold brackets (Dynkin combs), so that if $\calL(\tau_i)$'s are nilpotent, the Lie algebra $\calL(\tau_1\otimes I_r+I_r\otimes\tau_2)$ is also nilpotent.
The point here comes from the fact that series in $\ncs{K_{\mathrm{exc}}^{\mathrm{rat}}}{\calX}$ as well as in $\ncp{K}{\calX}$ admit nilpotent representations, so, let $(\alpha,\tau,\beta)$ such a representation and $(\alpha',\tau',\beta')$ its minimal quotient (obtained by minimization, see \cite{berstel}), then $\calL(\tau')$ is nilpotent as a quotient of $\calL(\tau)$. Now two minimal representations being isomorphic, $\calL(\mu)$ is isomorphic to $\calL(\tau)$ and then it is nilpotent.

\item Since $\calL(\mu)$ is solvable then, by a Lie's theorem \cite{serre}, $\{\mu(x)\}_{x\in X}$ are simultaneously upper triangularisable.
In the favorable change of bases, without loss generality, one suppose that the linear representation $(\nu,\mu,\eta)$ of $R$ is such that
each $\mu(x)$ is upper triangular and it follows that $M(x)$ is also upper triangular and $D(\calX^*)N(\calX)$ is nilpotent of order $k$.
Let ${\calS}$ be the vector space generated by the expected form of series. Then $\calS$ is closed by concatenation and then $D(\calX^*)N(\calX)$
and $(D(\calX^*)N(\calX))^*$ belong to ${\calS}^{n\times n}$. Finally, by Corollary~\ref{factorized}, $R=\nu M(\calX^*)\eta\in{\calS}$.

Conversely, as sums and quotients of solvable representations  are solvable, it suffices to show that a single expected form admits a solvable representation and end by quotient and isomorphism as in (\ref{NilRep}). By Proposition~\ref{linearrepresentation}, if $R_i$ admits solvable representations so does $R_1R_2$, then the claim follows from the fact that, firstly, single letters admit solvable (even nilpotent) representations and secondly series of $\mathop{\shuffle}\limits_{x\in\calX}\{\ncs{K^{\mathrm{rat}}}{x}\}$ admit solvable representations. Finally, we choose (or construct) a solvable representation of $R$, call it $(\alpha,\tau,\beta)$ and  $(\alpha',\tau' \beta')$ its minimal quotient, then $\calL(\tau')$ is solvable as a quotient of $\calL(\tau)$. Now two minimal representations being isomorphic, $\calL(\mu)$ is isomorphic to $\calL(\tau)$, hence solvable.
\end{enumerate}
\end{proof}

\begin{remark}
\begin{enumerate}
\item For an example of representative series with solvable representation but such that $S\notin\ncs{K_{\mathrm{exc}}^{\mathrm{rat}}}{\calX}\shuffle\ncp{K}{\calX}$, one takes $S=x_0^*x_1(-x_0)^*$ (for $\calX=\{x_0,x_1\}$) admitting the following linear representation.
\begin{eqnarray*}
\nu=\begin{pmatrix}0&1\end{pmatrix},
&\mu(x_0)=
\begin{pmatrix}
-1&0\cr
0&1
\end{pmatrix}
\mbox{ and }
\mu(x_1)=
\begin{pmatrix}
0&1\cr
0&0
\end{pmatrix},
&\eta=\begin{pmatrix}1\cr0\end{pmatrix}.
\end{eqnarray*}

\item Denoting by $\ncs{K_{\mathrm{nil}}^{\mathrm{rat}}}{\calX}$ (resp. $\ncs{K_{\mathrm{sol}}^{\mathrm{rat}}}{\calX}$),
the set of representative series such that $\calL(\mu)$ is nilpotent (resp. solvable), we get a tower of sub Hopf algebras of the Sweedler's dual,
$\ncs{K_{\mathrm{nil}}^{\mathrm{rat}}}{\calX}\subset\ncs{K_{\mathrm{sol}}^{\mathrm{rat}}}{\calX}\subset\calH_{\shuffle}^{\circ}(\calX)$.
\end{enumerate}
\end{remark}

\section{Conclusion}
In this work, various products such as concatenation, shuffle and its $\phi$-deforma\-tions (\textit{i.e.} $\conc,\shuffle$ and ${\shuffle}_{\phi}$), and their coproducts $\{\Delta_{\conc},\Delta_{\shuffle},\Delta_{{\shuffle}_{\phi}}\}$, of noncommutative series, over a finite alphabet (as $X=\{x_0,\ldots,x_m\}$) or an infinite alphabet (as $Y=\{y_k\}_{k\ge1}$) and with coefficients in a ring $A$ containing $\Q$,  were examined.
Basing on various (effective) constructions of pair of dual bases in the graded bialgebras $(\ncp{A}{X},1_{X^*},\conc,\Delta_{\shuffle})$ or $(\ncp{A}{Y},1_{Y^*},\conc,\Delta_{\shuffle_{\phi}})$, the rational series viewed as representative functions on the free monoids $(X^*,1_{X^*})$ or $(Y^*,1_{Y^*})$, were factorized and decomposed within their associated bialgebras $(\ncs{A^{\mathrm{rat}}}{X},1_{X^*},\shuffle,\Delta_{\tt conc})$ or $(\ncs{A^{\mathrm{rat}}}{Y},{Y^*},{\shuffle}_{\phi},\Delta_{\tt conc})$.
In particular, for $A$ being a field $K$, these $K$-bialgebras of representative series are also proved to be isomorphic to the Sweedler's duals of the $K$-bialgebras of polynomials. 
Moreover, for the concatenation, only Kleene stars of the planes are characters, or equivalently (by use of a Ree's theorem like), only the planes are infinitesimal characters.

\section*{Appendix}\label{polylogarithms}
Representative series constitute, as already said in Section \ref{intro}, an algebraic combinatorics framework for calculations in computer algebra dealing with polylogarithms and harmonic sums indexed by words as well as by rational series, for which a synthesis of main results is being presented to serve readers and future works.

Let us consider again notations in Examples \ref{pos}--\ref{Hypergeometric} (and Remark \ref{multiindex}) and use the morphisms of algebras $\pi_Y:(A1_{X^*}\oplus\AX x_1,{\conc},1_{Y^*})\longrightarrow(\AY,{\conc},1_{Y^*})$, mapping $x_0^{s_1-1}x_1\ldots x_0^{s_r-1}x_1$ to $y_{s_1}\ldots y_{s_r}$.
Let $\calH(\Omega)$ be the ring of holomorphic functions on the simply connected domain $\Omega:=\widetilde{\C\setminus\{0,1\}}$, admitting $1_{\Omega}$ as the neutral element, and $\calC:=\C[z,z^{-1},(1-z)^{-1}]$ be the subring of $\calH(\Omega)$. Let $\calG$ denote the group of transformations, generated by $\{z\mapsto1-z,z\mapsto1/z\}$, permuting the singularities in $\{0,1,+\infty\}$. Then, for any $G\in\calC$ and $g\in\calG$, one has $G(g(z))\in\calC$.

\begin{theorem}[structure theorem, \cite{Ngo,FPSAC98,CM}]\label{structure}
One has
\begin{enumerate}
\item The following morphism are {\it injective} (in particular $\Li_{x_0^r}(z)={\log^r(z)}/{r!}$)
\begin{eqnarray*}
\H_{\bullet}:(\Q\pol{Y},\stuffle,1_{Y^*})\longrightarrow(\Q\{\H_w\}_{w\in Y^*},\times,1),&&w\longmapsto\H_w,\label{H}\cr
\Li_{\bullet}:(\QX,\shuffle,1_{X^*})\longrightarrow(\Q\{\Li_w\}_{w\in X^*},\times,1_{\Omega}),&&w\longmapsto\Li_w,\label{Li}
\end{eqnarray*}
Hence, $\{\H_w\}_{w\in Y^*}$ and $\{\Li_w\}_{w\in X^*}$ (resp. $\{\H_l\}_{l\in\Lyn Y}$ and $\{\Li_l\}_{l\in\Lyn X}$) are $\Q$-linearly (resp.algebraically) independent.

\item $\calC\{\Li_w\}_{w\in X^*}\cong\calC\otimes\C\{\Li_w\}_{w\in X^*}$, closed under the action of $\mathcal G$ and closed by $\theta_0=z\partial_z$ and $\theta_1=(1-z)\partial_z$ and their sections such that $\theta_0\iota_0=\theta_1\iota_1=\mathrm{Id}$.

\item There is also a law $\top$, in $\serie{\Q}{Y_0}$, such that the following morphisms are surjective (in particular, $\Li^-_{y_0^r}(z)=\big(z/(1-z)\big)^r, \H^-_{y_0^r}(n)=\binom{n}{r}=(n)_r/r!$).
\begin{eqnarray*}
\H^-_{\bullet}:(\Q\pol{Y_0}, \stuffle,1_{Y_0^*})\longrightarrow(\Q\{\H^-_w\}_{w\in Y_0^*},\times,1),&&w\longmapsto\H^-_w,\cr
\Li^-_{\bullet}:(\Q\pol{Y_0},\top,1_{Y_0^*})\longrightarrow(\Q\{\Li^-_w\}_{w\in Y_0^*},\times,1_{\Omega}),&&w\longmapsto\Li^-_w,
\end{eqnarray*}
Hence, $\{\H^-_{y_k}\}_{k\ge0}$ and $\{\Li^-_{y_k}\}_{k\ge0}$ are $\Q$-linearly free and $\ker\H^-_{\bullet}=\ker\Li^-_{\bullet}=\Q\langle\{w-w\top 1_{Y_0^*}\mid w\in Y_0^*\}\rangle$. Moreover, if $\top':{\Q}\pol{Y_0}\times{\Q}\pol{Y_0}\rightarrow{\Q}\pol{Y_0}$ is a law such that $\Li^-_{\bullet}$ is a morphism for $\top'$ and $(1_{Y_0^*}\top'{\Q}\pol{Y_0})\cap\ker(\Li^-_{\bullet})=\{0\}$ then $\top'=f\circ\top$, where $f\in GL({\Q}\pol{Y_0})$ is such that $\Li^-_{\bullet}\circ f=\Li^-_{\bullet}$.
\end{enumerate}
\end{theorem}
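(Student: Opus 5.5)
The proof splits along the three items of Theorem~\ref{structure}. Each item is a statement about a morphism from a combinatorial algebra of words to an algebra of functions, so in each case I would (a) check the morphism property on words, (b) prove injectivity or surjectivity, and (c) read off the independence consequences.

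\emph{Item 1: morphism property.} For $\H_\bullet$, I would show $\H_u\H_v=\H_{u\stuffle v}$ by induction on $\abs{u}+\abs{v}$. Write $u=u'y_s$ and $v=v'y_t$, and split the double sum over the largest indices $n_r,m_q\le n$ into the three cases $n_r<m_q$, $n_r>m_q$ and $n_r=m_q$. The case $n_r=m_q$ produces $y_{s+t}$, which matches the recursion of Example~\ref{q} with $q=1$ (read on the right end, which is equivalent by symmetry). For $\Li_\bullet$, I would use the Chen iterated-integral representation \eqref{notation} with $\omega_0=dz/z$ and $\omega_1=dz/(1-z)$, where the path starts at $0$ with the regularization $\Li_{x_0^r}=\log^r(z)/r!$. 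The shuffle relation then follows from the usual Fubini argument, or equivalently from the fact that the Chen series $C_{0\path z}$ is $\Delta_{\shuffle}$-grouplike, so Proposition~\ref{proposition2} applies.

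\emph{Item 1: injectivity.} This is the main obstacle. My plan is to prove $\C$-linear independence of $\{\Li_w\}_{w\in X^*}$ over $\calC$, which is stronger than what is needed and also gives item 2. Suppose $\sum_w P_w\Li_w=0$ with $P_w\in\calC$. Apply $\theta_0$ and $\theta_1$; these act by the left shifts $x_0\rd$ and $x_1\rd$ on words, since $\theta_0\Li_{x_0w}=\Li_w$ and $\theta_1\Li_{x_1w}=\Li_w$. Choose a relation of minimal degree and compare leading terms. Then use the asymptotics at $0$: the $\Li_w$ with $w\notin x_0X^*$ vanish at $0$, while the $\log$-powers are independent over $\calC$ by monodromy around $0$. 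Together these force all $P_w=0$.

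Once $\Li_\bullet$ is injective, it is an isomorphism of the shuffle algebra onto its image. By Radford's theorem ($\Lyn X$ is a transcendence basis of $(\QX,\shuffle)$), $\{\Li_l\}_{l\in\Lyn X}$ are algebraically independent. For $\H_\bullet$, I would not repeat the analysis. Instead, use the generating series identity $\sum_n\H_{\pi_Y(w)}(n)z^n=\Li_w(z)/(1-z)$ from Example~\ref{pos} for $w\in X^*x_1$: a $\Q$-linear relation among the $\H$ transfers to one among the $\Li_w$, which we have just excluded. The Lyndon statement for $\Lyn Y$ then follows from $\Lyn Y$ being a transcendence basis of $(\ncp{\Q}{Y},\stuffle)$, as in \cite{siblings}.

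\emph{Item 2.} This follows directly from the $\calC$-independence above. Closure under $\theta_i$ is the shift formula. The sections $\iota_0,\iota_1$ are integration from the base point, which sends $\Li_w$ to $\Li_{x_iw}$ up to elements of $\calC\{\Li\}$. Closure under $\calG$ comes from the functional equations for $z\mapsto1-z$ and $z\mapsto1/z$. These I would obtain by solving the differential system \eqref{linear} satisfied by $C_{0\path z}$ after the change of variable: the transformed solution equals the original one times a constant grouplike series (the Drinfel'd associator type factor, with coefficients $\zeta$ and $\mathrm{i}\pi$). So each $\Li_w(g(z))$ lies in $\calC\{\Li_w\}$, and its coefficients lie in the polyzeta algebra, hence are complex constants.

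\emph{Item 3.} I would define $\top$ by transporting the product. The $\Li^-_{y_k}$ are rational functions in $z/(1-z)$ computed via the operator $\theta_0$ acting on $1/(1-z)$. Since products of such functions stay in the $\Q$-span of the $\Li^-_w$, and each target function has the form $\Li^-_w$ modulo the kernel, I would fix $\top$ on words via a chosen section. Surjectivity is then by construction. The stuffle statement for $\H^-$ follows as in item 1 via the generating series. The kernel description and the uniqueness of $\top'$ up to $f$ are linear algebra: two laws lifting the same product differ by an automorphism fixing the fibres of $\Li^-_\bullet$.
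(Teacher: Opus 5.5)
The paper gives no proof of this theorem; it is quoted from \cite{Ngo,FPSAC98,CM}, so I measure your plan against the standard argument in those references. For items 1--2 your architecture is the standard one: morphism properties; $\calC$-linear independence of $\{\Li_w\}_{w\in X^*}$, giving injectivity of $\Li_\bullet$ and the isomorphism in item 2; transfer to $\H_\bullet$ through $\sum_{n\ge0}\H_{\pi_Y(w)}(n)z^n=\Li_w(z)/(1-z)$; and Radford's theorem for the Lyndon statements. The gap is in the step that carries all the weight, the $\calC$-independence.

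First, $\theta_0,\theta_1$ do not act as shifts on the family: $\theta_0\Li_{x_1u}=\frac{z}{1-z}\Li_u$ and $\theta_1\Li_{x_0u}=\frac{1-z}{z}\Li_u$. The usable rule is $\partial_z\Li_{x_0u}=z^{-1}\Li_u$ and $\partial_z\Li_{x_1u}=(1-z)^{-1}\Li_u$. With it, take a relation $\sum_wP_w\Li_w=0$ over $\C(z)\supset\calC$, minimal for the pair (maximal length $n$, number of terms of length $n$), and normalized so that one top coefficient is $1$. Its derivative is either trivial or a smaller relation, hence trivial. So the top coefficients are constants $c_w$, and $\partial_zP_u=-c_{x_0u}z^{-1}-c_{x_1u}(1-z)^{-1}$ for every $u$ of length $n-1$. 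What closes the argument is that the derivative of a rational function has zero residue at $0$ and at $1$; this forces all $c_w=0$, a contradiction. This is the criterion of \cite{Linz}.

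Your substitute, asymptotics and monodromy at $0$, cannot supply this. It only detects the $\log z$-part, and since $\L(z)=h(z)e^{x_0\log z}$ with $h$ holomorphic at $0$ and $h(0)=1$, that part is governed by the trailing $x_0$'s of $w$, not by whether $w\in x_0X^*$. Meanwhile every $\Li_w$ with $w\in X^*x_1$ is holomorphic at $0$. For instance, nothing in your sketch excludes $\Li_{x_1}=-\log(1-z)\in\calC$, which is ruled out only by the residue of $(1-z)^{-1}$ at $z=1$. Items 1 and 2, and the transfer to $\H_\bullet$, all rest on this missing step.

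In item 3, defining $\top$ through a linear section $s$ of $\Li^-_\bullet$, i.e.\ $u\top v=s(\Li^-_u\Li^-_v)$, does make surjectivity and the kernel formula automatic. You should also record two facts you skipped: $\ker\H^-_\bullet=\ker\Li^-_\bullet$, which follows from $\sum_{n\ge0}\H^-_w(n)z^n=\Li^-_w(z)/(1-z)$; and the $\Q$-freeness of $\{\Li^-_{y_k}\}_{k\ge0}$, which follows from their distinct pole orders $k+1$ at $z=1$.

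The final clause, however, is not ``linear algebra'': two laws lifting the same product need not differ by an automorphism. With your $\top$, put $\kappa=y_1-y_0-y_0^2$; it lies in $\ker\Li^-_\bullet$ because $\frac{z}{(1-z)^2}-\frac{z}{1-z}=\frac{z^2}{(1-z)^2}$. Let $\top'=\top+k$, with $k$ bilinear, $\ker\Li^-_\bullet$-valued, $k(1_{Y_0^*},\cdot)=0$ and $k(y_0,y_0)=\kappa$. Then $\Li^-_\bullet$ is a morphism for $\top'$, and $1_{Y_0^*}\top'\Q\pol{Y_0}=\mathrm{Im}\,s$ meets the kernel trivially. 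Yet $y_0\top y_0=1_{Y_0^*}\top y_0^2$ while $y_0\top'y_0-1_{Y_0^*}\top'y_0^2=\kappa\neq0$, so no $f$ with $\top'=f\circ\top$ exists. For the law you construct, this clause therefore fails. It cannot come from a generic section argument and would need the specific $\top$ of \cite{Ngo} or further hypotheses on $\top'$.
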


Using Theorem \ref{structure}, the following polymorphism is {\it surjective} (by definition)
\begin{eqnarray}\label{zeta}
\zeta:{\displaystyle(\Q1_{X^*}\oplus x_0\QX x_1,\shuffle,1_{X^*})\atop
\displaystyle(\Q1_{Y^*}\oplus(Y\setminus\{y_1\})\Q\pol{Y},\stuffle,1_{Y^*})}&\longtwoheadrightarrow&(\calZ,\times,1),\\
{\displaystyle x_0x_1^{s_1-1}\ldots x_0x_1^{s_k-1}\atop\displaystyle y_{s_1}\ldots y_{s_k}}&\longmapsto&\zeta(s_1,\ldots,s_k),
\end{eqnarray}
where $\calZ$ is the $\Q$-algebra generated by $\{\zeta(l)\}_{l\in\Lyn X\setminus X}$ (resp. $\{\zeta(S_l)\}_{l\in\Lyn X\setminus X}$),
or equivalently, generated by $\{\zeta(l)\}_{l\in\Lyn Y\setminus\{y_1\}}$ (resp. $\{\zeta(\Sigma_l)\}_{l\in\Lyn Y\setminus\{y_1\}}$).
The grouplike series $\H:=(\H_{\bullet}\otimes\mathrm{Id}_Y){\mathcal D}_Y$ and $\L:=(\Li_{\bullet}\otimes\mathrm{Id}_X){\mathcal D}_X$ are defined, as images of ${\mathcal D}_Y$ and ${\mathcal D}_X$ in \eqref{diagonal}, and allow defining the following grouplike series \cite{FPSAC98,VJM}
\begin{eqnarray}
\H=\prod_{l\in\Lyn Y}^{\searrow}e^{\H_{\Sigma_l}\Pi_l}
&\mbox{and set}&
Z_{\stuffle}:=\prod_{l\in\Lyn Y\setminus\{y_1\}}^{\searrow}e^{\zeta(\Sigma_l)\Pi_l},\\
\L=\prod_{l\in\Lyn X}^{\searrow}e^{\Li_{S_l}P_l}
&\mbox{and set}&
Z_{\shuffle}:=\prod_{l\in\Lyn X\setminus X}^{\searrow}e^{\zeta(S_l)P_l}.
\end{eqnarray}

Singularities analysing on coefficients of $\L$ \cite{FPSAC98} and, using an identity of the type Newton-Girard \cite{Daresbury,JSC}, one obtains successively
\begin{eqnarray}
\L(z)\sim_0\exp(x_0\log z)&\mbox{and}&\L(z)\sim_1\exp(-x_1\log(1-z))Z_{\shuffle},\\\label{globalasymptotic}
\H(n)\sim_{+\infty}\sum_{k\ge0}\H_{y_1^k}y_1^k\pi_YZ_{\shuffle}
&\mbox{and}&
\sum_{k\ge0}\H_{y_1^k}y_1^k=e^{\sum_{k\ge1}\H_{y_k}(n){(-y_1)^k}/k}.\label{asymptotic}
\end{eqnarray}

\begin{theorem}[\cite{Daresbury,JSC,VJM}]\label{pont}
Let us consider the following morphism of algebras
\begin{eqnarray*}
\gamma_{\bullet}:(\Q\pol{Y},\stuffle,1_{Y^*})&\longrightarrow&(\calZ[\gamma],\times,1),\\
w&\longmapsto&\gamma_w=\mathrm{f.p.}_{n\rightarrow+\infty}\H_w(n),\quad\{n^a\log^b(n)\}_{a\in\Z,b\in\N},
\end{eqnarray*}
and let $Z_{\gamma}$ be the noncommutative generating series of $\{\gamma_w\}_{w\in Y^*}$. Then one has
\begin{enumerate}
\item $\gamma$ is a $\stuffle$-character.
Hence, $Z_{\gamma}$ is group-like for $\Delta_{\stuffle}$ and $Z_{\gamma}=e^{\gamma y_1}Z_{\stuffle}$.

\item Setting $B(y_1):=e^{\gamma y_1-\sum_{k\ge2}{\zeta(k)}(-y_1)^k/{k}}$ and $B'(y_1):=e^{-\sum_{k\ge2}{\zeta(k)}(-y_1)^k/{k}}$,
one has the following global renormalization
\begin{eqnarray*}
\lim_{z\rightarrow 1}e^{y_1\log(1-z)}\pi_Y(\L(z))=\lim_{n\rightarrow\infty}e^{\sum_{k\ge1}\H_{y_k}(n){(-y_1)^k}/k}\H(n)=\pi_Y{Z}_{\shuffle}
\end{eqnarray*}
and then $Z_{\gamma}=B(y_1)\pi_Y{Z}_{\shuffle}$, or equivalently, $Z_{\stuffle}=B'(y_1)\pi_Y{Z}_{\shuffle}$.
\end{enumerate}
\end{theorem}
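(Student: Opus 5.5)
The statement to prove is Theorem \ref{pont}. I would start from the structure theorem (Theorem \ref{structure}) and the asymptotic expansions \eqref{globalasymptotic}--\eqref{asymptotic}.

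\emph{Character property.} Each $\H_w(n)$ has an asymptotic expansion in the scale $\{n^a\log^b(n)\}_{a\in\Z,b\in\N}$. This follows from the factorization $\H=\prod^{\searrow}e^{\H_{\Sigma_l}\Pi_l}$ and from $\H(n)\sim\sum_k\H_{y_1^k}y_1^k\,\pi_YZ_{\shuffle}$, where the $\H_{y_k}(n)$ are polynomial in $\log n$ and $\gamma$ up to $O(n^{-1})$. The finite part $\mathrm{f.p.}$ is the constant term in this scale. Since products of such expansions stay in the scale, $\mathrm{f.p.}$ is multiplicative on the algebra of functions generated. Because $\H_\bullet$ is a $\stuffle$-morphism (Theorem \ref{structure}), $\gamma_{u\stuffle v}=\gamma_u\gamma_v$ and $\gamma_{1_{Y^*}}=1$. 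By Proposition \ref{proposition2}, $Z_\gamma$ is then grouplike for $\Delta_{\stuffle}$.

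\emph{The identity $Z_\gamma=e^{\gamma y_1}Z_{\stuffle}$.} By Corollary \ref{factorisations}, $Z_\gamma=\prod^{\searrow}_{l\in\Lyn Y}e^{\gamma_{\Sigma_l}\Pi_l}$. For $l\neq y_1$ one has $\gamma_{\Sigma_l}=\zeta(\Sigma_l)$, since these harmonic sums converge. For $l=y_1$, $\gamma_{y_1}=\mathrm{f.p.}\H_1(n)=\gamma$, and $\Pi_{y_1}=y_1$. Since $y_1$ is the largest Lyndon word for the order \eqref{orders}, its factor stands leftmost in the decreasing product, giving $e^{\gamma y_1}Z_{\stuffle}$.

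\emph{Global renormalization.} On the $\Li$ side, \eqref{globalasymptotic} gives $\L(z)\sim_1 e^{-x_1\log(1-z)}Z_{\shuffle}$. Applying $\pi_Y$ and multiplying by $e^{y_1\log(1-z)}$ yields the limit $\pi_YZ_{\shuffle}$. On the $\H$ side, \eqref{asymptotic} gives $\H(n)\sim\exp\bigl(-\sum_k\H_{y_k}(n)(-y_1)^k/k\bigr)\pi_YZ_{\shuffle}$, so the renormalized limit is also $\pi_YZ_{\shuffle}$. Taking finite parts and using $\mathrm{f.p.}\H_{y_1}(n)=\gamma$ and $\mathrm{f.p.}\H_{y_k}(n)=\zeta(k)$ for $k\ge2$, we get $\mathrm{f.p.}\exp(-\sum_k\H_{y_k}(-y_1)^k/k)=B(y_1)$. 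This step uses the character property and the fact that all factors lie in $\C[\log n,\gamma]+o(1)$, and since they commute (they are series in $y_1$ only) the finite part passes through the exponential. Hence $Z_\gamma=B(y_1)\pi_YZ_{\shuffle}$, and dividing by $e^{\gamma y_1}$ gives $Z_{\stuffle}=B'(y_1)\pi_YZ_{\shuffle}$.

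\emph{Main obstacle.} The hard part is justifying that the asymptotic equivalence \eqref{asymptotic} holds coefficientwise with error terms $O(n^{-1}\log^b n)$. Only with that control can finite parts be taken termwise in the noncommutative product. I would handle it by transferring the singular expansion of $\L(z)/(1-z)$ at $z=1$ via $\frac1{1-z}\Li_w(z)=\sum_n\H_w(n)z^n$ (Example \ref{pos}), using standard transfer theorems on $(1-z)^{-1}\log^b(1-z)$, and proceeding word by word.
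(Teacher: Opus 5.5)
The paper does not prove this theorem itself (it is quoted from \cite{Daresbury,JSC,VJM} and rests on the asymptotics \eqref{globalasymptotic}--\eqref{asymptotic} stated just before it), and your proposal is correct and follows exactly that route: the finite part as a quasi-shuffle character, the Lyndon factorization with $e^{\gamma y_1}$ as the leftmost factor, and renormalization through the Newton--Girard form of $\sum_k\H_{y_1^k}y_1^k$, with the singular expansion of $\L$ at $z=1$, transferred to coefficients, as the analytic input. You rightly use $\exp\bigl(-\sum_{k\ge1}\H_{y_k}(n)(-y_1)^k/k\bigr)$, which fixes the sign misprint in \eqref{asymptotic}; one point to phrase more carefully is that $\mathrm{f.p.}$ is multiplicative because the expansions of $\H_w(n)$, $w\in Y^*$, contain no positive powers of $n$ (closure of the scale under products is not enough), and for the same reason any such coefficient tending to $0$ is automatically $O(n^{-1}\log^b n)$, so the error control you single out as the main obstacle comes for free once the coefficientwise limit is known.
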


Now, since $\zeta_{\shuffle}(x_0)=\Li_{x_0}(1)=0$ then, using the facts that
\begin{eqnarray}
\mathrm{f.p.}_{z\rightarrow1}\Li_{x_1}(z)=0,&&\{(1-z)^a\log^b((1-z)^{-1})\}_{a\in\Z,b\in\N},\\
\mathrm{f.p.}_{n\rightarrow+\infty}\H_{y_1}(n)=0,&&\{n^a\H_1^b(n)\}_{a\in\Z,b\in\N},
\end{eqnarray}
the polymorphism $\zeta$ in \eqref{zeta} can be extended as characters as follows \cite{VJM,CM}
\begin{eqnarray}
&\zeta_{\shuffle}:(\QX,\shuffle,1_{X^*})\longrightarrow({\mathcal Z},\times,1),&
\zeta_{\stuffle}:({\Q\pol{Y}},\stuffle,1_{Y^*})\longrightarrow({\mathcal Z},\times,1),
\end{eqnarray}
according to products and satisfying, for generators of length (resp. weight) one, $\zeta_{\shuffle}(x_0)=\zeta_{\shuffle}(x_1)=\zeta_{\stuffle}(y_1)=0$ and $\zeta_{\shuffle}(l)=\zeta_{\stuffle}(\pi_Yl)=\gamma_{l}=\zeta(l)$, $l\in\Lyn X\setminus X$.

\begin{remark}\label{algorithm}
\begin{enumerate}
\item $\{\scal{Z_{\shuffle}}{u}\}_{u\in X^*}$ and $\{\scal{Z_{\stuffle}}{v}\}_{v\in Y^*}$ represent, respectively,
\begin{eqnarray*}
\scal{Z_{\shuffle}}{w}=\zeta_{\shuffle}(w)=&\mathrm{f.p.}_{z\rightarrow1}\Li_w(z),&\{(1-z)^a\log^b((1-z)^{-1})\}_{a\in\Z,b\in\N},\\
\scal{Z_{\stuffle}}{w}=\zeta_{\stuffle}(w)=&\mathrm{f.p.}_{n\rightarrow+\infty}\H_w(n),&\{n^a\H_1^b(n)\}_{a\in\Z,b\in\N}.
\end{eqnarray*}

\item Theorem \ref{pont} provides equations bridging stuctures of polyzetas and the algorithm {\bf LocalCoordinateIdentification} \cite{tokyo}, identying the locale coordinates in each equation, gives the kernels of $\zeta$ as $\shuffle$ or $\stuffle$ ideals (encoding the algebraic relations among $\{\zeta(S_l)\}_{l\in\Lyn X\setminus X}$ or among $\{\zeta(\Sigma_l)\}_{l\in\Lyn Y\setminus\{y_1\}}$, independant on the constant $\gamma$ \cite{VJM,CM}).
These ideals are totally lexicographically ordered and constituted by homogenous in weight polynomials. These are viewed as confluent rewriting systems in which, the left side of each rule is the leading term of the associated homogenous polynomial and the irreducible terms (of each system) encode an algebraic basis of $\calZ$ \cite{tokyo}.
\end{enumerate}
\end{remark}

Similarly, asymptotic behaviors of $\{\Li^-_w\}_{w\in Y^*_0},\{\H^-_w\}_{w\in Y^*_0}$ are analyzed by

\begin{theorem}[\cite{Ngo}]\label{renormalization2}
\begin{enumerate}
\item $\H^-_w$ and $\Li^-_w$ are polynomial, of degree $(w)+|w|$ in $\Q[n]$ and $\Z[(1-z)^{-1}]$, respectively, for $w\in Y_0^*,n\in\mathbb{N}_+,z\in\C,\abs{z}<1$. Hence,
\begin{eqnarray*}
\H^-_{w}(n)\sim_{+\infty}n^{(w)+\abs{w}}C^-_w
&\mbox{and}&
\Li^-_{w}(z)\sim_1(1-z)^{-(w)-\abs{w}}B^-_w,\\
C^-_w=\prod_{w=uv\atop v\neq 1_{Y_0^*}}((v)+\abs{v})^{-1}
&\mbox{and}&B^-_w=((w)+\abs{w})!C^-_w.
\end{eqnarray*}

\item Let $h(t):=\sum\limits_{w\in Y_0^*}{((w)+\abs w)!}{t^{(w)+\abs w}}w$ and $g(t):=\big(\sum\limits_{y\in Y_0}t^{(y)+1}y\big)^*$ and
\begin{eqnarray*}
\L^-:=\sum_{w\in Y_0^*}\Li^-_ww,\quad
\H^-:=\sum_{w\in Y_0^*}\H^-_ww,\quad
C^-:=\sum_{w\in Y_0^*}C^-_ww.
\end{eqnarray*}
Then the following global renormalization holds
\begin{eqnarray*}
\lim_{z\to1}h^{\odot-1}((1-z)^{-1})\odot\L^-(z) =\lim_{n\to+\infty}g^{\odot-1}(n)\odot\H^-(n)=C^-.
\end{eqnarray*}
\end{enumerate}
\end{theorem}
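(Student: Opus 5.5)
The plan is to treat the two items separately: Item 1 by explicit computation in each variable separately, Item 2 by building a rescaling operator and comparing with the rescaled generating series.

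\emph{Item 1.} I argue by induction on the length of $w=y_{s_1}\cdots y_{s_r}\in Y_0^*$, using the recursions
\begin{eqnarray*}
\H^-_{y_sv}(n)=\sum_{m=1}^{n}m^{s}\,\H^-_{v}(m-1),
&&
\Li^-_{y_sv}(z)=\theta_0^{s}\Big(\frac{z}{1-z}\Li^-_v(z)\Big).
\end{eqnarray*}
These follow from reading the nested sums of Example~\ref{neg} from the outermost index (for $\Li^-$, the operator $\theta_0=z\partial_z$ acts on the power series in $z$). They need care with the convention $0<n_1<\cdots<n_r$, so I would first reorganize the word order to match the index of $z$.

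For $\H^-$: if $\H^-_v(m)\sim m^{d_v}C^-_v$ with $d_v=(v)+\abs{v}$, then Faulhaber's formula (sum of a polynomial of degree $s+d_v$) gives that $\H^-_{y_sv}$ is a polynomial of degree $d_v+s+1=d_{y_sv}$. Its leading coefficient is $C^-_v/(d_v+s+1)$. Iterating this produces exactly the product $\prod_{w=uv,\,v\neq1}((v)+\abs{v})^{-1}$ over suffixes, which is $C^-_w$.

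For $\Li^-$: $\frac{z}{1-z}$ raises the pole order at $1$ by one, and each $\theta_0$ raises it by one more while multiplying the leading coefficient by the current order. So $\Li^-_w$ has a leading term $B^-_w(1-z)^{-d_w}$ with $B^-_{y_sv}=(d_v+1)(d_v+2)\cdots(d_v+s+1)B^-_v$. Comparing with $C^-$ gives $B^-_w=d_w!\,C^-_w$. Integrality over $\Z[(1-z)^{-1}]$ holds because $z=1-(1-z)$ and $\theta_0$ preserves $\Z[(1-z)^{-1}]$.

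Alternatively, $\Li^-_w$ is obtained from $\H^-_w$ via $(1-z)^{-1}\Li^-_w=\sum_n\H^-_w(n)z^n$. A polynomial of degree $d$ with leading coefficient $C$ gives $C\,d!\,(1-z)^{-d-1}$ at leading order. This gives the relation between $B^-_w$ and $C^-_w$ directly, and I would use it as a check.

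\emph{Item 2.} Here $\odot$ is the Hadamard product, and $h^{\odot-1}$, $g^{\odot-1}$ are coefficientwise inverses, which is legitimate because every coefficient is a nonzero monomial in $t$. The limit statement is then coefficientwise:
\begin{itemize}
\item the coefficient of $w$ in $h^{\odot-1}((1-z)^{-1})\odot\L^-(z)$ is $\Li^-_w(z)(1-z)^{d_w}/d_w!$, which tends to $B^-_w/d_w!=C^-_w$;
\item the coefficient of $w$ in $g(n)$ is $n^{d_w}$, because $g(t)=\big(\sum_y t^{(y)+1}y\big)^*$ is a $\conc$-character by Theorem~\ref{exchangeable} (Item~\ref{conccharacter}) and the exponent $(w)+\abs{w}$ is additive. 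So the coefficient of $w$ in $g^{\odot-1}(n)\odot\H^-(n)$ is $\H^-_w(n)/n^{d_w}\to C^-_w$.
\end{itemize}
Limits in $\ncs{\Q}{Y_0}$ are taken coefficientwise, so this concludes.

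The main obstacle is Item 1's exact leading constant. One must track the recursion with the correct variable order, i.e.\ whether the first or the last letter carries the outermost sum, so that the product really runs over the nonempty right factors $v$ of $w$. I would first check the small cases $w=y_0^r$ (using $\H^-_{y_0^r}(n)=\binom nr$ from Theorem~\ref{structure}) and $w=y_1$, then fix the convention accordingly.
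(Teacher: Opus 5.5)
The paper gives no proof of this theorem (it is quoted from \cite{Ngo} in the appendix), so there is no route to compare with. Your plan is the natural argument: induct on the first letter via $\H^-_{y_sv}(N)=\sum_{m=1}^{N}m^s\H^-_v(m-1)$ and $\Li^-_{y_sv}=\theta_0^s\big(\frac{z}{1-z}\Li^-_v\big)$, then read Item~2 coefficientwise, using that the coefficient of $w$ in $g(n)$ is $n^{(w)+\abs{w}}$. It works, except for one displayed formula.

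You write $B^-_{y_sv}=(d_v+1)(d_v+2)\cdots(d_v+s+1)B^-_v$, which has one factor too many. Since $\theta_0(1-z)^{-k}=k(1-z)^{-k-1}-k(1-z)^{-k}$ and $\frac{z}{1-z}\Li^-_v$ has pole order $d_v+1$, the $s$ applications of $\theta_0$ contribute $(d_v+1)\cdots(d_v+s)$. That is what your own verbal description (``multiplying by the current order'') says. As displayed, $w=y_1$ would give $B^-_{y_1}=2$. But $\Li^-_{y_1}(z)=z(1-z)^{-2}$ has leading coefficient $1=2!\cdot\frac12=d_{y_1}!\,C^-_{y_1}$, so your claimed comparison with $C^-$ would fail. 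With $s$ factors one gets $B^-_{y_sv}/B^-_v=(d_v+s)!/d_v!=d_{y_sv}!\,C^-_{y_sv}/(d_v!\,C^-_v)$, as required.

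Your alternative route is cleaner: $(1-z)^{-1}\Li^-_w=\sum_n\H^-_w(n)z^n$ combined with $\sum_n n^dz^n\sim d!\,(1-z)^{-d-1}$ yields $B^-_w=d_w!\,C^-_w$ in one line. It makes a better main argument than a mere check.

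The convention question you flag as the main obstacle can be settled before starting. Example~\ref{neg} as printed is not usable for $r\ge2$: with $z^{n_1}$ attached to the \emph{smallest} index, the series for $\Li^-$ diverges. There are two self-consistent readings:
\begin{itemize}
\item $n\ge n_1>\cdots>n_r>0$ with $z^{n_1}$ (the convention of \cite{Ngo}) gives the product over nonempty right factors $v$, as in the statement;
\item $0<n_1<\cdots<n_r\le n$ with $z^{n_r}$ gives a product over left factors instead (for instance $C^-_{y_1y_0}$ would be $1/6$ rather than $1/3$).
\end{itemize}
Your recursions already encode the first reading. With that convention fixed and the factor count corrected, the proof is complete.
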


Any polynomial $p\in(\Z[t],\times,1)$ of degree $d$ is associated with the polynomials $\hat p\in(\Q[t],\times,1)$ and $\check p\in(\Z[t],\shuffle,1)$, of same degree, as follows
\begin{eqnarray}\label{LimoinsHmoins}
\check p(t)=\sum_{0\le i\le d}p_it^{\shuffle i},
&p(t)=\displaystyle\sum_{0\le i\le d}p_it^i,
&\hat p(t)=\sum_{0\le i\le d}\frac{p_i}{i!}t^i.
\end{eqnarray}

Let us consider
\begin{eqnarray}
(n+\bullet):\N\longrightarrow\Q,&&i\longmapsto(n+i)_{n}=\frac{(n+i)!}{n!},\\
\chi:(\Q[y_1^*],\stuffle,1_{Y^*})\longrightarrow(\Q[(n+\bullet)_n],\times,1),&&
S\longmapsto\H_S,
\end{eqnarray}
and let $\Q[(n+\bullet)_n]$ be the set of polynomials (on $n$) expanded as follows
\begin{eqnarray}\label{bullet}
\forall\pi\in\Q[(n+\bullet)_n],&\deg(\pi)=d,&\pi=\sum_{0\le i\le d}\pi_i(n+i)_n.
\end{eqnarray}

\begin{proposition}[extension of $\Li_{\bullet}$, \cite{Ngo,CM}]\label{explicit}
Let $d=\abs{w}+(w)$, for $w\in Y_0^*\setminus\{1_{Y_0^*}\}$. For any $n\in\mathbb{N}_+$ and $z\in\C,\abs{z}<1$, with the notations in \eqref{LimoinsHmoins}--\eqref{bullet},  one has
\begin{enumerate}
\item There is an unique polynomial $p\in(\Z[t],\times,1)$ of degree $d$ such that
\begin{eqnarray*}
\Li^-_w(z)=&\Sum_{0\le k\le d}\frac{p_k}{(1-z)^k}&=\sum_{0\le k\le d}p_ke^{-k\log(1-z)},\\
\H^-_w(n)=&\Sum_{0\le k\le d}p_k{n+k\choose k}&=\sum_{0\le k\le d}\frac{p_k}{k!}(n+k)_n.
\end{eqnarray*}

\item There is an unique polynomial $R_w$ of degree $d$ in $(\Z[x_1^*],\shuffle,1_{X^*})$ such that
\begin{eqnarray*}
\Li_{R_w}(z)=\Li^-_w(z)=&p(e^{-\log(1-z)})&\in(\Z[e^{-\log(1-z)}],\times,1_{\Omega}),\cr
\H_{\pi_Y(R_w)}(n)=\H^-_w(n)=&\hat p((n+\bullet)_n)&\in(\Q[(n+\bullet)_n],\times,1).
\end{eqnarray*}
For $w=y_{s_1},\ldots y_{s_r}$, one explicitly has $\Li^-_w(z)=\Li_{R_w}(z)=\alpha_0^z(R_w)$, where\footnote{The $S_1(k,i)$'s and $S_2(k,j)$'s denote the Stirling numbers of first and second kind, respectively.}
\begin{eqnarray*}
R_w
&=&\sum_{k_1=0}^{s_1}\sum_{k_2=0}^{s_1+s_2-k_1}\ldots
\sum_{k_r=0}^{(s_1+\ldots+s_r)-\atop(k_1+\ldots+k_{r-1})}
\binom{s_1}{k_1}\binom{s_1+s_2-k_1}{k_2}\cdots\cr
&&\cdots\binom{s_1+\ldots+s_r-k_1-\ldots-k_{r-1}}{k_r}\shuffle\limits_{1\le i\le r}\rho_{k_i},\quad\mbox{and}\cr
\rho_{k_i}&=&\left\{\begin{array}{lcl}
x_1^*-1_{X^*}&\mbox{if}&k_i=0,\cr
\displaystyle\sum_{j=1}^{k_i}S_2({k_i},j)(j!)^2\sum_{l=0}^j
\frac{(-1)^{l}}{l!}\frac{(x_1^*)^{\shuffle(j-l+1)}}{(j-l)!}&\mbox{if}&k_i\neq0.
\end{array}\right.
\end{eqnarray*}

\item Extending by linearity $R_{\bullet}$ over $\Q\pol{Y_0}$, for $y_k$ and $y_l\in Y_0$, one has
\begin{eqnarray*}
\Li_{R_{y_k}\shuffle R_{y_l}}=\Li_{R_{y_k}}\Li_{R_{y_l}}=\Li^-_{y_k}\Li^-_{y_l}=\Li^-_{y_k\top y_l}=\Li_{R_{y_k\top y_l}}.
\end{eqnarray*}

\item The restriction $\Li_{\bullet}:(\Z[x_1^*],\shuffle,1_{X^*})\longrightarrow(\Z[e^{-\log(1-z)}],.,1_{\Omega})$ is bijective and then so is $R_{\bullet}:({\Z}\langle{Y}\rangle,\top,1_{Y^*})\longrightarrow(\Z[x_1^*],\shuffle,1_{X^*})$.

\item For any $k\ge1$, one has $\Li_{-k}=\Li_{R_{y_k}}$, where
\begin{eqnarray*}
R_{y_k}=x_1^*\shuffle R'_{y_k}&\mbox{and}&
R'_{y_k}=\sum_{i=0}^{k}i!S_2(k,i)(x_1^*-1)^{\shuffle i}
\end{eqnarray*}
and, conversely,
\begin{eqnarray*}
R_{y_0}=x_1^*-1_{X^*}&\mbox{and}&
(kx_1)^*=1_{X^*}+R_{y_0}+\sum_{j=2}^k\frac{S_1(k,j)}{(k-1)!}R_{y_{j+1}}.
\end{eqnarray*}
\end{enumerate}
\end{proposition}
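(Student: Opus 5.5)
The plan is to work at the level of generating series in $z$, where each $\Li^-_w$ is obtained from $\Li^-_{1_{Y_0^*}}=1$ by iterated applications of a fixed operator. For $w=y_{s}v$, the definition in Example \ref{neg} gives
\begin{eqnarray*}
\Li^-_{y_sv}(z)=\sum_{n_1\ge1}n_1^{s}z^{n_1}\Big(\sum_{n_1<n_2<\cdots}\cdots\Big).
\end{eqnarray*}
Reading the nested sum from the outermost index inward, one checks that $\Li^-_{y_{s_1}\cdots y_{s_r}}$ is obtained by alternating $\theta_0^{s}$ (with $\theta_0=z\partial_z$) and multiplication by $z/(1-z)$, or a related operator. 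In particular, $\Li^-_{y_0^r}=(z/(1-z))^r$ is consistent with Theorem \ref{structure}.

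\textbf{Item 1.} I would proceed by induction on $|w|$. The operator $\theta_0$ maps $\Z[(1-z)^{-1}]$ into itself and raises the degree in $t=(1-z)^{-1}$ by one, since $\theta_0 t=z t^2=t^2-t$. Multiplication by $z/(1-z)=t-1$ also raises the degree by one. Counting, the degree is $(w)+|w|=d$ and the coefficients are integers, which yields $p\in\Z[t]$; uniqueness follows from the algebraic independence of the powers of $t$. For $\H^-_w$, I would use the relation $\frac1{1-z}\Li^-_w(z)=\sum_n\H^-_w(n)z^n$ and expand $(1-z)^{-k-1}=\sum_n\binom{n+k}{k}z^n$. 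This produces $\H^-_w(n)=\sum_k p_k\binom{n+k}{k}$, which is exactly $\hat p((n+\bullet)_n)$.

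\textbf{Item 2.} Note that $\Li_{x_1^*}(z)=(1-z)^{-1}$, since $\Li_{x_1^k}=(-\log(1-z))^k/k!$. Moreover, $\Li_\bullet$ is a $\shuffle$-morphism by Theorem \ref{structure}. Hence $\Li_{(x_1^*)^{\shuffle k}}=(1-z)^{-k}$, so $R_w:=\check p(x_1^*)$ works. Uniqueness follows from the injectivity of $\Li_\bullet$ on this subalgebra, as in item 4. The harmonic-sum version comes from applying $\pi_Y$ and the same Cauchy-product argument: $\H_{\pi_Y (x_1^*)^{\shuffle k}}(n)=\binom{n+k}{k}$.

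For the explicit formula, I would expand $n_i^{s_i}$ via $n^{s}=\sum_j S_2(s,j)(n)_j$ and telescope the variables $n_i=(n_i-n_{i-1})+n_{i-1}$ binomially; this is the source of the nested $\binom{\cdot}{k_i}$. Each resulting single-variable block $\sum_m (m)_j z^m=j!z^j/(1-z)^{j+1}$ is then rewritten in powers of $t$ via $z=1-t^{-1}$, which produces the inner alternating sums. \textbf{I expect this bookkeeping to be the main obstacle}. I would first verify it for $r=1$ and then induct on $r$, using the nested recursion together with the fact that products of $\Li$'s correspond to $\shuffle$.

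\textbf{Items 3–5.} Item 3 is formal: $\Li_\bullet$ is a $\shuffle$-morphism, and $\top$ is defined in Theorem \ref{structure} so that $\Li^-_\bullet$ is a $\top$-morphism. For item 4, $\Li_\bullet$ maps $(x_1^*)^{\shuffle k}$ to $t^k$, a basis to a basis, so it is bijective onto $\Z[t]$. Composing its inverse with $\Li^-_\bullet$ via item 2 gives the statement for $R_\bullet$; this relies on the surjectivity from Theorem \ref{structure}, and injectivity would be read modulo $\ker\Li^-_\bullet$ as stated there.

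For item 5, I would write $n^k=\sum_i i!S_2(k,i)\binom{n}{i}$. Then $\sum_n\binom ni z^n=z^i/(1-z)^{i+1}=t\,(t-1)^i$, and $t-1\leftrightarrow x_1^*-1$, which gives $R_{y_k}=x_1^*\shuffle R'_{y_k}$. The converse is obtained by inverting with Stirling numbers of the first kind, applied to $(kx_1)^*=(x_1^*)^{\shuffle k}$ by Corollary \ref{star}.
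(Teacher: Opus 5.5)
The paper gives no proof of this proposition; it is quoted from \cite{Ngo,CM}. So there is no route to compare with, and I judge your proposal on its own merits. Your treatment of item 1, of existence and uniqueness in item 2, of item 3, and of $R_{y_k}=x_1^*\shuffle R'_{y_k}$ in item 5 is sound, with two small fixes. First, fix the convention to $n_1>\cdots>n_r>0$: read literally, $0<n_1<\cdots<n_r$ with $z^{n_1}$ makes your inner sums diverge. With the correct convention, $\Li^-_{y_sv}=\theta_0^{s}\big(\frac{z}{1-z}\Li^-_v\big)$ exactly. Second, compute $\Li_{(x_1^*)^{\shuffle k}}=\Li_{(kx_1)^*}=(1-z)^{-k}$ directly, rather than through the morphism property, which Theorem \ref{structure} only gives on polynomials.

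The genuine gap is that the two explicit identities you defer to ``bookkeeping'' and ``inverting with Stirling numbers'' cannot be proved, because as printed they are false.

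\emph{The formula for $R_w$.} Expanding $(x_1^*-1_{X^*})^{\shuffle j}$ binomially shows $\rho_k=x_1^*\shuffle R'_{y_k}=R_{y_k}$, and $\rho_0=R_{y_0}$. So by your own item 5 computation, $\Li_{\rho_k}=\sum_{m\ge1}m^kz^m$. For $r=1$ the displayed formula then gives $\Li_{R_{y_s}}=\sum_k\binom{s}{k}\Li_{\rho_k}=\sum_{m\ge1}(m+1)^sz^m\neq\Li_{-s}$. For example, it gives $R_{y_1}=\rho_0+\rho_1=(x_1^*)^{\shuffle 2}-1_{X^*}$, whereas item 5 gives $(x_1^*)^{\shuffle 2}-x_1^*$. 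Your planned check at $r=1$ therefore fails. Carrying out your change of variables ($m_i=n_i-n_{i+1}$, $m_r=n_r$) proves a corrected formula instead: $k_r=(s_1+\cdots+s_r)-(k_1+\cdots+k_{r-1})$ is forced, and the last binomial disappears.

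\emph{The converse in item 5.} Here $\Li_{R_{y_{j+1}}}$ has degree $j+2$ in $t=(1-z)^{-1}$, so the right-hand side has degree $k+2\neq k$ for $k\ge2$. Expanding $(1-z)^{-k}=\sum_n\binom{n+k-1}{k-1}z^n$ with $(n+1)\cdots(n+k-1)=\sum_{j}S_1(k,j)n^{j-1}$ ($S_1$ unsigned) yields the identity with $R_{y_{j-1}}$ in place of $R_{y_{j+1}}$.

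Item 4 is also false for $R_\bullet$ as stated, and your remark does not fully repair it.
\begin{itemize}
\item $R_\bullet$ is not injective. For instance $\Li^-_{y_3}=\Li^-_{y_2}+2\Li^-_{y_1y_1}$, since $2\sum_{n_1>n_2>0}n_1n_2z^{n_1}=\sum_n(n^3-n^2)z^n$.
\item On $\Z\langle Y\rangle$ it is not surjective either. For $w\in Y^+$ the last operator applied is $\theta_0$, and $\theta_0f=(t^2-t)f'(t)$, so $\Li^-_w\in t(t-1)\Z[t]$. Hence $x_1^*$, with $\Li_{x_1^*}=t$, is never reached.
\item The ``surjectivity'' in Theorem \ref{structure} is onto the span of the $\Li^-_w$. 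That is tautological and gives nothing here.
\end{itemize}
What does work is $R_{y_0^r}=(x_1^*-1_{X^*})^{\shuffle r}$, and these form a $\Z$-basis of $\Z[x_1^*]$. So $R_\bullet:\Z\langle Y_0\rangle\to\Z[x_1^*]$ is surjective and induces a bijection from $\Z\langle Y_0\rangle/\ker\Li^-_\bullet$; that is the correct form of the claim.
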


\begin{proposition}[extension of $\H_{\bullet}$, \cite{PMB}]\label{Indexation2}
For any $r\ge1,t\in\C,\abs{t}<1$, one has
\begin{eqnarray*}
\H_{(t^ry_r)^*}=\sum_{k\ge0}\H_{y_r^k}t^{kr}
=\exp\biggl(\sum_{k\ge1}\H_{y_{kr}}\frac{(-t^r)^{k-1}}k\biggr).
\end{eqnarray*}
Moreover, for any $\abs{a_s}<1$ and $\abs{b_s}<1$ and $\abs{a_s+b_s}<1$,
\begin{eqnarray*}
\H_{(\sum\limits_{s\ge1}a_sy_s)^*}\H_{(\sum\limits_{s\ge1}b_sy_s)^*}=\H_{(\sum\limits_{s\ge1}(a_s+b_s)y_s+\sum\limits_{r,s\ge1}a_sb_ry_{s+r})^*}.
\end{eqnarray*}
Hence, for any $r$ and $s\ge1$, one has
\begin{eqnarray*}
y_r^*=\exp_{\stuffle}\Bigl(\sum_{k\ge1}y_{kr}\frac{(-1)^{k-1}}k\Bigr)
&\mbox{and}&y_r^*\stuffle y_s^*=\sum_{k=0}^r{r+s-k\choose s}{s\choose k}y_{r+s-k}
\end{eqnarray*}
and then, for any $n\ge0$,
\begin{eqnarray*}
y_r^n=\frac{(-1)^n}{n!}\sum_{s_1,\ldots,s_n>0,s_1+\ldots+ns_n=n}
\frac{(-y_r)^{\stuffle s_1}}{1^{s_1}}\stuffle\ldots\stuffle\frac{(-y_{nr})^{\stuffle s_n}}{n^{s_n}}.
\end{eqnarray*}
\end{proposition}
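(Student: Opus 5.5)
Apply the extended harmonic-sum character $\H_\bullet$ to Kleene stars of the plane in $Y$. The basic tool is that, by Theorem \ref{structure}, $\H_\bullet$ is a $\stuffle$-morphism on polynomials. For $\abs{t}<1$ it extends by summation over rational series $S$ with summable coefficients, via $\H_S(n):=\sum_{w}\scal{S}{w}\H_w(n)$. For fixed $n$ the sum is finite in length, since $\H_w(n)=0$ when $\abs{w}>n$, and absolutely convergent in weight. Hence it remains a $\stuffle$-character on such series.

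First, by definition,
\[
\H_{(t^ry_r)^*}(n)=\sum_{k\ge0}t^{kr}\H_{y_r^k}(n),
\]
and the right-hand side is the generating function of elementary symmetric functions in the variables $x_m=m^{-r}$, $1\le m\le n$. Indeed, $\H_{y_r^k}(n)=e_k(x_1,\ldots,x_n)$. So
\[
\sum_k t^{kr}\H_{y_r^k}(n)=\prod_{m=1}^n(1+t^rm^{-r}).
\]
Taking the logarithm and expanding $\log(1+u)=\sum_{k\ge1}(-1)^{k-1}u^k/k$ gives
\[
\sum_{k\ge1}\frac{(-1)^{k-1}t^{kr}}{k}\,p_k=\sum_{k\ge1}\frac{(-1)^{k-1}t^{kr}}{k}\,\H_{y_{kr}}(n),
\]
which is the stated exponential formula (the Newton--Girard identity). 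Note that the displayed exponent in the statement should be read with $t^{kr}$ and a sign matching $(-t^r)^{k-1}t^r$. I will reconcile the normalization with this computation.

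Second, the product formula follows from Proposition \ref{simple} with $\phi(y_s,y_r)=y_{s+r}$, i.e.\ the case $\stuffle$. That proposition gives
\[
\Big(\sum_s a_sy_s\Big)^*\stuffle\Big(\sum_s b_sy_s\Big)^*=\Big(\sum_s(a_s+b_s)y_s+\sum_{r,s}a_sb_ry_{s+r}\Big)^*.
\]
Applying the $\stuffle$-character $\H_\bullet$ to both sides then yields the identity. The hypotheses $\abs{a_s},\abs{b_s},\abs{a_s+b_s}<1$ guarantee convergence of each $\H_{(\cdot)^*}(n)$, so the only work is justifying that the character property passes to these infinite sums. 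I do this by truncating by weight and using continuity in the ultrametric topology \eqref{distance}, combined with finiteness in length for fixed $n$.

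Third, the series identities are obtained by lifting the first step through the injectivity of $\H_\bullet$ (Theorem \ref{structure}), applied degree by degree in $t$. Comparing coefficients of $t^{kr}$ in $(t^ry_r)^*$ and in
\[
\exp_{\stuffle}\Big(\sum_k (-1)^{k-1}t^{kr}y_{kr}/k\Big)
\]
gives $y_r^*=\exp_{\stuffle}\big(\sum_k(-1)^{k-1}y_{kr}/k\big)$, where $\exp_{\stuffle}$ is defined coefficientwise as a formal series in $t$. The formula for $y_r^n$ is then the inverse Newton expansion $e_n=\frac{(-1)^n}{n!}\sum\cdots$, in its cycle-index form, read off from this exponential by extracting the $t^{nr}$ coefficient and expanding the $\stuffle$-powers. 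The formula for $y_r^*\stuffle y_s^*$ comes from Proposition \ref{simple} with $a=1$ at $r$ and $b=1$ at $s$, followed by expanding the star of a three-term plane.

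The main obstacle is this lift. Injectivity of $\H_\bullet$ is stated only on polynomials, so the argument must be done in each $t$-degree, where everything is polynomial. The stated combinatorial coefficients also need to be checked carefully against this expansion, and that bookkeeping is where errors are most likely.
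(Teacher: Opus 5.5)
The paper gives no proof of this proposition (it is quoted from \cite{PMB}), so your proposal is judged on its own merits. Much of it is sound:
\begin{itemize}
\item $\H_{y_r^k}(n)=e_k(1^{-r},\ldots,n^{-r})$, so $\sum_k t^{kr}\H_{y_r^k}(n)=\prod_{m\le n}(1+t^rm^{-r})$. Taking logarithms gives the exponent $-\sum_{k\ge1}\H_{y_{kr}}(-t^r)^k/k$, so you are right that the displayed $(-t^r)^{k-1}/k$ is missing a factor $t^r$.
\item The product formula does follow from Proposition \ref{simple} with $\phi(y_s,y_r)=y_{s+r}$ together with the $\stuffle$-morphism property of $\H_\bullet$.
\item Lifting $y_r^*=\exp_{\stuffle}(\sum_{k\ge1}(-1)^{k-1}y_{kr}/k)$ weight by weight through the injectivity of $\H_\bullet$ is legitimate.
\end{itemize}
One step needs repair. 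The hypotheses $|a_s|,|b_s|,|a_s+b_s|<1$ do \emph{not} make $\H_{(\sum_sa_sy_s)^*}(n)=\prod_{m\le n}(1+\sum_sa_sm^{-s})$ converge. For example, with $a_s=b_s=1/3$ the factor $m=1$ is $1+\sum_sa_s=+\infty$. You should instead assume $\sum_s|a_s|,\sum_s|b_s|<\infty$, or read the identity formally in the $a_s,b_s$. The character property then passes to the infinite sums by absolute convergence, since all $\H_w(n)\ge0$ and $\stuffle$ has nonnegative structure constants. The ultrametric topology \eqref{distance} says nothing about this numerical convergence.

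The genuine gap is that you promise to derive the last two displays, but neither can be derived because neither is true as written.

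For the first of them, Proposition \ref{simple} gives $y_r^*\stuffle y_s^*=(y_r+y_s+y_{r+s})^*$ (for $r\neq s$). This is an infinite series with constant term $1$. No expansion of it can equal $\sum_{k=0}^r\binom{r+s-k}{s}\binom{s}{k}y_{r+s-k}$, which is a combination of letters with constant term $0$. For $r=s=1$ you get $(2y_1+y_2)^*$ against $y_1+2y_2$. The coefficients $\binom{r+s-k}{s}\binom{s}{k}=\frac{(r+s-k)!}{k!(r-k)!(s-k)!}$ count quasi-shuffles of words of lengths $r$ and $s$ with $k$ contractions. The identity that does hold with exactly these coefficients is $y_0^r\stuffle y_0^s=\sum_{k=0}^r\binom{r+s-k}{s}\binom{s}{k}y_0^{r+s-k}$ in $\ncp{\Q}{Y_0}$, which is $\binom nr\binom ns=\sum_k\binom{r+s-k}{s}\binom sk\binom n{r+s-k}$ under $\H^-_\bullet$.

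For the second, extracting the $t^{nr}$-coefficient of your exponential, as you propose, yields
\[
y_r^n=(-1)^n\sum_{s_1+2s_2+\cdots+ns_n=n,\ s_k\ge0}\frac{(-y_r)^{\stuffle s_1}}{1^{s_1}s_1!}\stuffle\cdots\stuffle\frac{(-y_{nr})^{\stuffle s_n}}{n^{s_n}s_n!},
\]
which is not the displayed formula. With $s_k>0$ the displayed sum is empty for $n\ge2$. Even with $s_k\ge0$, the prefactor $1/n!$ in place of $\prod_k1/s_k!$ gives $y_r^2+\frac14y_{2r}$ at $n=2$.

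Checking these small cases would have exposed both problems. You should state the corrected identities rather than defer the ``bookkeeping''.
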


Since $\Li_{(tx_1)^*}(z)=(1-z)^{-t}$ then, for any $t\in\C,\abs{t}<1$, by Propositions \ref{explicit}-\ref{Indexation2}, the characters $\zeta_{\shuffle}$ and $\gamma_{\bullet}$ can be, now, extended algebraically as follows \cite{PMB}
\begin{eqnarray}
\zeta_{\shuffle}:(\CX\shuffle\ncs{\C_{\mathrm{exc}}^{\mathrm{rat}}}{X},\shuffle,1_{X^*})\longrightarrow(\C,.,1),&&
{(tx_0)^*\atop(tx_1)^*}\longmapsto{1_{\C}},\\
\gamma_{\bullet}:(\CY\stuffle\ncs{\C_{\mathrm{exc}}^{\mathrm{rat}}}{Y},\stuffle,1_{Y^*})\longrightarrow(\C,.,1),&&
(t^ry_r)^*\longmapsto\Gamma_{y_r}^{-1}(1+t),
\end{eqnarray}
where, for any $r\ge1$ and $t\in\C,\abs{t}<1$, one put $\Gamma_{y_r}(1+t):=e^{-\ell_r(t)}$ and
\begin{eqnarray*}
\ell_1(z):=\gamma z-\sum_{k\ge2}\zeta(k)\frac{(-z)^k}k&\mbox{and}&
\ell_r(z):=-\sum_{k\ge1}\zeta(kr)\frac{(-z^r)^k}k,r\ge2.
\end{eqnarray*}

\begin{theorem}[\cite{PMB}]\label{plein}
Let $\C[L]$ and $\C[E]$ be the algebras of $L=\span_\C\{\ell_r\}_{r\ge1}$ and $E=\span_{\C}\{e^{\ell_r}\}_{r\ge1}$, respectively. Then one has
\begin{enumerate}
\item\label{free} $(\ell_r)_{r\ge1}$ and $(e^{\ell_r})_{r\ge1}$ are $\C$-linearly free and free from $1_{\calH(\Omega)}$.
\item $(\ell_r)_{r\ge1}$ and $(e^{\ell_r})_{r\ge1}$ are $\C$-algebraically independent.
\item For any $r\ge1$, one has
\begin{enumerate}
\item $\ell_r$ and $e^{\ell_r}$ are $\C$-algebraically independent.
\item $\ell_r$ is holomorphic on the open unit disc, $D_{<1}$,
\item $e^{\ell_r}$ (resp. $e^{-\ell_r}$) is entire (resp. meromorphic)
admitting a countable set of isolated zeroes (resp. poles) on the complex plane.
\end{enumerate}
\item One has $E\cap L=\{0\}$ and, more generally, $\C[E]\cap\C[L]=\C.1_{\calH(\Omega)}$.
\end{enumerate}
\end{theorem}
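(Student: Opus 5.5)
The plan is to work with the explicit forms of $\ell_r$ and $e^{\ell_r}$. First I rewrite them as classical entire/meromorphic objects. For $r=1$, the Weierstrass product of the Gamma function gives
\begin{eqnarray*}
\log\Gamma(1+z)=-\gamma z+\sum_{k\ge2}\zeta(k)\frac{(-z)^k}{k},
\end{eqnarray*}
so $\ell_1(z)=-\log\Gamma(1+z)$ and $e^{\ell_1}=1/\Gamma(1+z)$. For $r\ge2$, expanding $\zeta(kr)=\sum_{n\ge1}n^{-kr}$ and summing the logarithmic series gives
\begin{eqnarray*}
\ell_r(z)=\sum_{n\ge1}\log\Big(1+\frac{z^r}{n^r}\Big),\qquad
e^{\ell_r(z)}=\prod_{n\ge1}\Big(1+\frac{z^r}{n^r}\Big).
\end{eqnarray*}
Equivalently, $e^{\ell_r(z)}=\prod_{\xi^r=-1}\Gamma(1-\xi z)^{-1}$, up to the normalising factor $e^{\gamma z\sum\xi}$, which is trivial for $r\ge2$. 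This settles item 3(b) immediately, since the series converges for $|z|<1$. It also settles item 3(c): the product converges normally on compacts when $r\ge2$, and $1/\Gamma$ is entire for $r=1$. The zeroes of $e^{\ell_r}$ are exactly the points $n\xi$ with $n\ge1$ and $\xi^r=-1$, a countable discrete set. Hence $e^{-\ell_r}$ is meromorphic with poles exactly there.

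Next come the independence statements, which I derive from this zero/pole structure. For item 1 and the $e^{\ell_r}$ half of item 2, I compare zero sets. The zero set $Z_r=\{n\xi:\xi^r=-1\}$ contains the points $n e^{i\pi/r}$, which lie on a ray belonging to no other $Z_s$ with $s<r$ (angle $\pi/r$). Suppose $Q(e^{\ell_1},\ldots,e^{\ell_N})=0$ is a nontrivial polynomial relation. I take $r$ maximal among the indices occurring and write $Q=\sum_j Q_j\,(e^{\ell_r})^j$, where the $Q_j$ involve only the other $e^{\ell_s}$. Then I compare vanishing orders along the ray $\arg z=\pi/r$: the functions $e^{\ell_s}$ with $s\ne r$ do not vanish at the points $ne^{i\pi/r}$ for large $n$, while $e^{\ell_r}$ has simple zeros there. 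This lets me peel off the lowest-order $j$, giving $Q_{j_0}=0$ on an infinite set with an accumulation point at infinity. That set is not enough for analytic continuation, so I instead use a local argument: near each zero $z_0$, $e^{\ell_r}\sim c(z-z_0)$. Then $\sum_j Q_j(z_0)c^j(z-z_0)^j+\ldots=0$ forces $Q_0(z_0)=0$ for infinitely many $z_0$. To conclude $Q_0\equiv0$ and proceed by induction, I would use growth: the $Q_j$ are entire functions of finite order built from the other $e^{\ell_s}$, whose zeros lie on other rays. That is the delicate point.

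The statements about $\ell_r$ then follow by differentiation. Since $\ell_r'$ is a meromorphic function on $\C$ with simple poles at $Z_r$ and residue $1$ (from the logarithmic derivative), an algebraic relation $P(\ell_1,\ldots,\ell_N)=0$ can be continued analytically along paths. Going around a point of $Z_r$ shifts $\ell_r$ by $2\pi i$ and leaves the other $\ell_s$ unchanged. Hence $P(\ldots,\ell_r+2\pi i m,\ldots)=0$ for all $m\in\Z$, so $P$ does not depend on its $r$-th variable; iterating gives $P$ constant, hence zero. The same monodromy argument gives item 3(a), since a relation $P(\ell_r,e^{\ell_r})=0$ continues to $P(\ell_r+2\pi i m,e^{\ell_r})=0$, which forces $P$ to be independent of its first argument and reduces to the algebraic transcendence of the non-constant function $e^{\ell_r}$. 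Linear freeness from $1_{\calH(\Omega)}$ is a special case.

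Finally, for item 4, I use a monodromy versus single-valuedness argument. Elements of $\C[E]$ are entire, hence single-valued. An element $P(\ell_1,\ldots,\ell_N)$ of $\C[L]$ that is single-valued must be invariant under all the shifts $\ell_r\mapsto\ell_r+2\pi i m$. By the algebraic independence just proved, this forces $P$ to be constant. Hence $\C[E]\cap\C[L]=\C.1$, and in particular $E\cap L=\{0\}$.

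The main obstacle is the independence of the $e^{\ell_r}$: turning infinitely many zeros of $Q_0$ into $Q_0\equiv0$. I would first try to replace that step by monodromy as well, passing to the logarithmic derivatives and reusing the $\ell_r$ argument: a polynomial relation in the $e^{\ell_r}$ yields one in the $\ell_r$ after taking logarithms of monomials, via the $\Z$-independence of exponent vectors. That route reduces everything to the clean monodromy argument.
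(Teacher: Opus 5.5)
\textbf{Verdict: there is a genuine gap.} The paper itself only cites \cite{PMB} for this theorem, so this review judges your proposal on its own. Everything you do with the $\ell_r$ is sound: items 3(a)--(c), the algebraic independence and linear freeness of $(\ell_r)_{r\ge1}$, and item 4, which needs only that independence plus the fact that $\C[E]$ consists of entire functions. What is missing is any proof that $(e^{\ell_r})_{r\ge1}$ is algebraically independent. Since you route item 1 for the exponentials through the same zero-set comparison, their linear freeness from $1$ is not proved either.

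Your zero-peeling route stops exactly where the content lies. You would need to show that the nonzero entire function $Q_0(e^{\ell_1},\ldots,e^{\ell_{N-1}})$ cannot vanish at all the points $ne^{i\pi/N}$. Growth cannot give this: every $e^{\ell_r}$ has order $1$, and for $r\ge2$ it is even of exponential type (e.g. $e^{\ell_2(z)}=\sinh(\pi z)/(\pi z)$). Their zeros also have linear counting function, so counting yields no contradiction.

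Your fallback, ``taking logarithms of monomials'' to turn $\sum_\alpha c_\alpha e^{\alpha\cdot\ell}=0$ into a polynomial relation among the $\ell_r$, is not a valid step. In fact no argument using only the monodromy or zero structure of the logarithms can work. Take $g_1=\log z$ and $g_2=\log(z-1)$ on a disc avoiding $0,1$. Your own monodromy argument shows $g_1,g_2$ are algebraically independent and $\{1,g_1,g_2\}$ is linearly free, and $e^{g_1},e^{g_2}$ have disjoint simple zero sets. Yet $e^{g_2}=e^{g_1}-1$. So a transcendence input specific to these entire functions is indispensable, for instance an asymptotic analysis of a putative relation $\sum_\alpha c_\alpha e^{\alpha\cdot\ell}=0$ via Stirling's formula for $\log\Gamma(1-\xi z)$ in zero-free sectors.

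For item 1 alone there is a one-line fix. We have $\ell_1=\gamma z+O(z^2)$ and $e^{\ell_1}-1=\gamma z+O(z^2)$, while for $r\ge2$, $\ell_r=\zeta(r)z^r+O(z^{2r})$ and $e^{\ell_r}-1=\zeta(r)z^r+O(z^{2r})$. These distinct valuations at $0$ give linear freeness from $1$ for both families immediately.

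There is also a smaller error in the monodromy part. It is false that going around an arbitrary point of $Z_r$ shifts only $\ell_r$, because $Z_r\subset Z_{(2k+1)r}$; for example $-1\in Z_1\cap Z_3\cap Z_5\cap\cdots$. So you cannot shift each $\ell_r$ separately, and ``invariant under all the shifts'' in item 4 is not available as stated. The fix is as follows.
\begin{itemize}
\item Take $r$ to be the largest index occurring in $P$, and loop around a point $ne^{i\pi/r}$, which lies in no other relevant $Z_s$.
\item From $P(\ell_1,\ldots,\ell_{r-1},\ell_r+2\pi i m)=0$ for all $m\in\Z$, deduce $P(\ell_1,\ldots,\ell_{r-1},s)=0$ for every $s\in\C$.
\item Hence every coefficient of $X_r^j$ vanishes at $(\ell_1,\ldots,\ell_{r-1})$, and you conclude by downward induction.
\end{itemize}
The same descending scheme makes item 4 rigorous; there you obtain the vanishing of the coefficients of $X_r^j$ for $j\ge1$. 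Run this way, your argument in fact shows that the $\ell_r$ are algebraically independent over the field of single-valued meromorphic functions. So once the independence of the $e^{\ell_r}$ is established, the joint independence of both families follows as well.
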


\begin{proposition}[\cite{CM}]\label{reg_anal}
Using notations in \eqref{LimoinsHmoins}--\eqref{bullet} and Proposition \ref{explicit}, one has
\begin{enumerate}
\item For any $w\in Y^*$, there is an unique polynomial $p\in\Z[t]$ of valuation $1$ and of degree $(w)+\abs{w}$ such that $R_w=\check p(x_1^*)$ and
\begin{eqnarray*}
\mathrm{f.p.}_{z\rightarrow1}\Li_{R_w}(z)=p(1)\in\Z,
&&\{(1-z)^{a}\log^b((1-z)^{-1})\}_{a\in\Z,b\in\N},\cr
\mathrm{f.p.}_{n\rightarrow+\infty}\H_{\pi_Y(R_w)}(n)=\hat p(1)\in\Q,
&&\{n^{a}\log^b(n)\}_{a\in\Z,b\in\N}.
\end{eqnarray*} 

\item Let $Q\in(\Z[x_0^*,(-x_0)^*,x_1^*],\shuffle,1_{X^*})/(x_0^*\shuffle {x_{1}}^*-{x_1}^*+1)$. Then $\Li_{Q}$ and $\P_{Q}:=\Li_{x_1^*\shuffle Q}\in\Z[z,z^{-1},e^{-\log(1-z)}]$ (and conversely). Moreover,
\begin{eqnarray*}
\mathrm{f.p.}_{z\rightarrow1}\P_{Q}(z)=&\mathrm{f.p.}_{z\rightarrow1}\Li_{Q}(z)\in\Z,& 
\{(1-z)^{a}\log^b((1-z)^{-1})\}_{a\in\Z,b\in\N},\cr
\mathrm{f.p.}_{n\rightarrow+\infty}\scal{\P_Q}{z^n}=&\mathrm{f.p.}_{n\rightarrow+\infty}\H_{\pi_Y(Q)}(n)\in\Q,&\{n^{a}\log^b(n)\}_{a\in\Z,b\in\N}.
\end{eqnarray*}
\end{enumerate}
\end{proposition}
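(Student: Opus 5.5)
The statement to prove is Proposition \ref{reg_anal}. The plan has two parts: first the one-variable case for $R_w$, then the reduction of $Q$ to that case.

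\textbf{Item 1.} I would start from Proposition \ref{explicit}. It gives, for $w\in Y^*$ with $d=(w)+\abs{w}$, a unique $p\in\Z[t]$ of degree $d$ such that
\[
\Li^-_w(z)=\sum_{0\le k\le d}p_k(1-z)^{-k},\qquad
\H^-_w(n)=\sum_{0\le k\le d}\frac{p_k}{k!}(n+k)_n .
\]
It also gives a unique $R_w\in(\Z[x_1^*],\shuffle,1_{X^*})$ with $\Li_{R_w}=\Li^-_w$. The steps are:
\begin{enumerate}
\item Since $\Li_{(x_1^*)^{\shuffle k}}(z)=\Li_{(kx_1)^*}(z)=(1-z)^{-k}$ (by Corollary \ref{star}, item \ref{trois}, and $\Li_{(tx_1)^*}=(1-z)^{-t}$), the bijectivity in Proposition \ref{explicit}(4) forces $R_w=\sum_k p_k(x_1^*)^{\shuffle k}=\check p(x_1^*)$.
\item For the valuation, note that $\Li^-_w(0)=0$ for $w\neq1_{Y^*}$, because every sum $\sum_{0<n_1<\cdots}$ starts at $z^{n_1}$ with $n_1\ge1$. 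This gives $\sum_k p_k=p(1)=0$, which is not the constant term $p_0$. So I would re-derive the valuation from the explicit formula for $\rho_{k_i}$. Each factor $\rho_{k_i}$ either equals $x_1^*-1_{X^*}$ or has no constant part in $t=x_1^*$, so each factor is divisible by $t$ after the correct bookkeeping.
\item The finite part at $z\to1$ in the scale $\{(1-z)^a\log^b((1-z)^{-1})\}$ keeps only the $(1-z)^0$ coefficient, which is $p_0$. I expect the intended normalization makes this equal to $p(1)$, and I would reconcile this via the scale convention. For $\H$ the same computation applies, using the expansion $(n+k)_n/k!=\binom{n+k}{k}$, a polynomial in $n$ of degree $k$. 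Its constant term in $n$ is $1$, so the finite part is $\sum_k p_k/k!=\hat p(1)$.
\end{enumerate}

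\textbf{Item 2.} The relation $x_0^*\shuffle x_1^*=x_1^*-1+1$, together with $\Li_{x_0^*}=z$ (up to the convention $\Li_{(tx_0)^*}=z^t$), $\Li_{(-x_0)^*}=z^{-1}$ and $\Li_{x_1^*}=(1-z)^{-1}$, shows that the morphism $\Li_\bullet$ maps the quotient $\shuffle$-algebra into $\Z[z,z^{-1},(1-z)^{-1}]$. The quotient is consistent with this map because
\[
\frac{z}{1-z}=\frac1{1-z}-1 .
\]
For the converse, I would show that each monomial $z^a(1-z)^{-b}$ is attained. I would then write $\P_Q=\Li_{x_1^*}\Li_Q=\Li_Q/(1-z)$ by the $\shuffle$-morphism property.

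The finite parts are then compared directly:
\begin{enumerate}
\item At $z\to1$, multiplying by $(1-z)^{-1}$ shifts the monomials, but for a Laurent polynomial in $(1-z)$ the finite-part coefficient changes only by terms that vanish once one expands $z^a=(1-(1-z))^a$. I would make this precise by reducing, via item 1, to the basis $(x_1^*)^{\shuffle k}\shuffle$ (powers of $x_0^*$).
\item The coefficient $\scal{\P_Q}{z^n}$ is the partial sum of the coefficients of $\Li_Q$, which is exactly $\H_{\pi_Y(Q)}(n)$ by the identity $\frac{1}{1-z}\Li_s=\sum\H_s(n)z^n$ of Example \ref{pos}, extended linearly.
\end{enumerate}

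\textbf{Main obstacle.} The hardest step is matching the normalization "valuation $1$" and "$p(1)$" with the finite-part convention. I would first test it on $w=y_0$, where $R_{y_0}=x_1^*-1$ and $p=t-1$, to fix the conventions before carrying out the general argument.
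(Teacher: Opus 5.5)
The paper quotes this proposition from \cite{CM} without proof, so your proposal has to stand on its own. It does not: the normalization issue you set aside as the ``main obstacle'' is the actual content of the statement, and the computations you offer to bridge it are incorrect.

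\textbf{Finite parts.} Taken literally (the coefficient of $(1-z)^0\log^0$, resp.\ of $n^0\log^0 n$), the $\H$-claim of item 1 is false. Since $\H^-_w(n)=\sum_kp_k\binom{n+k}{k}$ and $\binom{n+k}{k}$ has constant term $1$ in $n$, the literal finite part is $\sum_kp_k=p(1)$, not $\sum_kp_k/k!$; that step of yours is an arithmetic slip. By your own remark $p(1)=\Li^-_w(0)=0$. For example, $\H^-_{y_1}(n)=n(n+1)/2$ has finite part $0$, while $p=t^2-t$ gives $\hat p(1)=-1/2$.

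Your shift argument in item 2 fails the same way. Dividing by $1-z$ replaces the $(1-z)^0$-coefficient by the $(1-z)^1$-coefficient, and already $Q=1_{X^*}$ gives finite part $1$ for $\Li_Q=1$ but $0$ for $\P_Q=(1-z)^{-1}$.

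The missing idea is that these finite parts are taken termwise on the word expansions. In other words, they are the values of the regularizing characters introduced after Proposition \ref{Indexation2}:
$\zeta_{\shuffle}$ is a $\shuffle$-character with $\zeta_{\shuffle}(x_1)=0$, hence $\zeta_{\shuffle}((kx_1)^*)=1$;
$\gamma_\bullet$ is a $\stuffle$-character with $\gamma_{(ty_1)^*}=\Gamma(1+t)^{-1}$, hence $\gamma_{(ky_1)^*}=1/k!$ ($1/\Gamma$ being entire).
Your step 1 is correct and gives $R_w=\sum_kp_k(kx_1)^*$ and $\pi_Y(R_w)=\sum_kp_k(ky_1)^*$. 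Item 1 is then one line: $\zeta_{\shuffle}(R_w)=\sum_kp_k=p(1)$ and $\gamma_{\pi_Y(R_w)}=\sum_kp_k/k!=\hat p(1)$.

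The first identity of item 2 becomes $\zeta_{\shuffle}(x_1^*\shuffle Q)=\zeta_{\shuffle}(x_1^*)\zeta_{\shuffle}(Q)=\zeta_{\shuffle}(Q)$. This must be done on representatives in $\Z[x_0^*,(-x_0)^*,x_1^*]$, since $\zeta_{\shuffle}(x_0^*\shuffle x_1^*-x_1^*+1)=1\neq0$ and so $\zeta_{\shuffle}$ does not pass to the quotient. For the $\Li$-part of item 1 you actually had a literal proof without noticing: the literal finite part is $p_0=0$ by valuation $1$, and $p(1)=0$ as well.

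\textbf{Valuation and the rest.} Your factor-by-factor divisibility argument fails. $\rho_0=x_1^*-1_{X^*}$ corresponds to $t-1$, which is not divisible by $t$, and the term with all $k_i=0$ contributes $(t-1)^r$, so divisibility by $t$ could only come from cancellation. Moreover, that displayed formula, read literally, gives $R_{y_1}=\rho_0+\rho_1=t^2-1$ rather than $t^2-t$, so it is not a safe basis.

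A working route uses the variable $t=(1-z)^{-1}$, in which $z/(1-z)=t-1$ and $\theta_0=z\partial_z=t(t-1)\partial_t$. Removing the letter $y_s$ that carries $z$ expresses $\Li^-_w$ as $\theta_0^s$ applied to $(t-1)$ times $\Li^-$ of the shorter word. Consequently:
for $s\ge1$ the result lies in $t(t-1)\Z[t]$;
the degree grows by $s+1$;
since the $t$-coefficient of $t(t-1)g'$ is $-g_1$, the $t$-coefficient stays nonzero by induction.
This gives valuation exactly $1$ and degree $(w)+|w|$ for $w\neq1_{Y^*}$ (for $w=1_{Y^*}$ one has $p=1$).

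Finally, your partial-sum identification $\scal{\P_Q}{z^n}=\H_{\pi_Y(Q)}(n)$ is fine on combinations of $1_{X^*}$ and words ending in $x_1$. It does not cover the factors $x_0^*,(-x_0)^*$. These involve words ending in $x_0$, outside the domain of $\pi_Y$, and $\Li_{(-x_0)^*}=z^{-1}$ has no Taylor expansion at $0$, so that case needs a separate argument.
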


Let $\Upsilon\in\serie{\Q[(n)_{\bullet}]}{Y}$ (resp. $\Lambda\in\serie{\Q[e^{-\log(1-z)}][\log(z)]}{X}$ be the generating series of $\{\H_{\pi_Y(R_w)}\}_{w\in Y^*}$ (resp.
$\{\Li_{R_{\pi_Y(w)}}\}_{w\in X^*}$), with $\scal{\Lambda(z)}{x_0}=\log(z)$. Let $Z^-_{\gamma}\in\serie{\Q}{Y}$ (resp. $Z^-_{\shuffle}\in\serie{\Z}{X}$) be the generating series of $\{\gamma_{\pi_Y(R_w)}\}_{w\in Y^*}$ (resp. $\{\zeta_{\shuffle}(R_{\pi_Y(w)})\}_{w\in X^*}$), with $\scal{Z^-_{\shuffle}}{x_1}=\scal{Z^-_{\shuffle}}{x_0}=0$ and $\scal{Z^-_{\gamma}}{y_1}=-1/2$:
\begin{eqnarray}\label{definition}
\Upsilon:=\sum_{w\in Y^*}\H_{\pi_Y(R_w)}w
&\mbox{and}&
\Lambda:=\sum_{w\in X^*}\Li_{R_{\pi_Y(w)}}w,\\
Z^-_{\gamma}:=\sum_{w\in Y^*}\gamma_{\pi_Y(R_w)}w
&\mbox{and}&
Z^-_{\shuffle}:=\sum_{w\in X^*}\zeta_{\shuffle}(R_{\pi_Y(w)})w.
\end{eqnarray}
Since the morphism $R_{\bullet}:({\C[x_0]}\langle{Y_0}\rangle,{\top},1_{Y_0^*})\longrightarrow(\C[x_0][x_1^*],\shuffle,1_{X^*})$ is bijective then for $\hat{\pi}_Y$ is the morphism of algebras defined by $\hat{\pi}_YS_l=\pi_YS_l$ and $\hat{\pi}_Y(x_0)=x_0$ ($l\in\Lyn X-\{x_0\}$) such that $\Li_{R_{\hat{\pi}_Yx_0}}(z)=\log(z)$ and then $\zeta(R_{\hat{\pi}_Yx_0})=0$, one has
\begin{eqnarray}
\Upsilon=((\H_{\bullet}\circ{\pi}_Y\circ R_{\bullet})\otimes\mathrm{Id}){{\mathcal D}_Y}&\mbox{and}&
\Lambda=((\Li_{\bullet}\circ R_{\bullet}\circ\hat{\pi}_Y)\otimes\mathrm{Id}){{\mathcal D}_X},\\
Z^-_{\gamma}=((\gamma_{\bullet}\circ{\pi}_Y\circ R_{\bullet})\otimes\mathrm{Id}){{\mathcal D}_Y}&\mbox{and}&
Z^-_{\shuffle}=((\zeta_{\shuffle}\circ R_{\bullet}\circ\hat{\pi}_Y)\otimes\mathrm{Id}){{\mathcal D}_X}.
\end{eqnarray}

\begin{theorem}[\cite{CM}]\label{fin}
\begin{enumerate}
\item Associating $l\in\Lyn Y$ with $(s_1,\ldots,s_r)\in\N_{\ge1}^r$, there is a unique $p\in\Z[t]$ of valuation $1$ and of degree $(l)+\abs{l}$ such that 
$$\begin{array}{@{}rcll@{}}
\check p(x_1^*)&=&R_l&\in(\Z[x_1^*],\shuffle,1_{X^*}),\cr
p(e^{-\log(1-z)})&=&\Li_{R_l}(z)&\in(\Z[e^{-\log(1-z)}],\times,1_{\Omega}),\cr
\hat p((n+\bullet)_n)&=&\H_{\pi_Y(R_l)}(n)&\in(\Q[(n+\bullet)_n],\times,1),\cr
\zeta(-s_1,\ldots,-s_r)=p(1)&=&\zeta_{\shuffle}(R_l)&\in(\Z,\times,1),\cr
\gamma_{-s_1,\ldots,-s_r}=\hat p(1)&=&\gamma_{\pi_Y(R_l)}&\in(\Q,\times,1).
\end{array}$$
        
\item $Z^-_{\gamma}$ and $\Upsilon$ (resp. $Z^-_{\shuffle}$ and $\Lambda$) are grouplike for $\Delta_{\stuffle}$ (resp. $\Delta_{\shuffle}$). Moreover,
\begin{eqnarray*}
Z^-_{\gamma}=\prod_{l\in\Lyn Y}^{\searrow}e^{\gamma_{\pi_Y(R_{\Sigma_l})}\Pi_l}    
&\text{and}&
Z^-_{\shuffle}=\prod_{l\in\Lyn X}^{\searrow}e^{\zeta_{\shuffle}(\pi_Y(S_l))P_l},\cr
\Upsilon=\prod_{l\in\Lyn Y}^{\searrow}e^{\H_{\pi_Y(R_{\Sigma_l})}\Pi_l}
&\text{and}&
\Lambda=\prod_{l\in\Lyn X}^{\searrow}e^{\Li_{R_{\pi_Y(S_l)}}P_l}.        
\end{eqnarray*}
        
\item Under the action of $\calG$, for any $g\in\calG$, there exists a substitution $\sigma_g$ and $C\in\ncs{\calL ie_{\C}}{X}$ such that $\Lambda(g(z))=\sigma_g(\Lambda(z))e^C$ and $\Lambda(z)\sim_0e^{x_0\log(z)}$.
\end{enumerate}
\end{theorem}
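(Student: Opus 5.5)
The statement has three items, and I would prove them in order, relying on the structure results already established. The common tool is the pair $\Lambda=((\Li_{\bullet}\circ R_{\bullet}\circ\hat\pi_Y)\otimes\mathrm{Id})\calD_X$ and $\Upsilon=((\H_{\bullet}\circ\pi_Y\circ R_{\bullet})\otimes\mathrm{Id})\calD_Y$, together with the analogous expressions for $Z^-_{\shuffle}$ and $Z^-_\gamma$ given just before the statement. These exhibit each series as the image of a diagonal series under the tensor product of a map with the identity.

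\textbf{Item 1.} For $l\in\Lyn Y$ associated with $(s_1,\ldots,s_r)$, Proposition~\ref{reg_anal}(1) gives a unique $p\in\Z[t]$ of valuation $1$ and degree $(l)+\abs l$ with $R_l=\check p(x_1^*)$. Applying $\Li_{\bullet}$ and using $\Li_{(x_1^*)^{\shuffle k}}(z)=(1-z)^{-k}=e^{-k\log(1-z)}$ turns $\check p(x_1^*)$ into $p(e^{-\log(1-z)})$, because $\Li_\bullet$ is a $\shuffle$-morphism (Theorem~\ref{structure}). Proposition~\ref{explicit}(1),(2) then supplies the harmonic-sum line $\hat p((n+\bullet)_n)$. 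The finite-part values $p(1)$ and $\hat p(1)$ come directly from Proposition~\ref{reg_anal}(1), where $\zeta(-s_1,\ldots,-s_r)$ and $\gamma_{-s_1,\ldots,-s_r}$ are defined as these regularized limits via $\zeta_{\shuffle}$ and $\gamma_\bullet$. Uniqueness of $p$ follows from the bijectivity in Proposition~\ref{explicit}(4).

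\textbf{Item 2.} The maps $\Li_\bullet\circ R_\bullet\circ\hat\pi_Y$ and $\zeta_{\shuffle}\circ R_\bullet\circ\hat\pi_Y$ are $\shuffle$-characters: $R_\bullet$ is a morphism for $\top\to\shuffle$ (Proposition~\ref{explicit}(3),(4)), and $\Li_\bullet$ and $\zeta_{\shuffle}$ are characters. Similarly, $\H_\bullet\circ\pi_Y\circ R_\bullet$ and $\gamma_\bullet\circ\pi_Y\circ R_\bullet$ should be $\stuffle$-characters. By Proposition~\ref{proposition2} and Remark~\ref{reetheorem}, the image of $\calD_X$ (resp. $\calD_Y$) under $(\chi\otimes\mathrm{Id})$ for a character $\chi$ is grouplike. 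The factorized forms then follow by applying $\chi\otimes\mathrm{Id}$ to the MRS factorization of Proposition~\ref{diagonal}, since $\chi\otimes\mathrm{Id}$ is a continuous algebra morphism for the product \eqref{tensor1}--\eqref{tensor2}. This is exactly the argument of Corollary~\ref{factorisations}, with local coordinates $\chi(S_l)$ and $\chi(\Sigma_l)$.

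\textbf{Main obstacle.} The hardest step is verifying that the composite maps really are characters for the right product. The issue is that $R_\bullet$ intertwines $\top$ and $\shuffle$, not $\stuffle$ and $\shuffle$. I would first try to reduce to the fact that $\H_{\pi_Y(Q)}$ is the coefficient sequence of $\Li_Q/(1-z)$, and deduce multiplicativity on the image $\Z[x_1^*]$ from Proposition~\ref{Indexation2}, where $\H$ of Kleene stars is multiplicative.

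\textbf{Item 3.} For the action of $\calG$, I would use that $\Lambda$ satisfies the same noncommutative differential equation $d\Lambda=(x_0\,dz/z+x_1\,dz/(1-z))\Lambda$ as $\L$. This holds because of the closure under $\theta_0,\theta_1$ and the $\calG$-stability of $\calC\{\Li_w\}$ in Theorem~\ref{structure}(2). For $g(z)=1-z$ or $1/z$, the transformed series solves the equation with letters substituted by $\sigma_g$. Two grouplike solutions differ by a constant grouplike factor on the right, which is $e^C$ with $C$ a Lie series by Ree's theorem (Proposition~\ref{proposition2}). Finally, the asymptotic $\Lambda(z)\sim_0e^{x_0\log z}$ follows from $\scal{\Lambda}{x_0}=\log z$ and the fact that the other local coordinates vanish at $0$.
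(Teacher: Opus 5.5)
The paper gives no proof of this theorem. It is quoted from \cite{CM} in the appendix, and the only support offered is the identities expressing $\Upsilon$, $\Lambda$, $Z^-_\gamma$, $Z^-_{\shuffle}$ as images of $\calD_Y$, $\calD_X$ under a map tensored with $\mathrm{Id}$, together with the MRS factorization (Proposition~\ref{diagonal}). Your item~1, which amounts to Propositions~\ref{explicit} and~\ref{reg_anal} specialized to Lyndon words, is in line with this, and so is your treatment of $\Lambda$ and $Z^-_{\shuffle}$ in item~2. Two steps, however, do not go through.

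\textbf{Item 2 on the $Y$ side.} You correctly identify that $\H_\bullet\circ\pi_Y\circ R_\bullet$ and $\gamma_\bullet\circ\pi_Y\circ R_\bullet$ must be $\stuffle$-characters, but the detour you propose cannot deliver this.
\begin{itemize}
\item $Q\mapsto\H_{\pi_Y(Q)}$ is not multiplicative on $(\Z[x_1^*],\shuffle)$: $\H_{\pi_Y(x_1^*)}(n)^2=(n+1)^2$, whereas $\H_{\pi_Y(x_1^*\shuffle x_1^*)}(n)=\H_{(2y_1)^*}(n)=\binom{n+2}{2}$.
\item $\pi_Y(R_u)\stuffle\pi_Y(R_v)$ is a different rational series from $\pi_Y(R_{u\stuffle v})$; the two agree only after applying $\H_\bullet$.
\end{itemize}
For $\Upsilon$ the fix is immediate and avoids $\Z[x_1^*]$ altogether. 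Proposition~\ref{explicit}(2) gives $\H_{\pi_Y(R_w)}=\H^-_w$, and $\H^-_\bullet$ is a $\stuffle$-morphism by Theorem~\ref{structure}(3). Hence $\Upsilon=\sum_{w\in Y^*}\H^-_w w$ is grouplike and the MRS argument applies.

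For $Z^-_\gamma$ there is no such fix, because the required multiplicativity actually fails under the conventions of item~1.
\begin{itemize}
\item One has $y_1\stuffle y_1=2y_1^2+y_2$. Since $(1+\cdots+n)^2=1^3+\cdots+n^3$, it follows that $\H^-_{2y_1^2+y_2}=(\H^-_{y_1})^2=\H^-_{y_3}$.
\item Hence $\Li^-_{2y_1^2+y_2}=\Li^-_{y_3}$, and $R_{2y_1^2+y_2}=R_{y_3}$ by Proposition~\ref{explicit}(4).
\item From $\Li_{-3}(z)=z(1+4z+z^2)/(1-z)^4$ one reads $p_{y_3}(t)=6t^4-12t^3+7t^2-t$. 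From $\Li_{-1}(z)=z/(1-z)^2$ one reads $p_{y_1}(t)=t^2-t$.
\item Item~1 then gives $\scal{Z^-_\gamma}{y_1\stuffle y_1}=\hat p_{y_3}(1)=3/4$, whereas $\scal{Z^-_\gamma}{y_1}^2=\hat p_{y_1}(1)^2=1/4$.
\end{itemize}
So $w\mapsto\gamma_{\pi_Y(R_w)}$ is not a $\stuffle$-character, and by Proposition~\ref{proposition2} $Z^-_\gamma$ is not grouplike for $\Delta_{\stuffle}$. No argument of your shape can establish this part; as written, it is inconsistent with item~1.

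\textbf{Item 3.} Your premise that $\Lambda$ satisfies $d\Lambda=(x_0\,dz/z+x_1\,dz/(1-z))\Lambda$ is false.
\begin{itemize}
\item The $x_1$-coefficient of $\Lambda$ is $\Li_{R_{y_1}}(z)=\Li_{-1}(z)=z/(1-z)^2$, while that equation forces $d\scal{\Lambda}{x_1}=dz/(1-z)$.
\item The coefficients of $\Lambda$ are the $\Li^-_w$, which are polynomials in $(1-z)^{-1}$ with $\log z$ entering only through $x_0$; they are not the $\Li_w$.
\item The closure of $\calC\{\Li_w\}_{w\in X^*}$ under $\theta_0$, $\theta_1$ and $\calG$ is a property of a function algebra. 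It does not place $\Lambda$ in a linear differential equation with constant letters.
\end{itemize}
Consequently the ``two grouplike solutions differ by a constant right factor'' argument has nothing to act on, and the existence of $\sigma_g$ and $C$ remains unproved. It would have to come from the explicit action of $g$ on the local coordinates $\Li_{R_{\pi_Y(S_l)}}$.

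Your argument for $\Lambda(z)\sim_0e^{x_0\log z}$ is fine: $e^{x_0\log z}$ is the rightmost factor of the decreasing product, and the other local coordinates are $O(z)$.
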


\begin{thebibliography}{9}
\bibitem{abe}{E. Abe}.--
\textit{Hopf Algebras}, Cambridge Tracts in Mathematics, 2004.

\bibitem{berstel}{J.~Berstel, C.~Reutenauer}.--
\textit{Rational series and their languages}, Spr.-Ver., 1988.
%
\bibitem{B_Alg1}{N. Bourbaki}.--
\textit{Algebra I-III}, Springer-Verlag Berlin and Heidelberg GmbH \& Co. K; (2nd printing 1989)
%
\bibitem{Lie7}{N. Bourbaki}.--
\textit{Groupes et alg\`ebres de Lie, Ch 7-8}, N. Bourbaki et Springer-Verlag Berlin Heidelberg 2006
%
\bibitem{SLC74}{V.C. Bui, G.H.E. Duchamp, V. Hoang Ngoc Minh, Q.H. Ngo and C. Tollu}.--
\textit{(Pure) Transcendence Bases in $\varphi$-Deformed Shuffle Bialgebras},
74\`eme S\'em. Lotharingien de Comb., Haus Sch\"onenberg, Ellwangen (2018).
%
\bibitem{PMB}{V.C. Bui, V. Hoang Ngoc Minh, Q.H. Ngo and V. Nguyen Dinh}.--
\textit{Families of eulerian functions involved in regularization of divergent polyzetas}, Publications Math\'ematiques de Besan\c{c}on, pp. 5-28 (2023).
%
\bibitem{QTS12}{V.C. Bui, V. Hoang Ngoc Minh, Q.H. Ngo and V. Nguyen Dinh}.--
\textit{On the solutions of universal differential equations},
Journal of Physics: Conference Series 2667 (2023).
%
\bibitem{Cartier2}{P. Cartier}.--
\textit{A primer of Hopf algebras},
Frontiers in Number Theory, Physics, and Geometry II,
Springer, Berlin, Heidelberg, pp. 537-615 (2007).
%
\bibitem{CartierPatras}{P. Cartier and F. Patras}.--
\textit{Classical Hopf Algebras and Their Applications}, Algebra and Applications
Springer, (2021).
%
\bibitem{chomskyschutz}{N. Chomsky and M.P. Sch\"utzenberger}.--
\textit{The algebraic theory of context-free langage in computer programing and formal systems},
North Holland Publishing Company, Amsterda (1963).
%
\bibitem{chen}{K.T. Chen}.--
\textit{ Integration of paths-a faithful representation of paths by noncommutative formal power series}, Trans. Amer. Math. Soc. 89 (1958), 395-407.
%
\bibitem{Daresbury}{C.~Costermans and V.~Hoang Ngoc Minh}.--
\textit{Some Results \`a l'Abel Obtained by Use of Techniques \`a la Hopf},
Workshop on Global Integrability of Field Theories and Applications, Daresbury (UK), 1-3, November 2006.
%
\bibitem{JSC}{C. Costermans and V.~Hoang Ngoc Minh}.--
\textit{Noncommutative algebra, multiple harmonic sums and applications in discrete probability}, J. of Sym. Comp. (2009), 801-817.
%
\bibitem{dixmier}{J. Dixmier}.--
\textit{Enveloping algebras}, North-Holland Publishing Company (1977)
%
\bibitem{Linz}{M. Deneufch\^atel, G.H.E. Duchamp, Hoang Ngoc Minh, A.I. Solomon}.--
\textit{Independence of hyperlogarithms over function fields via algebraic combinatorics},
in Lecture Notes in Computer Science (2011), Volume 6742/2011, 127-139.
%
\bibitem{DR}{G. Duchamp, C. Reutenauer}.--
\textit{Un crit\`ere de rationalit\'e provenant de la g\'eom\'etrie non-commutative},
Inventiones Mathematicae, 128, 613-622, (1997)
%
\bibitem{DFLL}{G. Duchamp, M. Flouret, E. Laugerotte, J. G. Luque}.--
\textit{Direct and dual laws for automata with multiplicities}, Theoretical Computer Science, 267, 105-120, (2001)
%
\bibitem{SDSC}{G. H. E. Duchamp and C. Tollu}.--
\textit{Sweedler's duals and Sch\"utzenberger's calculus},
In Combinatorics and Physics, p. 67-78, Amer. Math. Soc. (Contemporary Math., vol.~539), 2011.
%
\bibitem{Ngo}{G.H.E. Duchamp, V. Hoang Ngoc Minh, Q.H. Ngo}.--
\textit{Harmonic sums and polylogarithms at negative multi-indices},
J. of Sym. Comp., { 83 (2017), 166-186.}
%
\bibitem{ACA}{G. Duchamp, V. Hoang Ngoc Minh, Q.H. Ngo, K. Penson, P. Simonnet}.--
\textit{Mathematical renormalization in quantum electrodynamics via noncommutative generating series},
in Applications of Computer Algebra, Springer Proceedings in Math. and Stat., pp. 59-100 (2017).
%
\bibitem{Ngo2}{G.H.E. Duchamp, V. Hoang Ngoc Minh, Q.H. Ngo}.--
\textit{Kleene stars of the plane, polylogarithms and symmetries},
Theoretical Computer Science, 800, p. 52-72, 2019.
%
\bibitem{DGM}{G. Duchamp, D. Grinberg, V. Hoang Ngoc Minh}.--
\textit{Bialgebraic generalizations of linear independence of characters}, In preparation.
%
\bibitem{eilenberg}{S.~Eilenberg}.--
\textit{Automata, Langages and Machines}, Academic press, 1976.
%
\bibitem{siblings}{Enjalbert, Duchamp, Hoang Ngoc Minh, Tollu}--
\textit{The contrivances of shuffle products and their siblings},
Discrete Mathematics 340(9): 2286-2300 (2017).
%
\bibitem{fliess-these}{M.~Fliess}.--
\textit{Sur certaines familles de s\'eries formelles},
Th\`ese d'Etat, Univ. Paris VII, (1972).
%
\bibitem{fliess1}{M.~Fliess}.--
\textit{Fonctionnelles causales non lin\'eaires et ind\'e\-ter\-mi\-n\'ees non commutatives},
Bull. Soc. Math. France, N$^{\circ}$109, 1981, pp. 3-40.
%
\bibitem{fliess2}{M.~Fliess}.--
\textit{R\'ealisation locale des sys\-t\`e\-mes non li\-n\'e\-aires, al\-g\`e\-bres de Lie fil\-tr\'ees transitives et s\'e\-ries g\'en\'e\-ratrices}, Invent. Math., t 71, 1983, pp. 521-537.
%
\bibitem{Hain}{R. Hain}.--
\textit{Iterated integrals and mixed Hodge structures on homotopy groups}, Lecture Notes
in Math., 1246, Springer, Berlin, 1987, 7583.
%
\bibitem{hoangjacoboussous}{V. Hoang Ngoc Minh, G.~Jacob, N.~Oussous}.--
\textit{Input/Output Behaviour of Nonlinear Control Systems~: Rational Approximations, Nilpotent structural Approximations},
in Analysis of controlled Dynamical Systems,
Progress in Sys.s and Cont. Th., Birkh\"auser, 1991, pp. 253-262.
%
\bibitem{FPSAC96}{V. Hoang Ngoc Minh, G. Jacob}.--
\textit{Symbolic Integration of meromorphic differential equation via Dirichlet functions}, Disc. Math. 210, pp. 87-116, 2000.
%
\bibitem{FPSAC98}{V. Hoang Ngoc Minh, M. Petitot, J. Van der Hoeven}.--
\textit{Polylogarithms and Shuffle Algebra},
\textit{Proceedings of FPSAC'98}, 1998.
%
\bibitem{orlando}{V. Hoang Ngoc Minh}.--
\textit{Differential Galois groups and non commutative generating series of polylogarithms}, in ``Automata, Combinatorics and Geometry'', 7th World Multi-conference on Systemics, Cybernetics and Informatics, Florida, USA, Juillet 2003.
%
\bibitem{galoisdiff}{V. Hoang Ngoc Minh}.--
\textit{Shuffle algebra and differential Galois group of colored polylogarithms},
Nuclear Physics B (Proc. Suppl.), 135 (2004), pp. 220-224.
%
\bibitem{livre}{V. Hoang Ngoc Minh}.--
\textit{Calcul symbolique non commutatif}, Presse Acad\'emique Francophone, Saar\-br\"uc\-ken (2014).
%
\bibitem{VJM}{V. Hoang Ngoc Minh}.--
\textit{Structure of polyzetas and Lyndon words}, 
Vietnamese Math. J. (2013),  41, Issue 4, 409-450.
%
\bibitem{CM}{V. Hoang Ngoc Minh}.--
\textit{On the solutions of universal differential equation with three singularit}, in Confluentes Mathematici, Tome 11 (2019) no. 2, p. 25-64.

\bibitem{tokyo}{V. Hoang Ngoc Minh}.--
\textit{Algebraic (Non) Relations Among Polyzetas},
{SCSS 2024: 10th Int. Symp. on Sym. Comp. in Soft. Sci., Aug. 28–30, 2024, Tokyo, Japan.}

\bibitem{Hochschild}{G.P. Hochschild}.--
\textit{Basic Theory of Algebraic Groups and Lie Algebras}. Graduate Texts in Mathematics, vol 75. Springer, New York, NY (1981). 
%
\bibitem{jacob-these}{G.~Jacob}.--
\textit{Repr\'esentation et substitutions matricielles dans la th\'eorie alg\'ebrique des transductions}, Th\`ese d'Etat, Univ. Paris VII (1975).
%
\bibitem{jacob}{G. Jacob}.--
{\it R\'ealisation des syst\`emes r\'eguliers (ou bilin\'eaires)
et s\'eries g\'en\'eratrices non commutatives},
dans ``Outils et mod\`eles math\'ematiques pour l'automatique,
l'analyse de syst\`emes et le traitement du signal", CNRS-RCP 567 (1980).
%
\bibitem{jacobreutenauer}{G.~Jacob, C. Reutenauer (Eds)}.--
Special issue, Theoretical Computer Science 79 (1) (1991)
%
\bibitem{jacobIMACS}{G.~Jacob (Ed)}.--
Special issue, Math. and Comp. in Simulation 42(s 4–6):639 · November 1996
%
\bibitem{kleene}{S.C.~Kleene}.--
\textit{Representation of Events in Nerve Nets and Finite Automata},
Automata studies, Princeton, pp. 3--40 (1956).  
%
\bibitem{kuich}{W.~Kuich}.--
\textit{An Algebraic Characterization of Some Principal Regulated Rational Cones},
J. Comp. Syst. Sc., vol 25, 1982, pp. 377-401.

\bibitem{kuichsalomaa}{W.~Kuich and A.~Salomaa}.--
\textit{Semiring, Automata, Languages}, Springer-Verlag, 2012.
%
\bibitem{lothaire}{M.~Lothaire}.--
\textit{Combinatorics on Words}, Encyclopedia of Mathematics and its Applications, Addison-Wesley, 1983.  
%
\bibitem{MilnorMoore}{J. W. Milnor, J. C. Moore}.--
\textit{On the structure of Hopf algebras}, Ann. of Math. (2) 81 (1965).
%
\bibitem{nivat}{M. Nivat}.--
\textit{Transductions des langages de Chomsky}, Ann. Ins. Fourier (1968) pp. 448-483.
%
\bibitem{reutenauer-these}{C. Reutenauer}.--
\textit{S\'eries rationnelles et Alg\`ebres syntaxiques}, Th\`ese d'Etat, Univ. Paris VII (1980).
%
\bibitem{reutenauerrealisation}{C.~Reutenauer}.--
\textit{The local realisation of generating series of finite Lie rank}.
Algebraic and Geometric Methods In Nonlinear Control Theory. 33-43, (1988).
%
\bibitem{reutenauer}{C. Reutenauer}.--
\textit{Free Lie Algebras}, London Math. Soc. Monographs (1993).
%
\bibitem{SalomaaSoittola}{A. Salomaa and M. Soittola}.--
\textit{Automata-Theoretic Aspects of formal power series}, Spriger-Verlag (1978).
%
\bibitem{schutz2}{M.P. Sch\"utzenberger}.--
\textit{Un probl\`eme de la th\'eorie des automates},
S\'eminaire Dubreil-Pisot, 13th year (1959-1960), no 3, I.H.P., Paris (1960).
%
\bibitem{schutz3}{M.P. Sch\"utzenberger}.--
\textit{On the Definition ofFamily of Automata}, Infor. Contr., Vol 4, pp. 245--270 (1961).
%
\bibitem{schutz4}{M.P. Sch\"utzenberger}.--
\textit{Finite Counting Automata}, Infor. Contr., Vol 4, pp. 91--107 (1962).
%
\bibitem{serre}{J.P. Serre}.--
\textit{Lie Algebras and Lie Groups},
1964 Lectures given at Harvard University, Lecture Notes in Mathematics, vol. 1500, Springer.
%
\bibitem{stanley}{R. Stanley}.--
\textit{Enumerative Combinatorics},
Cambridge University Press, Vol. 1 \& 2 (2011).
%
\bibitem{Sussmann}{H.~Sussmann}.--
\textit{A product expansion of the Chen series},
in Theory and Applications of Nonlinear Control , C.I. Byrnes and A. Lindquist (editors), 1986.
%
\bibitem{viennotgerard}{G.~Viennot}.--
\textit{Alg\`ebres de Lie libres et mono\"\i des libres},
Lecture Notes in Mathematics, Springer-Verlag, 691, 1978.Systems
%
\bibitem{zygmund}{A.~Zygmund}.-- \textit{Trigonometric series}, Cambridge University Press (2002).
\end{thebibliography}
\end{document}